\numberwithin{equation}{section}
    \def\norm#1{\left\|#1\right\|}
    \def\pd#1#2  {{\frac{\partial #1}{\partial #2}}}
    \def\DOT{\!\cdot\!}
    \def\hf{{\nicefrac{1}{2}}}
    \def\1{\mbox{\boldmath $1$}}
    \def\f{\mbox{\boldmath $f$}}
    \def\g{\mbox{\boldmath $g$}}
    \def\n{\mbox{\boldmath $n$}}
    \def\u{\mbox{\boldmath $u$}}
    \def\v{\mbox{\boldmath $v$}}
    \def\P{\mbox{$\mathsf{P}_h$}}
    \newcommand{\nrm}[1]{\left\| #1 \right\|}
	\newtheorem{thm}{Theorem}[section]
	\newtheorem{lem}[thm]{Lemma}
	\newtheorem{rem}[thm]{Remark}
\begin{document}
\title{A Second Order Energy Stable Scheme 
	for the Cahn-Hilliard-Hele-Shaw Equations}
	
\author{
Wenbin Chen\thanks{School of Mathematics; Fudan University, 
Shanghai, China 200433 ({\tt wbchen@fudan.edu.cn})}
\and
Wenqiang Feng\thanks{Mathematics Department; University 
of Tennessee; Knoxville, TN 37996, USA 
({\tt wfeng1@utk.edu})}
\and
Yuan Liu\thanks{School of Mathematics; Fudan University, 
Shanghai, China 200433 ({\tt 12110180072@fudan.edu.cn})} 
\and
Cheng Wang\thanks{Mathematics Department; University of 
Massachusetts; North Dartmouth, MA 02747, USA ({\tt corresponding author: cwang1@umassd.edu})}
\and
Steven M. Wise\thanks{Mathematics Department; University 
of Tennessee; Knoxville, TN 37996, USA 
({\tt swise1@utk.edu})}
  }

\maketitle
\begin{abstract}
We present a second-order-in-time finite difference scheme for the Cahn-Hilliard-Hele-Shaw equations. This numerical method is uniquely solvable and unconditionally energy stable. At each time step, this scheme leads to a system of nonlinear equations that can be efficiently solved by a nonlinear multigrid solver. Owing to the energy stability, we derive an $\ell^2 (0,T; H_h^3)$ stability of the numerical scheme.  To overcome the  difficulty associated with the convection term $\nabla \cdot (\phi \u)$, we perform an $\ell^\infty (0,T; H_h^1)$ error estimate instead of the classical $\ell^\infty (0,T; \ell^2)$ one to obtain the optimal rate convergence analysis. In addition, various numerical simulations are carried out, which demonstrate the accuracy and efficiency of the proposed numerical scheme. 
\end{abstract}

{\bf Keywords:}
Cahn-Hilliard-Hele-Shaw, Darcy's law, convex splitting, finite difference method,
unconditional energy stability, Nonlinear Multigrid

\section{Introduction}
The Cahn-Hilliard-Hele-Shaw (CHHS) diffuse interface model has attracted a lot of attention  because it describes two phase flows in a simple way \cite{lee2002modeling2,lee2002modeling1}. It has been used to model spinodal decomposition
of a binary fluid in a Hele-Shaw cell \cite{han2014decoupled}, tumor growth and cell sorting \cite{frieboes2010three,wise2008three}, and two
phase flows in porous media \cite{collins2013efficient}. It describes the process of the phase separation of a viscous, binary fluid into domains. In this model, the Cahn-Hilliard (CH) energy of a binary  fluid with a constant 
 mass density is given by \cite{cahn1958free}: 
 \begin{equation}\label{energy-CH}
 	E(\phi) = \int_\Omega\left\{\frac{1}{4}  \phi^4   
 	- \frac12 \phi^2 
 	+ \frac{\varepsilon^2}{2} \Bigl| \nabla \phi \Bigr|^2 \right\}d{\bf x} , 
 \end{equation} 
 where $\Omega\subset \mathbb{R}^d$ ($d = 2$ or 3), $\phi:\Omega\rightarrow \mathbb{R}$ is the concentration field, and $\varepsilon$ is a constant. The phase equilibria are represented by  the 
 pure fluids $\phi = \pm1$. For simplicity, we assume that 
 $\Omega = (0,L_x) \times (0,L_y) \times(0,L_z)$ and that 
 $\partial_{n} \phi =0$ on $\partial \Omega$, 
 the latter condition representing local thermodynamic equilibrium on the boundary. 
 The dynamic equations of CHHS model  \cite{lee2002modeling2,lee2002modeling1} are given by 
 \begin{eqnarray} \label{equation-CHHS-1} 
 	\partial_t \phi = \Delta \mu - \nabla \cdot ( \phi \u) ,  && \text{in}\quad  \Omega_T:= \Omega\times (0, T),
 	\\
 	\u = - \nabla p - \gamma \phi \nabla \mu   ,  && \text{in} \quad \Omega_T,  \label{equation-CHHS-2}
 	\\
 	\nabla \cdot \u  =  0 ,  && \text{in} \quad \Omega_T, \label{equation-CHHS-3}
 \end{eqnarray}
 where $\gamma > 0$ is related to surface tension and the chemical potential is defined as
 \begin{equation}\label{chem-pot}
 	\mu := \delta_\phi E =  \phi^3 - \phi   - \varepsilon^2 \Delta \phi ; 
 \end{equation}
 $\u$ is the advective velocity; and $p$ is the pressure. We assume no flux 
 boundary condition, namely $\u \cdot \n=0$ and $\partial_{n} \mu =0$, with $\n$ the unit normal vector on $\partial \Omega$: 
 \begin{eqnarray}\label{eqn:bdry}
\frac{\partial\phi}{\partial \n}=\frac{\partial\mu}{\partial \n}=\frac{\partial p}{\partial \n}=0 \quad \text{on} \quad \partial\Omega_T:=\partial\Omega\times (0,T],
\end{eqnarray}

 The system (\ref{equation-CHHS-1})-(\ref{equation-CHHS-3}) is mass conservative and 
 energy dissipative, and the dissipation rate is readily found to be 
 \begin{equation} \label{energy-disspative-PDE} 
 	d_t E = - \int_{\Omega} | \nabla \mu |^2 d{\bf x} 
 	- \frac{1}{\gamma} \int_{\Omega} | \u |^2 d{\bf x} \le 0 . 
 \end{equation}
 Another fundamental observation is that the energy (\ref{energy-CH}) 
 admits a splitting into purely convex and concave parts, i.e., 
 $E = E_c-E_e$: 
 \begin{equation}
 	E_c = \int_\Omega\left\{  \frac14 \phi^4   
 	+\frac{\varepsilon^2}{2} \Bigl| \nabla \phi \Bigr|^2 \right\} d{\bf x},\quad 
 	E_e = \int_\Omega \frac12 \phi^2 \ d{\bf x}  , 
 	\label{canonical-splitting-CH}
 \end{equation}
 where both $E_c$ and $E_e$ are convex. Based on this observation, a first order in time unconditionally energy stable finite difference for the CHHS equations was proposed in \cite{wise2010unconditionally},  and the detailed convergence analysis has become available in a more recent work \cite{chen2015efficient}.  Meanwhile, Feng and Wise presented finite element analysis for the system \eqref{equation-CHHS-1}-\eqref{equation-CHHS-3},  which arise as a diffuse interface model for the two phase Hele-Shaw flow in \cite{feng2012analysis}. Collins et al. proposed an unconditionally energy stable and uniquely solvable  finite difference scheme for the Cahn-Hilliard-Brinkman (CHB) system, which is comprised of a CH-type diffusion equation and a generalized Brinkman equation modeling fluid flow. The detailed convergence analysis for the first order convex splitting scheme to the Cahn-Hilliard-Stokes (CHS) equation was provided in \cite{diegel15a}. In \cite{guo2014efficient}, Guo et al. presented an energy stable fully-discrete local discontinuous Galerkin (LDG) method for the CHHS equations. And also, Han proposed and analyzed a decoupled unconditionally stable numerical scheme for the CHHS equations with variable viscosity in \cite{han2014decoupled}.   
 
Most of the existing schemes are of first oder accuracy in time. In this paper, we propose and analyze a second order convex splitting scheme for the system (\ref{equation-CHHS-1})-(\ref{equation-CHHS-3}), which turns out to be uniquely solvable 
and unconditionally energy stable. A modified Crank-Nicholson approximation is applied to the nonlinear part of the chemical potential, an explicit Adams-Bashforth extrapolation is applied to the concave term, and an Adams-Moulton interpolation formula is applied to the highest order surface diffusion term. In more details, such an Adams-Moulton interpolation formula is applied at the time steps $t^{n+1}$ and $t^{n-1}$ (instead of the standard one at $t^{n+1}$ and $t^n$), so that the diffusion coefficient at $t^{n+1}$ dominates the others. This subtle fact will greatly facilitates the convergence analysis; see the related works for the pure CH flow: \cite{guo16} with the finite difference spatial approximation, \cite{diegel16} with the finite element version. In addition, a semi-implicit approximation is applied to the nonlinear convection term, with the phase variable treated via extrapolation and the velocity field implicitly determined by the Darcy law at the numerical level. A careful analysis reveals a rewritten form of the numerical scheme as the gradient of strictly convex functional, so that both the unique solvability and unconditional energy stability could be theoretically justified.   

Meanwhile, it is noted that an optimal rate convergence analysis for the second order scheme to the CHHS equation remains open. The key difficulty is associated with the high degree of nonlinearity of the convection term, $\nabla \cdot (\phi \u)$, with $\u$ the Helmholtz projection of $- \gamma \phi \nabla \mu$. And also, the Darcy law in the fluid equation has also posed a serious challenge in the numerical analysis, in comparison with the CHS \cite{diegel15a} or Cahn-Hilliard-Navier-Stokes (CHNS) model \cite{diegel16a}, in which a kinematic diffusion is available for the fluid. For the CHHS equation, even the highest order linear diffusion term could not directly control the error estimates for the nonlinear terms, due to the nonlinear convection. For the first order numerical scheme, the methodology to overcome such a difficulty was reported in a few recent works \cite{chen2015efficient, Liu16a}.  However, these analysis techniques could not be directly applied to the second order scheme. In this article, we present a detailed analysis to establish the full order convergence of the proposed numerical scheme, with second order accuracy in both time and space. In more details, a nonlinear energy estimate by taking an inner product with $\mu^{k+1/2}$, the numerical chemical potential at time instant $t^{k+1/2}$, gives an unconditional numerical stability. Moreover, a more careful analysis for the chemical potential gradient, in combination with the Sobolev inequalities at the discrete level, leads to an $\ell^2 (0,T; H_h^3)$ stability estimate of the numerical solution. On the other hand, a subtle observation indicates that the estimate for the nonlinear error associated with $\nabla \cdot (\phi \u)$ cannot be carried out in a standard way, due to a broken structure for this nonlinear error function. As a result, an $\ell^\infty (0,T; H_h^1)$ error estimate has to be performed, instead of the classical $\ell^\infty (0,T; \ell^2)$ one, since the error term associated with the nonlinear convection has a non-positive inner product with the appropriate error test function.  In addition, the $\ell^2 (0,T; H_h^3)$ bound of the numerical solution plays a key role in the nonlinear error estimate, which enables us to apply the discrete Gronwall inequality to obtain the desired convergence result. 

 The remainder of this paper is organized as follows. 
In Section \ref{sec:scheme}, we present the fully-discrete scheme for CHHS equations. 
The $\ell^2 (0,T; H_h^3)$ stability of the numerical scheme is further established in Section \ref{sec:h3stability}. 
In Section \ref{sec:convergence},  we present the optimal rate convergence analysis with the help $\ell^\infty (0,T; H_h^1)$ error estimate.
In Section \ref{sec:numerical}, we provide some numerical results to validate our theoretical analysis and demonstrate the effectiveness of the proposed fully discrete finite difference method. To solve the nonlinear equations at each time step, the nonlinear multigrid solver is applied. 
Finally, we offer our concluding remarks in Section \ref{sec:conclusion}.
\section{The fully discrete scheme and a-priori stabilities}\label{sec:scheme}
In this section, we propose a second order in time fully discrete scheme for the system (\ref{equation-CHHS-1})-(\ref{equation-CHHS-3}) with the discrete homogeneous Neumann boundary conditions (\ref{eqn:bdry}). For simplicity, we consider the cuboid $\Omega = (0, L_x)\times (0, L_y)\times(0, L_z)$, such that there are $N_x, N_y, N_z\in\mathbb{N}$, with $h = L_x/N_x = L_y/N_y = L_z/N_z$, for some $h > 0$. Let $s= \frac{T}{M} >0 $ for some $M\in\mathbb{N}$ , be the time step size and $t_m=ms$. We only consider the three-dimensional version of the fully discrete scheme for the CHHS system since an extension to the two-dimensional case is trivial. For convenience, some of the following notations are defined in Appendix~\ref{app:discrete}.  For each integer $m$, $1 \le m \le M-1$, given $(\phi^{m-1}, \phi^{m})\in\left[\mathcal{C}_\Omega\right]^2$, find the cell-centered grid functions $(\phi^{m+1}, \mu^{m+1/2}, p^{m+1/2})\in \left[\mathcal{C}_\Omega\right]^3$,  such that
               \begin{eqnarray}
		\label{2nd scheme-CHHS-reformulate-s1} 
		\frac{\phi^{m+1}  - \phi^m}{s} &=& \Delta_h  \mu^{m+1/2} 
		- \nabla_h \cdot (A_{h}\phi^{m+1/2}_* \u^{m+1/2}) ,  
		\\
		\mu^{m+1/2}  &=& \chi \left( \phi^{m+1} , \phi^m \right)
		-  \phi^{m+1/2}_* - \varepsilon^2  \Delta_h 
		\Bigl( \frac34 \phi^{m+1}  + \frac14 \phi^{m-1} \Bigr) ,   \label{2nd scheme-CHHS-reformulate-s2} 
		\\
		\u^{m+1/2} &=&  - \nabla_h p^{m+1/2} - \gamma A_h\phi^{m+1/2}_* \nabla_h \mu^{m+1/2}  ,  
		\label{2nd scheme-CHHS-reformulate-s3}
		\end{eqnarray}
with the  boundary conditions $\n\cdot \nabla_h\phi^{m+1} = \n\cdot \nabla_h\mu^{m+1/2} = \n\cdot \nabla_h p^{m+1/2} =0$ (see \eqref{eqn:nbc1}-\eqref{eqn:nbc3}) on $\partial \Omega$, where
\begin{equation}\label{2nd approximation-chi}
\phi^{m+1/2}_* \mathop{:=} \frac32 \phi^m - \frac12 \phi^{m-1} , \quad \chi \left( \varphi , \psi  \right) 
 \mathop{:=}  \frac14 \left(  \varphi^2 + \psi^2 \right) 
 \left( \varphi + \psi  \right), 
 \end{equation}
for any $(\varphi, \psi)\in\left[\mathcal{C}_\Omega\right]^2$. Note that the three component variables of the velocity vector $\u^{m+1/2} \in  \vec{\mathcal{E}}_{\Omega}$ are located at the staggered grid points. To facilitate the unique solvability analysis below, we could eliminate the velocity variable in the numerical scheme and rephrase it in terms of $(\phi^{m+1}, \mu^{m+1/2}, p^{m+1/2})\in \left[\mathcal{C}_\Omega\right]^3$. In more details, we introduce ${\cal M}(\phi) \mathop{:=} 1+ \gamma\phi^2$ and rewrite 
\eqref{2nd scheme-CHHS-reformulate-s1}-\eqref{2nd scheme-CHHS-reformulate-s3} as  
\begin{eqnarray}
\phi^{m+1}-\phi^{m} &=& s \nabla_h\cdot\left(\mathcal{M}(A_{h}\phi_*^{m+1/2}) \nabla_{h}\mu^{m+1/2}\right) +s\nabla_{h}\cdot\left(A_{h}\phi_*^{m+1/2} \nabla_{h}p^{m+1/2}\right) ,
\label{2nd scheme-CHHS-s1}\\
\mu^{m+1/2} &=& \chi \left( \phi^{m+1} , \phi^m \right)-  \phi^{m+1/2}_* - \varepsilon^2  \Delta_h \Bigl( \frac34 \phi^{m+1}  + \frac14 \phi^{m-1} \Bigr) ,
\label{2nd scheme-CHHS-s2}
\\
\Delta_{h}p^{m+1/2} &=& - \gamma \nabla_h\cdot\left(A_h\phi_*^{m+1/2} \nabla_h\mu^{m+1/2}\right) .
\label{2nd scheme-CHHS-s3}
	\end{eqnarray}
The symbol  $\mathcal{M}(A_{h}\phi_*^{m+1/2})\nabla_{h}\mu^{m+1/2}$ represents a discrete vector field.  For instance, the $y$-component at a generic $y$-face grid point is given as 
\[
\left[\mathcal{M}(A_{h}\phi_*^{m+1/2})\nabla_{h}\mu^{m+1/2}\right]^y_{i,j\pm\hf,k} = \mathcal{M}(A_{y}\phi^{m+1/2}_{*,i,j\pm\hf,k})D_y\mu^{m+1/2}_{i,j\pm\hf,k} .
\]
Hence, $\mathcal{M}(A_{h}\phi_*^{m+1/2})\nabla_{h}\mu^{m+1/2}\in\vec{\mathcal{E}}_\Omega$ and similarly for $A_{h}\phi_*^{m+1/2}\nabla_{h}p^{m+1/2}$ and $A_{h}\phi_*^{m+1/2}\nabla_{h}\mu^{m+1/2}$. The definitions of the discrete operators used above can be found in Appendix~\ref{sebsec:discrete operate} and are similar to those found in \cite{wise2010unconditionally}.
	
	
 We now define a fully discrete energy that is consistent with the continuous space energy \eqref{energy-CH} as $h\to 0$. In particular, the discrete energy $E_h: \mathcal{C}_\Omega\to \mathbb{R}$ is
\begin{eqnarray}\label{eqn:dis-energy}
E_h(\phi) := \frac{1}{4}\nrm{\phi}_4^4-\frac{1}{2} \nrm{\phi}_2^2 + \frac{\varepsilon^2}{2}\nrm{\nabla_h\phi}_2^2.
\end{eqnarray}
We also define an alternate numerical energy via
	\begin{equation}
	F_h(\phi,\psi) = E_h (\phi) +  \frac14  \nrm{ \phi  -  \psi }_2^2   
	+ \frac{1}{8}\varepsilon^2   \nrm{ \nabla_h ( \phi -  \psi ) }_2^2. 
	\label{alternate-energy}
	\end{equation}
We can not guarantee that the energy $E_h$ is non-increasing in time, but,  we can guarantee the dissipation of auxiliary energy $F_h$.

For our present and future use, we define the canonical grid projection operator $\P: C^0(\Omega)\to \mathcal{C}_\Omega$ via $[\P v]_{i,j,k}=v(\xi_i,\xi_j,\xi_k)$. Set $u_{h,s}:=\P u(\cdot, s)$.  Then $F_h(u_{h,0},u_{h,s})\to E_h(u(\cdot, t_0))$ as $h\to 0$ and $s\to 0$ for sufficiently regular $u$. The next theorem addresses the unique solvability and unconditional energy stability of the numerical solutions to the scheme \eqref{2nd scheme-CHHS-s1} -- \eqref{2nd scheme-CHHS-s3}:
	
\begin{thm}\label{thm:Wise2010}
Suppose that  $\left(\phi_{e},\mu_{e},\u_{e}\right)$ is the sufficiently regular exact solution to the CHHS system (\ref{equation-CHHS-1})-(\ref{equation-CHHS-3}). Take $\Phi_{i,j,k}^\ell = \P\phi_e (\cdot, t_\ell)$ and suppose that the initial profile $\phi^0:= \Phi^0$, $\phi^1:= \Phi^1\in \mathcal{C}_\Omega$ satisfies homogeneous Neumann boundary conditions $\n  \cdot  \nabla_h \phi^{0} = 0$ and $\n  \cdot  \nabla_h \phi^1 = 0$ on $\partial\Omega$. Given any $(\phi^{m-1}, \phi^m) \in\left[\mathcal{C}_\Omega\right]^2$, there is a unique solution $\phi^{m+1} \in \mathcal{C}_\Omega$ to the scheme \eqref{2nd scheme-CHHS-s1} -- \eqref{2nd scheme-CHHS-s3}. And also, the scheme \eqref{2nd scheme-CHHS-s1} -- \eqref{2nd scheme-CHHS-s3}, with starting values $\phi^{0}$ and $\phi^1$, is unconditionally energy stable with respect to \eqref{alternate-energy}, \emph{i.e.}, for any $s > 0$ and $h>0$, and any positive integer $1\le \ell\le M-1$,
\begin{equation}\label{ieq:energy dissipative}
F_h(\phi^{\ell+1},\phi^{\ell})  + s\sum^{\ell}_{m=1}\|\nabla_{h}\mu^{m+1/2}\|^{2}_{2}+\frac{s}{\gamma}\sum^{\ell}_{m=1}\|\textbf{u}^{m+1/2}\|^{2}_{2} 
\leq F_h(\phi^1,\phi^0) \le C_0 ,
\end{equation}
where $C_0>$ is a constant independent of $s$, $h$, and $\ell$, and $\u^{m+1/2} \in \vec{\mathcal{E}}_{\Omega}$ is given by \eqref{2nd scheme-CHHS-reformulate-s3}. 
\end{thm}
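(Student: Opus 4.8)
The plan is to establish the two claims separately: unique solvability by recasting the scheme as the first-order optimality condition of a strictly convex functional, and energy stability by the standard ``test with $\mu^{m+1/2}$'' argument. For \emph{unique solvability}, I would first eliminate the velocity and pressure from the reformulated scheme~\eqref{2nd scheme-CHHS-s1}--\eqref{2nd scheme-CHHS-s3}. With the known positive coefficient $a := A_h\phi^{m+1/2}_*$, equation~\eqref{2nd scheme-CHHS-s3} gives $p^{m+1/2} = -\gamma\Delta_h^{-1}\nabla_h\cdot(a\nabla_h\mu^{m+1/2})$ (normalized to zero mean) as a function of $\mu^{m+1/2}$, and substitution into~\eqref{2nd scheme-CHHS-s1} puts the scheme in the form $\phi^{m+1}-\phi^m = -s\,\mathcal{L}_a\mu^{m+1/2}$ with $\mathcal{L}_a\nu := -\nabla_h\cdot(\mathcal{M}(a)\nabla_h\nu) + \gamma\nabla_h\cdot(a\nabla_h\Delta_h^{-1}\nabla_h\cdot(a\nabla_h\nu))$. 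The crucial lemma is that $\mathcal{L}_a$ is self-adjoint and positive definite on cell-centered grid functions of zero mean: summation by parts gives $\langle\mathcal{L}_a\nu,\nu\rangle = \nrm{\nabla_h\nu}_2^2 + \gamma\nrm{a\nabla_h\nu}_2^2 - \gamma\nrm{\nabla_h\Delta_h^{-1}\nabla_h\cdot(a\nabla_h\nu)}_2^2$, and because $\vec w\mapsto\nabla_h\Delta_h^{-1}\nabla_h\cdot\vec w$ is the orthogonal projection onto discrete gradients (hence norm-nonexpansive), the last term is bounded by $\gamma\nrm{a\nabla_h\nu}_2^2$, so $\langle\mathcal{L}_a\nu,\nu\rangle\ge\nrm{\nabla_h\nu}_2^2$. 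Hence $\mathcal{L}_a^{-1}$ is well-defined on mean-zero functions; since $\phi^{m+1}-\phi^m$ is automatically mean zero and $\mathcal{L}_a$ annihilates constants, the elimination makes sense and mass is conserved.

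Combining this with~\eqref{2nd scheme-CHHS-s2}, the scheme becomes the optimality condition, over the affine set $\mathcal{A} := \{\phi\in\mathcal{C}_\Omega:\overline\phi=\overline{\phi^m}\}$, of the functional
\[
J[\phi] := \frac{1}{2s}\langle\mathcal{L}_a^{-1}(\phi-\phi^m),\phi-\phi^m\rangle + \langle g(\phi,\phi^m),1\rangle + \frac{3\varepsilon^2}{8}\nrm{\nabla_h\phi}_2^2 - \langle\phi^{m+1/2}_* + \tfrac{\varepsilon^2}{4}\Delta_h\phi^{m-1},\phi\rangle ,
\]
where $g(\phi,\psi) := \tfrac1{16}\phi^4 + \tfrac1{12}\psi\phi^3 + \tfrac18\psi^2\phi^2 + \tfrac14\psi^3\phi$ is the $\phi$-antiderivative of $\chi(\phi,\psi)$ in~\eqref{2nd approximation-chi}. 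Since $\partial_\phi^2 g(\phi,\psi) = \tfrac34\phi^2 + \tfrac12\psi\phi + \tfrac14\psi^2$ has positive leading coefficient and negative discriminant $-\tfrac12\psi^2$, the term $\langle g(\cdot,\phi^m),1\rangle$ is strictly convex; together with $\mathcal{L}_a^{-1}$ positive semidefinite and $\nrm{\nabla_h\cdot}_2^2$ convex, $J$ is strictly convex, and the quartic growth of $g$ makes it coercive on $\mathcal{A}$. Therefore $J$ has a unique minimizer over $\mathcal{A}$, which is the unique $\phi^{m+1}$; then $\mu^{m+1/2}$, $p^{m+1/2}$, $\u^{m+1/2}$ follow uniquely from~\eqref{2nd scheme-CHHS-reformulate-s2}--\eqref{2nd scheme-CHHS-reformulate-s3}, with the discrete Neumann conditions built into the operators.

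For \emph{energy stability}, take the discrete inner product of~\eqref{2nd scheme-CHHS-reformulate-s1} with $s\mu^{m+1/2}$. Summation by parts (all boundary terms vanishing by the no-flux/Neumann conditions), together with~\eqref{2nd scheme-CHHS-reformulate-s3} and $\nabla_h\cdot\u^{m+1/2}=0$, reduces the right-hand side to $-s\nrm{\nabla_h\mu^{m+1/2}}_2^2 - \tfrac s\gamma\nrm{\u^{m+1/2}}_2^2$ exactly (the cross term $\langle\u^{m+1/2},\nabla_h p^{m+1/2}\rangle$ vanishing by incompressibility). On the left I would insert~\eqref{2nd scheme-CHHS-reformulate-s2} and bound three pieces: (i) the exact identity $\langle\phi^{m+1}-\phi^m,\chi(\phi^{m+1},\phi^m)\rangle = \tfrac14(\nrm{\phi^{m+1}}_4^4-\nrm{\phi^m}_4^4)$, which is the whole point of the modified Crank--Nicolson form of $\chi$; (ii) writing $\phi^{m+1/2}_* = \phi^m + \tfrac12(\phi^m-\phi^{m-1})$, using $\langle x-y,y\rangle = \tfrac12(\nrm x_2^2-\nrm y_2^2-\nrm{x-y}_2^2)$ and one Young inequality, the bound $-\langle\phi^{m+1}-\phi^m,\phi^{m+1/2}_*\rangle \ge -\tfrac12(\nrm{\phi^{m+1}}_2^2-\nrm{\phi^m}_2^2) + \tfrac14(\nrm{\phi^{m+1}-\phi^m}_2^2-\nrm{\phi^m-\phi^{m-1}}_2^2)$; (iii) after summation by parts and the splitting $\tfrac34\phi^{m+1}+\tfrac14\phi^{m-1} = \phi^{m+1} - \tfrac14(\phi^{m+1}-\phi^m) - \tfrac14(\phi^m-\phi^{m-1})$ with one Young inequality, $-\varepsilon^2\langle\phi^{m+1}-\phi^m,\Delta_h(\tfrac34\phi^{m+1}+\tfrac14\phi^{m-1})\rangle \ge \tfrac{\varepsilon^2}2(\nrm{\nabla_h\phi^{m+1}}_2^2-\nrm{\nabla_h\phi^m}_2^2) + \tfrac{\varepsilon^2}8(\nrm{\nabla_h(\phi^{m+1}-\phi^m)}_2^2-\nrm{\nabla_h(\phi^m-\phi^{m-1})}_2^2)$. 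The sum of (i)--(iii) is exactly $F_h(\phi^{m+1},\phi^m)-F_h(\phi^m,\phi^{m-1})$ --- the coefficients $\tfrac14$ and $\tfrac18\varepsilon^2$ in~\eqref{alternate-energy} are tuned so that this telescoping closes. Thus $F_h(\phi^{m+1},\phi^m) + s\nrm{\nabla_h\mu^{m+1/2}}_2^2 + \tfrac s\gamma\nrm{\u^{m+1/2}}_2^2 \le F_h(\phi^m,\phi^{m-1})$, and summing over $m=1,\dots,\ell$ gives~\eqref{ieq:energy dissipative}; the bound $F_h(\phi^1,\phi^0)\le C_0$ holds because $\phi^0=\P\phi_e(\cdot,t_0)$ and $\phi^1=\P\phi_e(\cdot,t_1)$ with $\phi_e$ sufficiently regular, so $F_h(\phi^1,\phi^0)$ is bounded uniformly in $h$ and $s$.

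I expect the main obstacle to be the first structural step --- eliminating the Darcy velocity/pressure and verifying that the resulting operator $\mathcal{L}_a$ stays coercive. This hinges on the specific mobility $\mathcal{M}(\phi)=1+\gamma\phi^2$ combining with the norm-nonexpansiveness of the discrete Helmholtz projection, and it is exactly what turns the scheme into the gradient of a \emph{strictly} convex functional. Once that is secured, the convexity and coercivity of $J$ and the entire energy computation are routine summations by parts and Young inequalities, the only delicacy being the coefficient bookkeeping that produces $F_h$ from $E_h$.
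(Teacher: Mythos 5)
Your proposal is correct and follows essentially the same route as the paper: unique solvability via recasting the scheme as the Euler--Lagrange equation of a strictly convex, coercive functional over the mass-constrained set (your $J$ and $\mathcal{L}_a$ correspond to the paper's $G$ and $\mathcal{L}$, and you in fact supply the coercivity computation for the eliminated Darcy operator that the paper only cites from \cite{wise2010unconditionally}), and energy stability via testing with $\mu^{m+1/2}$, with the identical telescoping identities and Young-inequality coefficients $\tfrac14$ and $\tfrac{\varepsilon^2}{8}$ that produce $F_h$. No substantive differences or gaps.
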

	
\begin{proof}
The unique solvability proof follows from the convexity analysis, as presented in \cite{wise2010unconditionally} for the first order convex splitting scheme applied to the CHHS equation. 	We define a linear operator ${\cal L}$ as 
\begin{eqnarray} 
  {\cal L} (\mu) &:=& - s \nabla_h\cdot\left(\mathcal{M}(A_{h}\phi_*^{m+1/2}) \nabla_{h}\mu \right) - s\nabla_{h}\cdot\left(A_{h}\phi_*^{m+1/2} \nabla_{h}p_\mu \right) ,  \label{solvability-1} 
 \end{eqnarray} 
  in which
\begin{eqnarray}
  \Delta_{h} p_{\mu} &=& - \gamma \nabla_h\cdot\left(A_h\phi_*^{m+1/2} \nabla_h \mu \right) ,
  \label{solvability-2}  
\end{eqnarray} 
with $\phi_*^{m+1/2}$ a known function, and homogeneous Neumann boundary conditions for both $\mu$ and $p_{\mu}$. Following the arguments in \cite{wise2010unconditionally}, we are able to prove that ${\cal L}$ gives rise to a symmetric, coercive, and continuous bilinear form when the domain is restricted to $\mathring{\mathcal{C}}_{\Omega}:= \left\{ \mu \in \mathcal{C}_\Omega: ( \mu , {\bf 1} ) =0 \right\}$; the details are skipped for the sake of brevity and left to interested readers. 

Subsequently, an inner product on $\mathring{\mathcal{C}}_\Omega$ is introduced using ${\cal L}$: let $f_{\mu}$ and $f_{\nu} \in \mathring{\mathcal{C}}_\Omega$ and suppose $\mu, \nu \in \mathring{H}_h^1$ are the unique  solutions to ${\cal L} (\mu) = f_{\mu}$ and ${\cal L} (\nu) = f_{\nu}$. Then we define 
\begin{eqnarray} 
  \left( f_{\mu} ,  f_{\nu} \right)_{{\cal L}^{-1}} :
  = s ( \nabla_h \mu, \nabla_h \nu ) + \frac{s}{\gamma} \left( \nabla_h f_{\mu} + \gamma A_h\phi_*^{m+1/2} \nabla_h \mu ,  \nabla_h f_{\nu} + \gamma A_h\phi_*^{m+1/2} \nabla_h \nu \right) . 
  \label{solvability-3}  
\end{eqnarray}
It is straightforward to verify that 
\begin{eqnarray} 
  \left( f_{\mu} ,  f_{\nu} \right)_{{\cal L}^{-1}} =  \left( f_{\mu} ,  {\cal L}^{-1} f_{\nu} \right)  
  =  \left( {\cal L}^{-1} f_{\mu} , f_{\nu} \right) . 
  \label{solvability-4}  
\end{eqnarray} 
Next, we consider the following functional: 
\begin{eqnarray} 
  G (\phi) = \frac12 \left( \phi - \phi^m ,  \phi - \phi^m \right)_{{\cal L}^{-1}} 
  + F_c (\phi) - ( \phi , g_e (\phi^m, \phi^{m-1} ) ) , \label{solvability-5-1}   
\end{eqnarray}   
with
\begin{eqnarray}   
  &&F_c (\phi) = \frac{1}{16} \| \phi \|_4^4 + \frac{1}{12} ( \phi^m , \phi^3 ) + \frac18 ( (\phi^m)^2 , \phi^2 ) + \frac38 \varepsilon^2 \| \nabla_h \phi \|_2^2 ,  \label{solvability-5-2}
\\
  &&
  g_e (\phi^m, \phi^{m-1}) = - \frac14 (\phi^m)^3 + \phi^{m+1/2}_* + \frac14 \varepsilon^2 \Delta_h \phi^{m-1} .  \label{solvability-5-3}
\end{eqnarray} 
The convexity of $F_c$ follows from the convexity of $g_c (\phi): = \frac{1}{16} \phi^4 + \frac{1}{12} \phi^m \phi^3 + \frac18 (\phi^m)^2 \phi^2$ (in terms of $\phi$). And also, $( \cdot , \cdot )_{{\cal L}^{-1}}$ is an inner product. Therefore, we conclude that $G$ is convex. Moreover, $G$ is coercive over the set of admissible functions 
\begin{eqnarray} 
  {\cal A} = \left\{ \phi \in \mathcal{C}_\Omega: ( \phi - \phi^m , {\bf 1} ) =0 \right\} . 
  \label{solvability-6}
\end{eqnarray}  
Therefore it has a unique minimizer, and in particular the minimizer of $G$, which we denote as $\phi =  \phi^{m+1}$, satisfied the discrete equation 
\begin{eqnarray} 
  {\cal L}^{-1} ( \phi^{m+1} - \phi^m ) + \delta_\phi F_c (\phi^{m+1}) - g_e (\phi^m, \phi^{m-1} ) = C , 
  \label{solvability-7}
\end{eqnarray}   
in which $C$ is a constant and $\phi^{m+1}$ satisfies the homogeneous Neumann boundary condition. In other words, $\phi^{m+1}$ is a solution of 
\begin{eqnarray} 
  \phi^{m+1} - \phi^m + {\cal L} \left( \delta_\phi F_c (\phi^{m+1}) - g_e (\phi^m, \phi^{m-1} ) \right) =0 , 
  \label{solvability-8}
\end{eqnarray} 
which is equivalent to the numerical scheme \eqref{2nd scheme-CHHS-s1} -- \eqref{2nd scheme-CHHS-s3}. The proof of unique solvability is complete. 
			  
		For the energy stability analysis, we look at the numerical scheme in the original formulation \eqref{2nd scheme-CHHS-reformulate-s1}-\eqref{2nd scheme-CHHS-reformulate-s3}. 
		Taking an inner product with $\mu^{m+1/2}$ (given by (\ref{2nd scheme-CHHS-s2})) 
		by (\ref{2nd scheme-CHHS-reformulate-s1}) yields 
		\begin{equation} 
		\left( \phi^{m+1}  - \phi^m , \mu^{m+1/2} \right)  
		- s\left( \Delta_h \mu^{m+1/2} , \mu^{m+1/2} \right)
		+ s\left( \nabla_h \cdot (A_h\phi^{m+1/2}_* \u^{m+1/2}) , \mu^{m+1/2} \right) = 0 . 
		\label{energy-est-1}
		\end{equation}
		In more detail, the leading term has the expansion 
		\begin{eqnarray} 
		\left( \phi^{m+1}  - \phi^m , \mu^{m+1/2} \right)  
		&=& \left( \phi^{m+1}  - \phi^m ,  \chi \left( \phi^{m+1} , \phi^m \right) \right)
		-  \frac12 \left( \phi^{m+1}  - \phi^m , 3\phi^m -  \phi^{m-1}   \right)\nonumber \\
		&-&  \frac{1}{4}\varepsilon^2 \left(\phi^{m+1}- \phi^m ,  \Delta_h \Bigl( 3 \phi^{m+1}  + \phi^{m-1} \Bigr)  \right)
		:= I_{1}+I_{2}+\varepsilon^2I_{3}
		\label{energy-est-2}
		\end{eqnarray}     
		The estimate for $I_{1}$, a convex term, is straightforward: 
		\begin{eqnarray}  
		I_{1}&=& \frac14  \biggl(  \phi^{m+1} - \phi^n  , 
		\Bigl(  (\phi^{m+1})^2 + (\phi^m)^2 \Bigr) 
		\Bigl( \phi^{m+1} + \phi^m  \Bigr)   \biggr)   \nonumber 
		\\
		&=& \frac14  \biggl(  
		\Bigl( ( \phi^{m+1} )^2 - ( \phi^m )^2  \Bigr) , 
		\Bigl(  (\phi^{m+1})^2 + (\phi^m)^2 \Bigr)   \biggr)  
		=  \frac14  \Bigl( \nrm{ \phi^{m+1} }_4^4
		-  \nrm{ \phi^m }_4^4   \Bigr) .  
		\label{energy-est-3}
		\end{eqnarray}
		
		For the second term $I_2$ of (\ref{energy-est-2}), a concave term, we see that 
		\begin{eqnarray}\label{energy-est-5}
		I_2 &=& - \Bigl( \phi^{m+1} - \phi^m , \phi^m  \Bigr) 
		- \frac12 \Bigl( \phi^{m+1} - \phi^m , \phi^m  -  \phi^{m-1}  \Bigr)\nonumber\\
		&=&- \frac12 \Bigl( \nrm{\phi^{m+1}}_2^2  
		-  \nrm{\phi^m}_2^2  \Bigr) +  \frac12  \nrm{\phi^{m+1} - \phi^m}_2^2 
		- \frac12 \Bigl( \phi^{m+1} - \phi^m , \phi^m  -  \phi^{m-1}  \Bigr)\\
		&\geq&- \frac12 \Bigl( \nrm{\phi^{m+1}}_2^2  -  \nrm{\phi^m}_2^2  \Bigr)  
		+  \frac14  \nrm{\phi^{m+1} - \phi^m}_2^2 
		-    \frac14 \nrm{\phi^m  -  \phi^{m-1}}_2^2,\nonumber
		\end{eqnarray}
		where the Cauchy inequality was utilized in the last step. 

		The third term $I_3$ of \eqref{energy-est-2}, also a convex term, can be analyzed 
		with the help of summation by parts:  
		\begin{eqnarray}  
		I_3&=& \frac14 \left(  \nabla_h \Bigl( \phi^{m+1} - \phi^m \Bigr) , 
		\nabla_h \Bigl( 3 \phi^{m+1} + \phi^{m-1} \Bigr) \right)   \nonumber 
		\\ 
		&=& \frac12 \left(  
		\nabla_h \Bigl( \phi^{m+1} - \phi^m \Bigr) , \nabla_h \Bigl( \phi^{m+1} + \phi^m \Bigr) \right)  
		  +\frac14 \left(  
		\nabla_h \Bigl( \phi^{m+1} - \phi^m \Bigr) , 
		\nabla_h \Bigl( \phi^{m+1} - 2 \phi^m +  \phi^{m-1} \Bigr) \right)\nonumber  \\
		&:=&I_{3,1}+I_{3,2}. 
		\label{energy-est-4-1}
		\end{eqnarray}
		The evaluation of $I_{3,1}$ is straightforward: 
		\begin{equation} 
		I_{3,1}=\frac12 \left(  
		\nabla_h \Bigl( \phi^{m+1} - \phi^m \Bigr) , 
		\nabla_h \Bigl( \phi^{m+1} + \phi^m \Bigr) \right) 
		=  \frac12  \Bigl(  \nrm{\nabla_h \phi^{m+1}}_2^2 
		- \nrm{\nabla_h \phi^m}_2^2    \Bigr)  .
		\label{energy-est-4-2}
		\end{equation}
		The estimate of the $I_{3,2}$ can be carried out in the 
		following way: 
		\begin{eqnarray}
		I_{3,2}
		&=& \frac14\nrm{\nabla_h ( \phi^{m+1} - \phi^m)}_2^2  
		- \frac14\left(  \nabla_h ( \phi^{m+1} - \phi^m ) , 
		\nabla_h ( \phi^m -  \phi^{m-1} )  \right)    \nonumber 
		\\
		&\ge& \frac18 \Bigl( \nrm{ \nabla_h ( \phi^{n+1} - \phi^n ) }_2^2  
		-  \nrm{ \nabla_h ( \phi^m -  \phi^{m-1}) }_2^2  \Bigr)  ,  
		\label{energy-est-4-3}
		\end{eqnarray}
		in which the Cauchy inequality was applied in the last step. 
		Consequently, substituting (\ref{energy-est-4-2}) and (\ref{energy-est-4-3}) 
		into (\ref{energy-est-4-1})  yields 
		\begin{eqnarray} 
		I_3 &\ge&  \frac12  \Bigl(  \nrm{ \nabla_h \phi^{m+1} }_2^2 
		- \nrm{ \nabla_h \phi^m }_2^2    \Bigr) 
		+ \frac18 \Bigl( \nrm{ \nabla_h ( \phi^{m+1} - \phi^m ) }_2^2  
		-  \nrm{ \nabla_h ( \phi^m -  \phi^{m-1} ) }_2^2  \Bigr) . 
		\label{energy-est-4-4}
		\end{eqnarray}
		
		Finally, a combination of (\ref{energy-est-2}), (\ref{energy-est-3}), (\ref{energy-est-5}) and (\ref{energy-est-4-4}) 
		results in 
		\begin{eqnarray}
		\left( \phi^{m+1}  - \phi^m , \mu^{m+1/2} \right)   
		&\ge& E_h (\phi^{m+1}) - E_h (\phi^m) 
		+  \frac14 \Bigl(  \nrm{ \phi^{m+1} - \phi^m }_2^2 
		-    \nrm{ \phi^m  -  \phi^{m-1} }_2^2   \Bigr)     \nonumber 
		\\
		&&+ \frac{1}{8}\varepsilon^2 \Bigl( \nrm{ \nabla_h ( \phi^{m+1} - \phi^m ) }_2^2  
		-  \nrm{ \nabla_h ( \phi^m -  \phi^{m-1} ) }_2^2  \Bigr)  . 
		\label{energy-est-9}      
		\end{eqnarray}
		
		For the second term of (\ref{energy-est-1}), the boundary condition 
		$\n\cdot \nabla_h \mu^{m+1/2} \mid_{\partial \Omega}=0$  
		leads to the following summation by parts: 
		\begin{eqnarray} 
		\left( \Delta_h \mu^{m+1/2} , \mu^{m+1/2} \right) 
		=  - \left( \nabla_h \mu^{m+1/2} , \nabla_h \mu^{m+1/2} \right)
		= - \nrm{\nabla_h \mu^{m+1/2}}_2^2 .   
		\label{energy-est-10} 
		\end{eqnarray}

		The third term of (\ref{energy-est-1}) can be analyzed in a similar way. 
		By a reformulated form 
		of the second equation (\ref{2nd scheme-CHHS-reformulate-s3}) 
		\begin{equation} 
		A_h\phi^{m+1/2}_* \nabla_h \mu^{m+1/2} 
		= \frac{1}{\gamma} \Bigl( - \u^{m+1/2} - \nabla_h p^{m+1/2}  \Bigr)  , 
		\label{energy-est-11} 
		\end{equation} 
		we have
		\begin{eqnarray} 
		\left( \nabla_h \cdot (A_h\phi^{m+1/2}_* \u^{m+1/2}) , \mu^{m+1/2} \right)  
		&=& - \left( \u^{m+1/2},  A_h\phi^{m+1/2}_* \nabla_h \mu^{m+1/2} \right)  \nonumber 
		\\
		& = & \frac{1}{\gamma} \nrm{\u^{m+1/2}}_2^2 
		+  \left(\nabla_h\cdot \u^{m+1/2},    p^{m+1}  \right)\\ 
		&=&  \frac{1}{\gamma} \nrm{\u^{m+1/2}}_2^2 ,  \label{energy-est-12} 
		\end{eqnarray} 
		in which the last step comes from  $\nabla_h\cdot \u^{m+1/2}= 0$ 
		and $\u^{m+1/2} \cdot \n = 0\, \, \, \mbox{on} \, \, \,  \partial \Omega$.
		
		As a result, a substitution of (\ref{energy-est-9}), (\ref{energy-est-10}) 
		and (\ref{energy-est-12}) into (\ref{energy-est-1}) becomes 
		\begin{eqnarray} 
		&&E_h (\phi^{m+1}) - E_h (\phi^m) 
		+  \frac14 \Bigl(  \nrm{ \phi^{m+1} - \phi^m }^2 
		-    \nrm{ \phi^m  -  \phi^{m-1} }_2^2   \Bigr)   \nonumber 
        + \frac{1}{8}\varepsilon^2 \Bigl( \nrm{ \nabla_h ( \phi^{m+1} - \phi^m ) }_2^2  
		\\
		&&-  \nrm{ \nabla_h ( \phi^m -  \phi^{m-1} ) }_2^2  \Bigr)   
		+ s\nrm{\nabla_h \mu^{m+1/2}}_2^2 
		+ \frac{s}{\gamma} \nrm{\u^{m+1/2}}_2^2 \le 0 . 
		\label{energy-est-14} 
		\end{eqnarray}       

		By the definition of the alternate numerical energy, we arrive at 
		\begin{eqnarray} 
		F_h(\phi^{m+1},\phi^{m}) - F_h(\phi^m,\phi^{m-1})  + s\nrm{\nabla_h \mu^{m+1/2}}_2^2 
		+ \frac{s}{\gamma} \nrm{\u^{m+1/2}}_2^2 \le 0 . 
		\label{energy-est-15} 
		\end{eqnarray}      
		This in turn shows that the modified energy is non-increasing in time. 
		Summing over time for (\ref{energy-est-15}) yields 
		\begin{equation} 
		F_h(\phi^{\ell+1},\phi^{\ell})  + s\sum^{\ell}_{m=1}\|\nabla_{h}\mu^{m+1/2}\|^{2}_{2}+\frac{s}{\gamma}\sum^{\ell}_{m=1}\|\textbf{u}^{m+1/2}\|^{2}_{2} 
		\le F_h(\phi^1,\phi^0) \le C_0 , \quad \forall\ell \ge 1   .
		\label{energy-est-16} 
		\end{equation} 
		Then we obtain the unconditional energy stability for the second order 
		scheme (\ref{2nd scheme-CHHS-s1})-(\ref{2nd scheme-CHHS-s3}).
    \end{proof}

\begin{rem} 
  It is observed that data for two initial time steps, either $(\phi^0, \phi^1)$ or $(\phi^{-1}, \phi^0)$, are needed for \eqref{2nd scheme-CHHS-s1} -- \eqref{2nd scheme-CHHS-s3}, since ours is a two-step scheme. In this article, we take $\phi^0= \Phi^0$, $\phi^1= \Phi^1$ for simplicity of presentation. Other initialization choices, such as $\phi^{-1} = \phi^0$, or computing $\phi^1$ by a first order temporal scheme, could be taken. Moreover, the energy stability and the second order temporal convergence rate are also expected to be available for these initial data choices; see the related works for the Cahn-Hilliard model~\cite{diegel16, guo16}.
\end{rem}

    \section{$\ell^2 (0,T; H_h^3)$ stability of the numerical scheme} \label{sec:h3stability}
    
    The $\ell^\infty(0,T; H_h^1)$ bound of the numerical solution could be derived based on the weak energy stability (\ref{energy-est-16}). The following quadratic inequality is observed:  
    \begin{equation} 
      \frac18 \phi^4 - \frac12 \phi^2 \ge -\frac12 ,  \quad 
      \mbox{which in turn yields} \quad 
      \frac18 \| \phi \|_4^4 - \frac12 \| \phi \|_2^2 \ge - \frac12 | \Omega | ,  
      \label{H1 bound-1} 
   \end{equation} 
with the discrete $H_h^1$ norm introduced in \eqref{discrete norm-H1}. 
   Then we arrive at the following bound, for any $\phi \in {\mathcal{C}}_\Omega$:  
   \begin{eqnarray} 
     E_h (\phi) \ge \frac18 \| \phi \|_4^4 + \frac{\varepsilon^2}{2} \| \nabla_h \phi \|_2^2 - \frac12 | \Omega | 
     \ge \frac12 \| \phi \|_2^2 + \frac{\varepsilon^2}{2} \| \nabla_h \phi \|_2^2 - | \Omega | 
     \ge \frac12 \varepsilon^2 \| \phi \|_{H_h^1}^2 - | \Omega | . 
 \end{eqnarray}   
 Consequently, its combination with \eqref{energy-est-16} yields the following estimate:
    \begin{equation} 
       \frac12 \varepsilon^2 \nrm{\phi^{\ell+1}}_{H_h^1}^2 + s \sum_{m=1}^{\ell} \Bigl( \nrm{\nabla_h \mu^{m+1/2}}_2^2 
        + \frac{1}{\gamma} \nrm{\u^{m+1/2}}_2^2 \Bigr) \le C_0 + | \Omega | := C_1 ,  
        \label{leading stability} 
    \end{equation} 
so that a uniform in time bound for $\phi$ in $\ell^\infty (0,T; H_h^1)$ is available:  
    \begin{equation} 
    \nrm{\phi^m}_{H_h^1} \le C_2 := \varepsilon^{-1} \sqrt{2 C_1} , \quad 
    \mbox{for any} \, \, \, m  .  \label{H^1-stability} 
    \end{equation}

\begin{thm} \label{thm:stability L2-H3} 
Let $\phi^{m} \in \mathcal{C}_\Omega$ be the solution to the scheme \eqref{2nd scheme-CHHS-s1} -- \eqref{2nd scheme-CHHS-s3}, with sufficient regularity assumption for $\Phi^0$ and $\Phi^1$, then for any $1 \le \ell \le M-1$, we have 
     \begin{equation} 
     \frac1{16} \varepsilon^4 s \sum_{m=1}^{l}  \nrm{\nabla_h \Delta_h \phi^m}_2^2 
     \le C_{11} + C_{10} T,   \label{stab-L2-H3-0}  
     \end{equation} 
 where $C_{10}$ and $C_{11}$, given by (\ref{eqn:C10}) 
     and (\ref{eqn:C11}), respectively, only depend on $L_x$, $L_y$, $L_z$, 
     $\varepsilon$ and several Sobolev embedding constants, and are  independent of $h$, $s$ and final time $T$.     
\end{thm}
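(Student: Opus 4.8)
The plan is to extract the highest-order surface diffusion contribution $\varepsilon^2 \nabla_h \Delta_h \phi$ directly from the chemical-potential relation \eqref{2nd scheme-CHHS-s2} and then to absorb the resulting nonlinear gradient term back into the left-hand side. Applying the discrete gradient $\nabla_h$ to \eqref{2nd scheme-CHHS-s2} and rearranging gives, for each $1 \le m \le \ell$,
\[
\varepsilon^2 \nabla_h \Delta_h \Bigl( \tfrac34 \phi^{m+1} + \tfrac14 \phi^{m-1} \Bigr) = \nabla_h \chi(\phi^{m+1}, \phi^m) - \nabla_h \phi_*^{m+1/2} - \nabla_h \mu^{m+1/2} .
\]
Taking $\nrm{\cdot}_2^2$ of both sides, the right side is controlled by $3 \bigl( \nrm{\nabla_h \chi(\phi^{m+1},\phi^m)}_2^2 + \nrm{\nabla_h \phi_*^{m+1/2}}_2^2 + \nrm{\nabla_h \mu^{m+1/2}}_2^2 \bigr)$, while on the left I use the elementary inequality $\nrm{\tfrac34 a + \tfrac14 b}_2^2 \ge \tfrac38 \nrm{a}_2^2 - \tfrac18 \nrm{b}_2^2$ with $a = \nabla_h \Delta_h \phi^{m+1}$ and $b = \nabla_h \Delta_h \phi^{m-1}$. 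The dominance of the $\phi^{m+1}$ coefficient here is crucial, and is precisely the reason the Adams--Moulton stencil was taken at $t^{m+1}$ and $t^{m-1}$ rather than at $t^{m+1}$ and $t^m$. Multiplying by $s$ and summing over $1 \le m \le \ell$, the left side telescopes to a positive multiple (at least $\tfrac14 \varepsilon^4$) of $s \sum_{m=1}^{\ell} \nrm{\nabla_h \Delta_h \phi^m}_2^2$, minus boundary terms proportional to $\nrm{\nabla_h \Delta_h \Phi^0}_2^2$ and $\nrm{\nabla_h \Delta_h \Phi^1}_2^2$, which are bounded by the regularity assumptions on $\Phi^0$ and $\Phi^1$.

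It remains to estimate the three terms on the right, after multiplication by $s$ and summation in time. The $\mu$-term is bounded by a constant multiple of $C_1$ thanks to the energy estimate \eqref{leading stability}; the $\phi_*$-term is harmless, since $\nrm{\nabla_h \phi_*^{m+1/2}}_2 \le \tfrac32 \nrm{\nabla_h \phi^m}_2 + \tfrac12 \nrm{\nabla_h \phi^{m-1}}_2 \le 2 C_2$ by \eqref{H^1-stability}, so it contributes a term of order $T$. The nonlinear term $\nrm{\nabla_h \chi(\phi^{m+1},\phi^m)}_2^2$ is the crux. Since $\chi$ in \eqref{2nd approximation-chi} is a cubic polynomial, an elementary (discrete Leibniz/chain-rule) estimate gives $\nrm{\nabla_h \chi(\phi^{m+1},\phi^m)}_2 \le C \bigl( \nrm{\phi^{m+1}}_6^2 + \nrm{\phi^m}_6^2 \bigr) \bigl( \nrm{\nabla_h \phi^{m+1}}_6 + \nrm{\nabla_h \phi^m}_6 \bigr)$, where $\nrm{\cdot}_6$ is the discrete $L^6$ norm. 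Since no a priori $\ell^\infty$ bound on $\phi$ is available, I invoke the discrete Sobolev embedding $\nrm{\phi^j}_6 \le C \nrm{\phi^j}_{H_h^1} \le C C_2$ together with a discrete Gagliardo--Nirenberg inequality of the type $\nrm{\nabla_h \phi^j}_6 \le C \bigl( \nrm{\phi^j}_{H_h^1} + \nrm{\phi^j}_{H_h^1}^{\theta} \nrm{\nabla_h \Delta_h \phi^j}_2^{1-\theta} \bigr)$ for a suitable $\theta \in (0,1)$, combined with the uniform bound \eqref{H^1-stability}. This yields $\nrm{\nabla_h \chi(\phi^{m+1},\phi^m)}_2^2 \le C \bigl( 1 + \nrm{\nabla_h \Delta_h \phi^{m+1}}_2^{\beta} + \nrm{\nabla_h \Delta_h \phi^m}_2^{\beta} \bigr)$ with $\beta = 2(1-\theta) < 2$ and $C$ depending only on $\varepsilon$, $|\Omega|$, $C_2$ and the Sobolev constants.

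Finally, multiplying by $s$, summing over $1 \le m \le \ell$, and applying H\"older's inequality in time followed by Young's inequality with a small parameter, one obtains, for any $\delta > 0$,
\[
3\, s \sum_{m=1}^{\ell} \nrm{\nabla_h \chi(\phi^{m+1},\phi^m)}_2^2 \le C_\delta (1 + T) + \delta\, s \sum_{m=1}^{\ell+1} \nrm{\nabla_h \Delta_h \phi^m}_2^2 .
\]
Choosing $\delta$ small enough that $\delta < \tfrac14 \varepsilon^4$, the final sum is absorbed into the telescoped left-hand side obtained above, the initial terms $\nrm{\nabla_h \Delta_h \Phi^0}_2^2$ and $\nrm{\nabla_h \Delta_h \Phi^1}_2^2$ being collected into $C_{11}$ and the endpoint term $\nrm{\nabla_h \Delta_h \phi^{\ell+1}}_2^2$ retaining a favorable sign on the left. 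Collecting the surviving terms produces \eqref{stab-L2-H3-0}, with $C_{10}$ and $C_{11}$ depending only on $L_x$, $L_y$, $L_z$, $\varepsilon$, the relevant discrete Sobolev and Gagliardo--Nirenberg constants, and the norms of the initial data. I expect the treatment of the nonlinear term $\nrm{\nabla_h \chi}_2^2$ to be the main obstacle: it naively scales like $\nrm{\phi}_\infty^4 \nrm{\nabla_h \phi}_2^2$, with no available $\ell^\infty$ control, and must instead be bounded by an expression whose power of $\nrm{\nabla_h \Delta_h \phi}_2$ is strictly less than $2$ after summation in time, so that Young's inequality permits absorption; this is what makes the $\ell^\infty(0,T;H_h^1)$ bound \eqref{H^1-stability} indispensable. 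A secondary, purely bookkeeping difficulty is tracking the signs of the boundary terms near the endpoint $m = \ell$ in the telescoping sum.
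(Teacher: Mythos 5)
Your proposal is correct and follows essentially the same route as the paper's proof: isolate $\varepsilon^2 \nabla_h \Delta_h \bigl( \tfrac34 \phi^{m+1} + \tfrac14 \phi^{m-1} \bigr)$ from the chemical-potential relation, exploit the dominance of the $t^{m+1}$ coefficient via $\nrm{\tfrac34 a + \tfrac14 b}_2^2 \ge \tfrac38 \nrm{a}_2^2 - \tfrac18 \nrm{b}_2^2$, telescope in time against the energy-stability bound $s \sum_m \nrm{\nabla_h \mu^{m+1/2}}_2^2 \le C_1$, and reduce $\nrm{\nabla_h \chi}_2^2$ to a sub-quadratic power of $\nrm{\nabla_h \Delta_h \phi}_2$ that Young's inequality can absorb. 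The only cosmetic difference is in the nonlinear term, where the paper uses the discrete Gagliardo--Nirenberg bound $\nrm{\phi}_\infty \lesssim \nrm{\phi}_{H_h^1}^{3/4} \nrm{\nabla_h \Delta_h \phi}_2^{1/4} + \nrm{\phi}_{H_h^1}$ together with the uniform $H_h^1$ control of $\nabla_h \phi$, whereas you use an $L^6$-based H\"older split with a Gagliardo--Nirenberg estimate for $\nrm{\nabla_h \phi}_6$; both yield the same exponent (linear in $\nrm{\nabla_h \Delta_h \phi}_2$) and the same conclusion.
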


%
%
\begin{proof}
    We observe that 
    \begin{eqnarray} 
    &&\nrm{\nabla_h \mu^{m+1/2}} 
     =  \nrm{\nabla_h \Bigl(   
    	\varepsilon^2  \Delta_h ( \frac34 \phi^{m+1} + \frac14 \phi^{m-1} )  
    	- \chi \left( \phi^{m+1} , \phi^m \right)  + \phi^{m+1/2}_*  \Bigr) } _2   \nonumber 
    \\
    &  \ge & \biggl| 
    \varepsilon^2 \nrm{\nabla_h \Delta_h ( \frac34 \phi^{m+1} + \frac14 \phi^{m-1} )} _2
    - \nrm{ \nabla_h \Bigl(  \chi \left( \phi^{m+1} , \phi^m \right) 
    	- ( \frac32 \phi^m - \frac12 \phi^{m-1} ) \Bigr)} _2 \biggr| , 
    \label{triangle estimate} 
    \end{eqnarray} 
    in which a triangle inequality was applied in the last step. 
    Furthermore, motivated by the quadratic inequality $ |a-b|^2 \ge \frac12 a^2 - b^2 $, 
    we have
    \begin{eqnarray} 
    \nrm{\nabla_h \mu^{m+1/2}}^2 
    & \ge &  \frac{1}{2}\varepsilon^4\nrm{\nabla_h \Delta_h 
    	( \frac34 \phi^{m+1} + \frac14 \phi^{m-1} )}_2^2 
    - \nrm{ \nabla_h \Bigl(  \chi \left( \phi^{m+1} , \phi^m \right)
    	- ( \frac32 \phi^m - \frac12 \phi^{m-1} ) \Bigr)}_2^2  \nonumber 
    \\
    & \ge &  \frac{1}{32} \varepsilon^4 \nrm{\nabla_h \Delta_h 
    	( 3 \phi^{m+1} +\phi^{m-1} )}_2^2  
    - \Bigl(\frac12\nrm{ \nabla_h (3\phi^m -  \phi^{m-1} ) }_2 
    + \nrm{\nabla_h \chi \left( \phi^{m+1} , \phi^m \right) }_2  \Bigr)^2    \nonumber 
    \\
    & \ge & \frac{1}{32}\varepsilon^4  \nrm{\nabla_h \Delta_h 
    	( 3 \phi^{m+1} + \phi^{m-1} )}_2^2  
    - \Bigl( 2 C_2 
    + \nrm{\nabla_h \chi \left( \phi^{m+1} , \phi^m \right) }_2  \Bigr)^2    \nonumber  
    \\
    & \ge & \frac{1}{32}\varepsilon^4\nrm{\nabla_h \Delta_h 
    	( 3 \phi^{m+1} +  \phi^{m-1} )}_2^2  
    - 2 \Bigl(  4 C_2^2 
    + \nrm{\nabla_h \chi \left( \phi^{m+1} , \phi^m \right) }_2^2  \Bigr)  ,  
    \label{estimate-mu-2}
    \end{eqnarray} 
    in which the uniform in time estimate (\ref{H^1-stability}) was utilized in the third step  
    and another quadratic inequality $(a+b)^2 \le 2 (a^2 + b^2)$ 
    was used in the last step. 
    An application of the Cauchy inequality gives an estimate 
    of the leading term in (\ref{estimate-mu-2}): 
    \begin{eqnarray}  
    \nrm{\nabla_h \Delta_h ( 3\phi^{m+1} + \phi^{m-1} )}_2^2
    &  = & 9 \nrm{\nabla_h \Delta_h \phi^{m+1} }_2^2 
    +  6 \left( \nabla_h \Delta_h \phi^{m+1} ,  \nabla_h \Delta_h \phi^{m-1}  \right) 
    +  \nrm{\nabla_h \Delta_h \phi^{m-1} }_2^2  \nonumber 
    \\
    &  \ge & 6 \nrm{\nabla_h \Delta_h \phi^{m+1} }_2^2 
    - 2 \nrm{ \nabla_h \Delta_h \phi^{m-1}  }_2^2  . 
    \label{estimate-mu-3}
    \end{eqnarray}
    
    For the last term in (\ref{estimate-mu-2}),  
    by the definition of $\chi \left( \phi^{m+1} , \phi^m \right)$ 
    in (\ref{2nd approximation-chi}), a detailed expansion and a careful application of 
    discrete H\"older inequality shows that 
    \begin{eqnarray} 
    \nrm{\nabla_h \left( \chi \left( \phi^{m+1} , \phi^m \right) \right)}_2  
    &\le& C_3 (  \nrm{\phi^{m+1}}_{\infty}^2 + \nrm{\phi^m}_{\infty}^2 )(  \nrm{\nabla_h \phi^{m+1}}_2 + \nrm{\nabla_h \phi^m}_2  ) \\
    &\le& C_4 C_2 (  \nrm{\phi^{m+1}}_{\infty}^2 + \nrm{\phi^m}_{\infty}^2 ) , 
    \label{nonlinear-Holder}%
    \end{eqnarray} 
    in which the uniform in time estimate (\ref{H^1-stability}) was used again 
    in the last step. Moreover, the $\| \cdot \|_\infty$ bound of $\phi^k$ can be obtained with an application of a discrete Gagliardo-Nirenberg type inequality:     
    \begin{equation} 
    \nrm{\phi^k}_{\infty}   
    \le  C_5  \left( \nrm{ \phi^k}_{H_h^1}^{\frac34}  \cdot  
    \nrm{\nabla_h \Delta_h \phi^k}_2^{\frac14} + \nrm{\phi^k}_{H_h^1} \right) 
    \le  C_6  C_2^{\frac34}  
    \nrm{\nabla_h \Delta_h \phi^k}_2^{\frac14}  + C_6 C_2,  \quad k = m, m+1 .; 
    \label{maximum bound}
    \end{equation} 
see \cite{chen2015efficient} for a detailed proof. 
    
    Therefore, a substitution of (\ref{maximum bound}) into (\ref{nonlinear-Holder}) yields 
    \begin{eqnarray} 
    \nrm{\nabla_h \Bigl( \chi \left( \phi^{m+1} , \phi^m \right) \Bigr)}_2^2 
    \le C_7 C_2^{5} 
    \Bigl( \nrm{\nabla_h \Delta_h \phi^{m+1}}_2
    +  \nrm{\nabla_h \Delta_h \phi^m}_2 \Bigr) + C_7 C_2^6 . \label{(20)}
    \end{eqnarray}  
    Motivated by the Young inequality $a\cdot b \le C_8  a^2 
    +\alpha  b^2, ~\forall~  a ,  b  >  0  ,  \alpha > 0$ with 
    $a =  C_7C_2^{5}$, 
    $b = \nrm{\nabla_h \Delta_h \phi^{m+1}}_2 +  \nrm{\nabla_h \Delta_h \phi^m}_2$, 
            $\alpha = \frac{1}{128}\varepsilon^4$       
    $b = \nrm{\nabla_h \Delta_h \phi^{m+1}}_2 +  \nrm{\nabla_h \Delta_h \phi^m}_2$, 
    $\alpha = \frac{1}{128}\varepsilon^4$, we arrive at
    \begin{eqnarray} 
    \nrm{\nabla_h \Bigl(  \chi \left( \phi^{m+1} , \phi^m \right) \Bigr)}_2^2 
    & \le & C_9 C_2^{10} 
    + \frac{1}{128} \varepsilon^4 \Bigl( \nrm{\nabla_h \Delta_h \phi^{m+1}}_2 
    +  \nrm{\nabla_h \Delta_h \phi^m}_2  \Bigr)^2  + C_7 C_2^6 \nonumber 
    \\
    & \le & C_9 C_2^{10} + C_7 C_2^6 
    + \frac{1}{64}\varepsilon^4  \Bigl( \nrm{\nabla_h \Delta_h \phi^{m+1}}_2^2 
    +  \nrm{\nabla_h \Delta_h \phi^m}_2^2  \Bigr)   . \label{(22)}
    \end{eqnarray} 

    A combination of (\ref{estimate-mu-2}), (\ref{estimate-mu-3}) and (\ref{(22)})
    shows that 
    \begin{eqnarray} 
    \nrm{\nabla_h \mu^{m+1/2}}_2^2   
    \ge  \frac{5}{32}\varepsilon^4\nrm{\nabla_h \Delta_h \phi^{m+1}}_2^2 
    - \frac{1}{32}\varepsilon^4\nrm{\nabla_h \Delta_h \phi^m}_2^2 
    - \frac{1}{16}\varepsilon^4\nrm{\nabla_h \Delta_h \phi^{m-1}}_2^2
    - C_{10},\label{eqn:C10} 
    \end{eqnarray}
    with $C_{10} = 8 C_2^2 + 2 ( C_9 C_2^{10} + C_7 C_2^6 )$.
    
    Going back to (\ref{leading stability}), we obtain 
    \begin{equation} 
    s \sum_{m=1}^{l} \Bigl( \frac{1}{16}\varepsilon^4  \nrm{\nabla_h \Delta_h \phi^m}_2^2 - C_{10}  \Bigr) 
    \le C_1
    + \frac{3}{32}\varepsilon^4 s   \nrm{\nabla_h \Delta_h \phi^1}_2^2 
    + \frac{1}{16}\varepsilon^4 s  \nrm{\nabla_h \Delta_h \phi^{0}}_2^2 \leq C_{11},   
    \label{eqn:C11} 
    \end{equation} 
in which $C_{11}$ is independent on $h$, with a sufficient regularity assumption for $\phi^0 := \Phi^0$, $\phi^1 := \Phi^1$. Inequality \eqref{eqn:C11} is equivalent to  
    \begin{equation} 
    \frac1{16} \varepsilon^4 s \sum_{m=1}^{l}  \nrm{\nabla_h \Delta_h \phi^m}_2^2 
    \le C_{11} + C_{10} T .     
    \label{(25)} 
    \end{equation} 
    This in turn gives the $\ell^2 (0,T; H_h^3)$ bound of the numerical solution. 
    \end{proof}
    Note that $C_{10}$ and $C_{11}$, given by (\ref{eqn:C10}) 
    and (\ref{eqn:C11}), respectively, only depend on $L_x$, $L_y$, $L_z$, 
    $\varepsilon$ and several Sobolev embedding constants, and independent 
    of final time $T$, $h$ and $s$. 

\section{Optimal rate convergence analysis} \label{sec:convergence}
The convergence analysis is carried out in three steps. Firstly, in section \ref{subsec:error-equations}, we obtain error functions by using a standard consistence analysis. In the following, we provide an estimate for the nonlinear error term in section \ref{subsec:error-stability}. Finally, we recover an a-priori error assumption and present the optimal rate error estimate in sections \ref{subsec:a-priori} and \ref{subsec:error-estimate} , respectively.  
%
	
	\subsection{Error equations and consistency analysis}
	\label{subsec:error-equations}
    We assume that the exact solution has regularity of class $\mathcal{R}$: 
		\begin{equation}
			\phi_{e} \in \mathcal{R} := H^3 (0,T; C^0) \cap H^2 (0,T; C^4) \cap L^\infty (0,T; C^6).
			\label{assumption:regularity.1}
		\end{equation}
		
	To facilitate our error analysis, we need to construct an approximate solution to the chemical potential via the exact solution $\phi_e$. In addition, we note that the exact velocity $\u_e$ is not divergence-free at the discrete level ($\nabla_h\cdot\u_e \ne 0$). To overcome this difficulty, we must also construct an approximate solution to the velocity vector (again through the exact solution), which satisfies the divergence-free conditions at the discrete level. Therefore, we define the cell-centered grid functions
		\begin{align}
			\Gamma^{m+1/2} &\mathop{:=}{}  \chi\left(\Phi^{m+1}, \Phi^m\right) - \Phi_*^{m+1/2} -\varepsilon^2 \Delta_h \left(\frac34 \Phi^{m+1}  + \frac14 \Phi^{m-1}\right), 
			\label{def-Gamma}
			\\
			\quad {\bf U}^{m+1/2} &\mathop{:=}{} - \mathcal{P}_{h} \left( \gamma A_h \Phi_*^{m+1/2} \nabla_h \Gamma^{m+1/2} \right),
			\label{def-U}
			\\
			\Phi_*^{m+1/2} &\mathop{:=}{} \frac32 \Phi^m - \frac12 \Phi^{m-1}, 
		\end{align}
	for $1\le m\le M$, where  $\mathcal{P}_{h}$ is the discrete Helmholtz projection defined in equations (3.2), (3.3) in \cite{chen2015efficient}.  We need to enforce the discrete homogeneous Neumann boundary conditions for the chemical potential: ${\n}\cdot\nabla_h \Gamma^{m+1/2} = 0$, for all $1\le m\le M$, so that, in particular,  $\mathcal{P}_{h} \left( A_h \Phi_*^{m+1/2} \nabla_h \Gamma^{m+1/2} \right)$ is well defined. 
		
	With the assumed regularities, the constructed approximations $\Gamma^{m+1/2}$ and
		${\bf U}^{m+1/2}$ obey the following estimates:
		\begin{equation}
			\| \nabla_h \Gamma^{m+1/2} \|_{\infty} \leq C_{12}, \quad \| {\bf U}^{m+1/2} \|_\infty \leq C_{12},
			\label{regu:mu U}
		\end{equation}
	for $0\le m\le M$, where the constant $C_{12} >0$ is independent of $h>0$ and $s>0$.
		
	It follows that $(\Phi , \Gamma , {\bf U})$ satisfies the numerical scheme with an  $O (s^2 + h^2)$ truncation error:
		\begin{eqnarray}
			\frac{\displaystyle \Phi^{m+1} - \Phi^{m} }{s} &=&
			\Delta_h \Gamma^{m+1/2} -\nabla_{h} \cdot \left(A_h \Phi_*^{m+1/2} {\bf U}^{m+1/2}\right)+\tau^{m+1/2},
			\label{eq:consistency.1}
			\\
			\Gamma^{m+1/2} &=&  \chi\left(\Phi^{m+1}, \Phi^m\right) - \Phi_*^{m+1/2} -\varepsilon^2 \Delta_h \left(\frac34 \Phi^{m+1}  + \frac14 \Phi^{m-1}\right), 
			\label{eq:consistency.2}
			\\
			\quad {\bf U}^{m+1/2} &=& - \mathcal{P}_{h} \left( \gamma A_h \Phi_*^{m+1/2} \nabla_h \Gamma^{m+1/2} \right),
			\label{eq:consistency.3}
		\end{eqnarray}
	where the local truncation error satisfies
		\begin{equation}
			\nrm{\tau^{m+1/2}}_2 \le C_{13} (s^2+h^2),
		\end{equation}
	with $s\cdot M = T$, and $C_{13}$ independent of $h$ and $s$.
		
	The numerical error functions are denoted as
		\begin{equation}
			\tilde{\phi}^{m} := \Phi^{m} - \phi^{m}, \quad \tilde{\mu}^{m+1/2} := \Gamma^{m+1/2}-\mu^{m+1/2}, \quad \tilde{\u}^{m+1/2} := {\bf U}^{m+1/2} - \u^{m+1/2}.
		\end{equation}
	Subtracting (\ref{eq:consistency.1}) -- (\ref{eq:consistency.3}) from (\ref{2nd scheme-CHHS-reformulate-s1}) -- (\ref{2nd scheme-CHHS-reformulate-s3}) yields
		\begin{eqnarray}
			&&\frac{\tilde{\phi}^{m+1}-\tilde{\phi}^{m}}{s} = \Delta_{h}\tilde{\mu}^{m+1/2} - \nabla_h \cdot \left(\!A_{h}\tilde{\phi}_*^{m+1/2} {\bf U}^{m+1/2}\!+\!A_{h}\phi_*^{m+1/2} \tilde{\u}^{m+1/2}\!\right) + \tau^{m+1/2},
			\label{CHHS-2nd-error-1}
			\\
			&&\tilde{\mu}^{m+1/2} = \mathcal{N}^{m+1/2}-\tilde{\phi}_*^{m+1/2}-\varepsilon^{2}\Delta_{h}\tilde{\phi}_I^{m+1/2},
			\label{CHHS-2nd-error-2}
		\end{eqnarray}
		where
		\begin{eqnarray}
			&& \tilde{\phi}_*^{m+1/2} = \frac32 \tilde{\phi}^{m} - \frac12 \tilde{\phi}^{m-1} ,  \, \,  \tilde{\phi}_{I}^{m+1/2} = \frac34 \tilde{\phi}^{m+1} 
			+ \frac14 \tilde{\phi}^{m-1} , \nonumber 
			\\
			&&
			{\cal N}^{m+1/2} = \chi \left( \Phi^{m+1} , \Phi^m \right) 
			-  \chi \left( \phi^{m+1} , \phi^m \right) ,  \nonumber 
			\\
			&&\tilde{\u}^{m+1/2} = -\gamma\mathcal{P}_{h}\left(A_{h}\tilde{\phi}_*^{m+1/2} \nabla_{h}\Gamma^{m+1/2}+A_{h}\phi_*^{m+1/2} \nabla_{h}\tilde{\mu}^{m+1/2}\right),
			\label{CHHS-2nd-error-3}
		\end{eqnarray}
	for $1\le m\le M-1$. 
	
We also observe that $\tilde{\phi}^0 = \tilde{\phi}^1 \equiv 0$, due to our initial value choices $\phi^0 = \Phi^0$, $\phi^1 = \Phi^1$. This fact will facilitate the convergence analysis in later sections. 
	
	\subsection{Stability of the error functions}
	\label{subsec:error-stability}
	Note that both the CHHS equation (\ref{equation-CHHS-1})-(\ref{equation-CHHS-3}) is mass conservative at the continuous level: $\int_\Omega \phi (t) d {\bf x} = \int_\Omega \phi (0) d {\bf x}$, $\forall t >0$, while the numerical scheme (\ref{2nd scheme-CHHS-s1})-(\ref{2nd scheme-CHHS-s3}) is mass conservative at the discrete level: $( \phi^k, {\bf  1} ) = ( \phi^0 , {\bf 1} )$, $\forall k \ge 1$. Consequently, the following estimate is available; the detailed proof could be read in a recent work \cite{chen2015efficient}.  
	\begin{lem}
		\label{lemma:prelim est}
		Assume the exact solution is of regularity class $\mathcal{R}$.  Then, for any $1\le m\le M$,
		\begin{eqnarray}
		\| \tilde{\phi}^m \|_2 &\leq& C_{14} \left(  \| \nabla_h \tilde{\phi}^m \|_2 + h^2 \right) ,
		\label{convergence L2 est-1}
		\\
		\| \tilde{\phi}^m \|_{\infty} &\leq& C_{14} \left( \| \nabla_h \tilde{\phi}^m \|_2^{\frac34} \cdot \| \nabla_h \Delta_h \tilde{\phi}^m \|_2^{\frac14} + \| \nabla_h \tilde{\phi}^m \|_2 + h^2 \right) .
		\label{convergence inf est-1}
		\end{eqnarray}
		for some constant $C_{14}$ that is independent of $s$, $h$, and $m$.
	\end{lem}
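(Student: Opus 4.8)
The plan is to reduce both estimates to discrete functional inequalities applied to the mean-zero part of $\tilde{\phi}^m$; the only subtlety is that $\tilde{\phi}^m$ is merely \emph{approximately} mean-free, with a defect of size $h^2$ coming from numerical quadrature. First I would record the mass balance. Taking the discrete inner product of \eqref{2nd scheme-CHHS-s1} with ${\bf 1}$ and using summation by parts together with the no-flux conditions on $\mu^{m+1/2}$ and $p^{m+1/2}$ shows that the two discrete-divergence contributions vanish, so $(\phi^{m+1},{\bf 1}) = (\phi^m,{\bf 1})$ for $1 \le m \le M-1$; hence $(\phi^m,{\bf 1})$ is independent of $m$ for $m \ge 1$ and, via $\phi^1 = \Phi^1 = \P\phi_e(\cdot,t_1)$ (pointwise grid sampling), equals $h^3 \sum_{i,j,k} \phi_e(\xi_i,\xi_j,\xi_k,t_1)$, which is the midpoint-rule quadrature of $\int_\Omega \phi_e(\cdot,t_1)\,d{\bf x}$. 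On the other hand $(\Phi^m,{\bf 1})$ is the midpoint quadrature of $\int_\Omega \phi_e(\cdot,t_m)\,d{\bf x}$, while $\int_\Omega \phi_e(\cdot,t_m)\,d{\bf x} = \int_\Omega \phi_e(\cdot,t_1)\,d{\bf x}$ by mass conservation of the PDE. Subtracting these two quadratures of the same integral, and using $\phi_e \in L^\infty(0,T;C^2)$ (implied by the regularity class $\mathcal{R}$) so that the midpoint rule on the box $\Omega$ is second-order accurate, we get $\bigl| (\tilde{\phi}^m,{\bf 1}) \bigr| \le \hat{C} h^2$ for $1 \le m \le M$, with $\hat{C}$ depending only on $\| \phi_e \|_{L^\infty(0,T;C^2)}$ and $\Omega$ (and the defect is in fact zero for $m = 0,1$).

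Next I would split $\tilde{\phi}^m = \psi^m + c^m$, where $c^m := |\Omega|^{-1} (\tilde{\phi}^m,{\bf 1})$ is the constant cell average of the error and $\psi^m := \tilde{\phi}^m - c^m$ has zero mean. Since $c^m$ is a constant grid function, $\nabla_h \psi^m = \nabla_h \tilde{\phi}^m$ and $\nabla_h \Delta_h \psi^m = \nabla_h \Delta_h \tilde{\phi}^m$, while the previous step gives $\| c^m \|_2 \le \hat{C} |\Omega|^{-1/2} h^2$ and $\| c^m \|_\infty \le \hat{C} |\Omega|^{-1} h^2$. To obtain \eqref{convergence L2 est-1} I would invoke the discrete Poincar\'e inequality for mean-zero cell-centered grid functions, $\| \psi^m \|_2 \le C_P \| \nabla_h \psi^m \|_2$, and combine it with the triangle inequality, $\| \tilde{\phi}^m \|_2 \le \| \psi^m \|_2 + \| c^m \|_2 \le C_P \| \nabla_h \tilde{\phi}^m \|_2 + \hat{C} |\Omega|^{-1/2} h^2$. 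To obtain \eqref{convergence inf est-1} I would invoke the discrete counterpart of the three-dimensional Gagliardo--Nirenberg inequality $\| u \|_\infty \le C ( \| \nabla u \|_2^{3/4} \| \nabla \Delta u \|_2^{1/4} + \| \nabla u \|_2 )$ for zero-mean $u$ --- the exponents $3/4$ and $1/4$ being those forced by dimensional scaling when $d = 3$ --- applied to $\psi^m$, and then use $\| \tilde{\phi}^m \|_\infty \le \| \psi^m \|_\infty + \| c^m \|_\infty$; this gives \eqref{convergence inf est-1} after absorbing $C_P$, the Gagliardo--Nirenberg constant, $\hat{C}$ and $|\Omega|$ into a single $C_{14}$.

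The mass balance and the averaging split are routine; the substantive ingredients are the two discrete Sobolev inequalities, and of these the discrete Gagliardo--Nirenberg inequality is the main obstacle. One proves it either by a discrete eigenfunction expansion of the discrete Neumann Laplacian or by a one-dimensional slicing argument combined with discrete H\"older inequalities, in either case carefully tracking the interplay between the cell-centered and staggered grids; this is precisely the kind of technical estimate established in detail in \cite{chen2015efficient}, on which the lemma relies. The $h^2$ terms on the right-hand sides of \eqref{convergence L2 est-1} and \eqref{convergence inf est-1} are simply the footprint of the $O(h^2)$ defect in the discrete mass of the error function $\tilde{\phi}^m$.
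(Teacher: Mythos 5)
Your proposal is correct and follows exactly the route the paper intends: the paper does not prove Lemma~\ref{lemma:prelim est} in-line but points to \cite{chen2015efficient} after noting precisely the discrete-versus-continuous mass-conservation discrepancy, and your argument (midpoint-quadrature defect of size $O(h^2)$ in the mean of $\tilde{\phi}^m$, followed by the discrete Poincar\'e and discrete Gagliardo--Nirenberg inequalities applied to the mean-zero part) is the standard reconstruction of that cited proof, with the correct exponents $3/4$ and $1/4$ for $d=3$. No gaps.
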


	
	Before we carry out the stability analysis for the numerical error functions, we assume that the exact solution $\Phi$ and the constructed solutions $\Gamma$, ${\bf U}$ 
	have the following regularity: 
	\begin{equation} 
	\nrm{\Phi }_{\ell^\infty (0,T; W_h^{1,\infty} )} \le C_{12} ,  \quad 
	\nrm{\nabla_h \Gamma^{m+1/2}}_{\infty} \le C_{12} ,  \quad 
	\nrm{{\bf U}^{m+1/2} }_{\infty} \le C_{12} ,   \quad \forall \, 1 \le m \le M-1 . 
	\label{CHHS-regularity-1}
	\end{equation}
	In addition, we also set the $\| \cdot \|_\infty$ and $H_h^3$ norms (introduced by \eqref{discrete norm-infty} and \eqref{discrete norm-H3}) for the numerical solution $\phi^m$ as 
	\begin{equation} 
	M_0^m := \nrm{\phi^m}_{\infty}  ,  \quad 
	M_3^m := \nrm{\phi^m}_{H_h^3}  . 
	\label{CHHS-a priori-1}
	\end{equation}
	Note that we have an $\ell^\infty (0, T; H_h^1)$ and $\ell^2 (0,T; H_h^3)$ bound for the numerical solution, as given by (\ref{leading stability}), (\ref{(25)}), 
	respectively. Meanwhile, its $\ell^\infty (0,T; \ell^\infty)$ bound is not available at present. 
	This bound will be justified by later analysis.

The following theorem states the stability of the numerical error functions satisfying the error equations by (\ref{CHHS-2nd-error-1}) -- (\ref{CHHS-2nd-error-1}).

	\begin{thm}
	\label{thm:stability}
Assume the exact solution is of regularity class $\mathcal{R}$. 
Then the error function $\tilde{\phi}^m$ obeys the following 
discrete energy stability law: for any $1 \le m \le M-1$,
	\begin{eqnarray}
	 	&&
		\| \nabla_h \tilde{\phi}^{m+1}  \|_2^2 
	- \| \nabla_h \tilde{\phi}^m \|_2^2 
	+ \frac14 ( \| \nabla_h ( \tilde{\phi}^{m+1} - \tilde{\phi}^m )  \|_2^2 
	- \| \nabla_h ( \tilde{\phi}^m - \tilde{\phi}^{m-1} )  \|_2^2 )  
	+ \frac{11}{64} \varepsilon^2 s \| \nabla_h \Delta_h \tilde{\phi}^{m+1} \|_2^2   \nonumber 
	 	\\
	 	&\le& 
		\frac{\varepsilon^2 s}{16}  \| \nabla_h \Delta_h \tilde{\phi}^m \|_2^2  
	+ \frac{5 \varepsilon^2 s}{64}  \| \nabla_h \Delta_h \tilde{\phi}^{m-1} \|_2^2    
	+ 2s \| \tau^{m+1/2} \|_2^2  + sC_{28}D_3^{m+1}h^4 \nonumber 
	 	\\
	 	&& 
	 	+  s(C_{29}D_1^{m+1} + C_{30}D_2^{m+1}) 
		(  \|  \nabla_h \tilde{\phi}^{m+1} \|_2^2 
	+ \|  \nabla_h \tilde{\phi}^m \|_2^2  
	+ \|  \nabla_h \tilde{\phi}^{m-1} \|_2^2 )  , 
	 	\label{eq:stable result}
	\end{eqnarray}
	where 
	\begin{eqnarray}
       D_1^{m+1} &=& ((M_0^m)^{16/3} + (M_0^{m-1})^{16/3})((M_0^{m+1})^{8/3} + (M_0^m)^{8/3} + 1) +1 ,
       \label{Def-D1}
       \\
       D_2^{m+1} &=& ( (M_0^m)^4  + (M_0^{m-1})^4 +1 ) 
       ( (M_0^{m+1})^4  + (M_0^m)^4  +1 ),
       \label{Def-D2}
       \\
       D_3^{m+1} &=& M_0^{m+1}  + M_0^m + 1.
	\end{eqnarray}	
and the constants $C_{28}$, $C_{29}$, $C_{30}$ are given by \eqref{CHHS-2nd-error-9-4-1}-\eqref{CHHS-2nd-error-9-4-3}, respectively. 
\end{thm}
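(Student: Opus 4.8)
The plan is to carry out an $\ell^\infty(0,T;H_h^1)$ energy estimate rather than the classical $\ell^2$ one. Take the discrete inner product of the error equation \eqref{CHHS-2nd-error-1} with $-\Delta_h\tilde{\phi}_I^{m+1/2}=-\Delta_h\bigl(\tfrac34\tilde{\phi}^{m+1}+\tfrac14\tilde{\phi}^{m-1}\bigr)$ and multiply through by $2s$. Since every error grid function inherits the homogeneous Neumann boundary condition, summation by parts turns the temporal term into $\tfrac12\bigl(\nabla_h(\tilde{\phi}^{m+1}-\tilde{\phi}^m),\nabla_h(3\tilde{\phi}^{m+1}+\tilde{\phi}^{m-1})\bigr)$, which has exactly the form of the term $I_3$ in the proof of Theorem~\ref{thm:Wise2010}; the algebraic manipulation in \eqref{energy-est-4-1}--\eqref{energy-est-4-4}, now applied to $\tilde{\phi}$, bounds this below by $\nrm{\nabla_h\tilde{\phi}^{m+1}}_2^2-\nrm{\nabla_h\tilde{\phi}^m}_2^2+\tfrac14\bigl(\nrm{\nabla_h(\tilde{\phi}^{m+1}-\tilde{\phi}^m)}_2^2-\nrm{\nabla_h(\tilde{\phi}^m-\tilde{\phi}^{m-1})}_2^2\bigr)$, i.e.\ the left side of \eqref{eq:stable result} apart from the term $\tfrac{11}{64}\varepsilon^2 s\nrm{\nabla_h\Delta_h\tilde{\phi}^{m+1}}_2^2$. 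It then remains to estimate the diffusion, convection, and truncation terms coming from the right side of \eqref{CHHS-2nd-error-1}.

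For the diffusion term, summation by parts (using $\n\cdot\nabla_h\tilde{\mu}^{m+1/2}=0$) together with \eqref{CHHS-2nd-error-2} gives $2s(\Delta_h\tilde{\mu}^{m+1/2},-\Delta_h\tilde{\phi}_I^{m+1/2})=-2s\varepsilon^2\nrm{\nabla_h\Delta_h\tilde{\phi}_I^{m+1/2}}_2^2+2s(\nabla_h\mathcal{N}^{m+1/2},\nabla_h\Delta_h\tilde{\phi}_I^{m+1/2})-2s(\nabla_h\tilde{\phi}_*^{m+1/2},\nabla_h\Delta_h\tilde{\phi}_I^{m+1/2})$. Moved to the left side, the coercive contribution is the source of the good term: using $\tilde{\phi}_I^{m+1/2}=\tfrac14(3\tilde{\phi}^{m+1}+\tilde{\phi}^{m-1})$ and a weighted Cauchy inequality on the cross term, $2s\varepsilon^2\nrm{\nabla_h\Delta_h\tilde{\phi}_I^{m+1/2}}_2^2\ge c_1\varepsilon^2 s\nrm{\nabla_h\Delta_h\tilde{\phi}^{m+1}}_2^2-c_2\varepsilon^2 s\nrm{\nabla_h\Delta_h\tilde{\phi}^{m-1}}_2^2$ with $c_1>\tfrac{11}{64}$; the surplus $c_1-\tfrac{11}{64}$ is held in reserve to absorb the $O(\varepsilon^2 s)$ multiples of $\nrm{\nabla_h\Delta_h\tilde{\phi}^{m+1}}_2^2$ produced below, while the $c_2$-piece is relocated to the right side (origin of the $\tfrac5{64}\varepsilon^2 s\nrm{\nabla_h\Delta_h\tilde{\phi}^{m-1}}_2^2$ term there). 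The other two contributions are treated by Cauchy--Young, splitting off absorbable small multiples of $\varepsilon^2 s\nrm{\nabla_h\Delta_h\tilde{\phi}_I^{m+1/2}}_2^2$ and leaving $Cs\varepsilon^{-2}\nrm{\nabla_h\tilde{\phi}_*^{m+1/2}}_2^2\lesssim s(\nrm{\nabla_h\tilde{\phi}^m}_2^2+\nrm{\nabla_h\tilde{\phi}^{m-1}}_2^2)$ and $Cs\varepsilon^{-2}\nrm{\nabla_h\mathcal{N}^{m+1/2}}_2^2$. For the latter we use that $\mathcal{N}^{m+1/2}=\chi(\Phi^{m+1},\Phi^m)-\chi(\phi^{m+1},\phi^m)$ is cubic, so (modulo the discrete product rule) $\mathcal{N}^{m+1/2}=P_1\tilde{\phi}^{m+1}+P_2\tilde{\phi}^m$ with $P_i$ quadratic in $\Phi^{m+1},\Phi^m,\phi^{m+1},\phi^m$; discrete H\"older, $\nrm{\Phi}_{W_h^{1,\infty}}\le C_{12}$, $\nrm{\phi^k}_{H_h^1}\le C_2$ (see \eqref{CHHS-regularity-1}, \eqref{H^1-stability}), $M_0^k=\nrm{\phi^k}_\infty$, and Lemma~\ref{lemma:prelim est}, \eqref{convergence inf est-1}, for $\nrm{\tilde{\phi}^k}_\infty$ then show that $\nrm{\nabla_h\mathcal{N}^{m+1/2}}_2^2$ is dominated by a $D_2^{m+1}$-type coefficient times $\nrm{\nabla_h\tilde{\phi}^{m+1}}_2^2+\nrm{\nabla_h\tilde{\phi}^m}_2^2$, by terms $K_k\nrm{\nabla_h\tilde{\phi}^k}_2^{3/2}\nrm{\nabla_h\Delta_h\tilde{\phi}^k}_2^{1/2}$ ($k=m,m+1$) with $K_k$ polynomial in the $M_0$'s, and by $h^4$ terms with a $D_3^{m+1}$-type coefficient; Young's inequality with exponents $4$ and $\tfrac43$ on the middle terms yields an absorbable small multiple of $\varepsilon^2 s\nrm{\nabla_h\Delta_h\tilde{\phi}^{m+1}}_2^2$, a non-absorbable $\tfrac1{16}\varepsilon^2 s\nrm{\nabla_h\Delta_h\tilde{\phi}^m}_2^2$ sent to the right side, and a coefficient $\propto K_k^{4/3}$ times $\nrm{\nabla_h\tilde{\phi}^k}_2^2$ --- and since $K_k$ already carries fourth powers of the $M_0$'s, $K_k^{4/3}$ carries precisely the $16/3$ and $8/3$ powers in $D_1^{m+1}$.

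The convection term is the crux. Using \eqref{CHHS-2nd-error-3} with $\nabla_h\tilde{\mu}^{m+1/2}=\nabla_h\mathcal{N}^{m+1/2}-\nabla_h\tilde{\phi}_*^{m+1/2}-\varepsilon^2\nabla_h\Delta_h\tilde{\phi}_I^{m+1/2}$ substituted, and summation by parts (the velocities have zero normal flux), the convection term equals $-2s\bigl(A_h\tilde{\phi}_*^{m+1/2}{\bf U}^{m+1/2}+A_h\phi_*^{m+1/2}\tilde{\u}^{m+1/2},\nabla_h\Delta_h\tilde{\phi}_I^{m+1/2}\bigr)$, and the only appearance of $\nabla_h\Delta_h\tilde{\phi}_I^{m+1/2}$ to second power is
\[
-2s\gamma\varepsilon^2\bigl(A_h\phi_*^{m+1/2}\mathcal{P}_{h}(A_h\phi_*^{m+1/2}\nabla_h\Delta_h\tilde{\phi}_I^{m+1/2}),\nabla_h\Delta_h\tilde{\phi}_I^{m+1/2}\bigr)=-2s\gamma\varepsilon^2(\mathcal{P}_{h}{\bf q},{\bf q})=-2s\gamma\varepsilon^2\nrm{\mathcal{P}_{h}{\bf q}}_2^2\le 0,
\]
where ${\bf q}:=A_h\phi_*^{m+1/2}\nabla_h\Delta_h\tilde{\phi}_I^{m+1/2}$ and the last equality holds because the discrete Helmholtz projection $\mathcal{P}_{h}$ is an orthogonal projection. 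This non-positive term is discarded --- which is exactly why the $H_h^1$ estimate succeeds: its coefficient involves $\nrm{\phi_*^{m+1/2}}_\infty^2$, which is not available a priori and which the small ($O(\varepsilon^2)$) coercive diffusion could never dominate. Every remaining piece carries $\nabla_h\Delta_h\tilde{\phi}_I^{m+1/2}$ to first power only; Cauchy--Young peels off absorbable small multiples of $\varepsilon^2 s\nrm{\nabla_h\Delta_h\tilde{\phi}_I^{m+1/2}}_2^2$ and leaves $Cs\varepsilon^{-2}$ times one of $\nrm{A_h\tilde{\phi}_*^{m+1/2}{\bf U}^{m+1/2}}_2^2$, $\nrm{A_h\phi_*^{m+1/2}\mathcal{P}_{h}(A_h\tilde{\phi}_*^{m+1/2}\nabla_h\Gamma^{m+1/2})}_2^2$, $\nrm{A_h\phi_*^{m+1/2}\mathcal{P}_{h}(A_h\phi_*^{m+1/2}\nabla_h\mathcal{N}^{m+1/2})}_2^2$, $\nrm{A_h\phi_*^{m+1/2}\mathcal{P}_{h}(A_h\phi_*^{m+1/2}\nabla_h\tilde{\phi}_*^{m+1/2})}_2^2$; with $\nrm{{\bf U}^{m+1/2}}_\infty,\nrm{\nabla_h\Gamma^{m+1/2}}_\infty\le C_{12}$, $\nrm{\phi_*^{m+1/2}}_\infty\le\tfrac32 M_0^m+\tfrac12 M_0^{m-1}$, $\nrm{\mathcal{P}_{h}}\le 1$, Lemma~\ref{lemma:prelim est}, and the $\nrm{\nabla_h\mathcal{N}^{m+1/2}}_2^2$ estimate, these reduce again to $D_1^{m+1}$/$D_2^{m+1}$-weighted $\nrm{\nabla_h\tilde{\phi}^j}_2^2$ ($j\in\{m-1,m,m+1\}$) and $D_3^{m+1}$-weighted $h^4$ terms (plus further absorbable $\varepsilon^2 s\nrm{\nabla_h\Delta_h\tilde{\phi}^{m+1}}_2^2$ pieces).

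Finally, the truncation term is bounded by $2s\nrm{\tau^{m+1/2}}_2\nrm{\Delta_h\tilde{\phi}_I^{m+1/2}}_2$, and the elementary inequality $\nrm{\Delta_h w}_2^2\le\nrm{\nabla_h w}_2\nrm{\nabla_h\Delta_h w}_2$ (summation by parts and Cauchy--Schwarz) together with Young's inequality gives $\le 2s\nrm{\tau^{m+1/2}}_2^2$ plus an absorbable multiple of $\varepsilon^2 s\nrm{\nabla_h\Delta_h\tilde{\phi}_I^{m+1/2}}_2^2$ plus $Cs\nrm{\nabla_h\tilde{\phi}_I^{m+1/2}}_2^2\lesssim Cs(\nrm{\nabla_h\tilde{\phi}^{m+1}}_2^2+\nrm{\nabla_h\tilde{\phi}^{m-1}}_2^2)$. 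Collecting everything, absorbing all the small $\varepsilon^2 s\nrm{\nabla_h\Delta_h\tilde{\phi}^{m+1}}_2^2$ pieces into the reserved surplus of the good term (so that exactly $\tfrac{11}{64}\varepsilon^2 s\nrm{\nabla_h\Delta_h\tilde{\phi}^{m+1}}_2^2$ survives on the left), discarding the non-positive convection contribution, and moving the non-absorbable $\nrm{\nabla_h\Delta_h\tilde{\phi}^m}_2^2$, $\nrm{\nabla_h\Delta_h\tilde{\phi}^{m-1}}_2^2$, $\nrm{\tau^{m+1/2}}_2^2$, $h^4$ and $D_i$-weighted $\nrm{\nabla_h\tilde{\phi}^j}_2^2$ terms to the right side, yields \eqref{eq:stable result}, with $C_{28},C_{29},C_{30}$ the accumulated constants and the displayed fractions fixed by the particular Cauchy/Young weights used. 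The main obstacle is the convection estimate: recognizing that its highest-order error contribution is $-2s\gamma\varepsilon^2\nrm{\mathcal{P}_{h}{\bf q}}_2^2\le 0$, hence harmless and droppable, is the essential point; a secondary, purely technical chore is tracking the Young exponents so that the powers of the $M_0$'s assemble into precisely $D_1^{m+1}$, $D_2^{m+1}$, $D_3^{m+1}$.
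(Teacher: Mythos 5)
Your proposal is correct and follows essentially the same route as the paper's proof: testing the error equation with $-2\Delta_h\tilde{\phi}_I^{m+1/2}$, isolating the non-positive term $-2s\gamma\varepsilon^2\nrm{\mathcal{P}_h(A_h\phi_*^{m+1/2}\nabla_h\Delta_h\tilde{\phi}_I^{m+1/2})}_2^2$ from the convection error (the paper's $I_{4,4,2,3}$), controlling the nonlinear terms via the discrete Gagliardo--Nirenberg inequality and Young's inequality so that the $8/3$ and $16/3$ powers of $M_0$ emerge, and converting $\nrm{\nabla_h\Delta_h\tilde{\phi}_I^{m+1/2}}_2^2\ge\frac38\nrm{\nabla_h\Delta_h\tilde{\phi}^{m+1}}_2^2-\frac18\nrm{\nabla_h\Delta_h\tilde{\phi}^{m-1}}_2^2$ to produce the $\frac{11}{64}$ and $\frac{5}{64}$ coefficients. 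The only (immaterial) deviation is that you apply Cauchy--Schwarz before estimating $\nrm{\nabla_h\mathcal{N}^{m+1/2}}_2^2$, whereas the paper keeps the product and uses Young with exponents $8/3$ and $8/5$; both yield the same powers.
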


\begin{proof} 	
	Taking inner product of (\ref{CHHS-2nd-error-1}) with 
	$- 2 \Delta_h \tilde{\phi}_{I}^{m+1/2}  = - \Delta_h ( \frac32 \tilde{\phi}^{m+1} + \frac12 \tilde{\phi}^{m-1} )$ gives 
	\begin{eqnarray}
	I_4&:=&
	\nrm{ \nabla_h \tilde{\phi}^{m+1} }_2^2 - \nrm{ \nabla_h \tilde{\phi}^m }_2^2 
	+ \frac14 \Bigl( \nrm{ \nabla_h ( \tilde{\phi}^{m+1} - \tilde{\phi}^m )  }_2^2 
	- \nrm{ \nabla_h ( \tilde{\phi}^m - \tilde{\phi}^{m-1} )  }_2^2  \nonumber 
	\\
	&&
	+ \nrm{ \nabla_h ( \tilde{\phi}^{m+1} - 2 \tilde{\phi}^m 
		+ \tilde{\phi}^{m-1} )  }_2^2  \Bigr)\nonumber 
    \\
	&=&
	-2 s \left( \tau^{m+1/2} ,   \Delta_h \tilde{\phi}_{I}^{m+1/2}  \right) 
	+2 s \left( \nabla_h \Delta_h \tilde{\phi}_{I}^{m+1/2} ,  
		\nabla_h \tilde{\mu}^{m+1/2}  \right) \nonumber
	\\
	&&
	- 2 s \left( \nabla_h \Delta_h \tilde{\phi}_{I}^{m+1/2} , 
	A_h\tilde{\phi}_*^{m+1/2} {\bf U}^{m+1/2}\right)
	- 2 s \left( \nabla_h \Delta_h \tilde{\phi}_{I}^{m+1/2} , 
    A_h\phi_*^{m+1/2} \tilde{\u}^{m+1/2} \right)\nonumber
    \\
    &:=&2s(I_{4,1}+I_{4,2}+I_{4,3}+I_{4,4}).
	\label{CHHS-2nd-error-4}
	\end{eqnarray}
	The term associated with the local truncation error term $I_{4,1}$ in \eqref{CHHS-2nd-error-4} can be bounded 
	in a straightforward way: 
	\begin{eqnarray} 
 	I_{4,1}
	&\le&    \left\| \tau^{m+1/2}  \right\|_2^2 
	+ \frac14 \left\|   \Delta_h \tilde{\phi}_{I}^{m+1/2}   \right\|_2^2 ,   \nonumber 
	\\
	&\le&
	  \left\| \tau^{m+1/2}  \right\|_2^2 
	+ \frac14 \left\|   \nabla_h \tilde{\phi}_{I}^{m+1/2}   \right\|_2  
	\cdot  \left\|  \nabla_h \Delta_h \tilde{\phi}_{I}^{m+1/2}   \right\|_2   \nonumber 
	\\
	&\le& 
	\left\| \tau^{m+1/2}  \right\|_2^2 
	+ \frac{1}{\varepsilon^2} \left\|   \nabla_h \tilde{\phi}_{I}^{m+1/2}   \right\|_2^2  
	+ \frac{\varepsilon^2}{16} \left\|  \nabla_h \Delta_h \tilde{\phi}_{I}^{m+1/2}   \right\|_2^2  .
	\label{CHHS-2nd-error-5}
	\end{eqnarray} 
	
	The regular diffusion term $I_{4,2}$ in \eqref{CHHS-2nd-error-4} has the following decomposition: 
	\begin{eqnarray} 
	I_{4,2}&=&\left( \nabla_h \Delta_h \tilde{\phi}_{I}^{m+1/2} ,  
	\nabla_h \tilde{\mu}^{m+1/2}  \right)\nonumber\\  
	&= &  \left( \nabla_h \Delta_h \tilde{\phi}_{I}^{m+1/2} ,  
	\nabla_h  \left( {\cal N}^{m+1/2}  \right)  \right)
	\nonumber
	\\   
	&-&  \left( \nabla_h \Delta_h \tilde{\phi}_{I}^{m+1/2} ,  \nabla_h \tilde{\phi}_*^{m+1/2} \right) 
	-  \varepsilon^2 \nrm{ \nabla_h \Delta_h \tilde{\phi}_{I}^{m+1/2}  }_2^2\nonumber
	\\
	&:=& I_{4,2,1}+I_{4,2,2}+I_{4,2,3} . 
	\label{CHHS-2nd-error-6-1}
	\end{eqnarray}  
	The concave term $I_{4,2,2}$ in \eqref{CHHS-2nd-error-6-1} can be controlled by 
	\begin{eqnarray} 
	 I_{4,2,2}=- \left( \nabla_h \Delta_h \tilde{\phi}_{I}^{m+1/2} ,  \nabla_h \tilde{\phi}_*^{m+1/2} \right)
	\le 
	\frac{4}{\varepsilon^2}  \nrm{ \nabla_h \tilde{\phi}_*^{m+1/2} }_2^2  
	+ \frac{\varepsilon^2}{16} \nrm{ \nabla_h \Delta_h \tilde{\phi}_{I}^{m+1/2} }_2^2  .
	\label{CHHS-2nd-error-6-2}
	\end{eqnarray} 
	For the nonlinear error term $I_{4,2,1}$ in \eqref{CHHS-2nd-error-4}, we start from the following expansion  
	\begin{eqnarray} 
	{\cal N}^{m+1/2}  = && 
	\frac14 \Bigl(  \left( (\phi^{m+1})^2 + (\phi^m)^2 \right) 
	( \tilde{\phi}^{m+1} + \tilde{\phi}^m ) 
	\nonumber
	\\ 
	&&+  \left( ( \phi^{m+1} +  \Phi^{m+1} ) \tilde{\phi}^{m+1}  + ( \phi^m +  \Phi^m ) \tilde{\phi}^m \right) 
	( \Phi^{m+1} + \Phi^m )  \Bigr) .  
	\label{CHHS-2nd-error-6-3}
	\end{eqnarray} 
	An application of discrete H\"older's inequality to its gradient shows that 
	\begin{eqnarray} 
	\nrm{ \nabla_h {\cal N}^{m+1/2}  }
	&\le&  
	C_{15} \left(  \nrm{\phi^{m+1}  }_{\infty}^2 
	+  \nrm{ \Phi^{m+1} }_{\infty}^2   
	+ \nrm{\phi^m  }_{\infty}^2 
	+  \nrm{ \Phi^m }_{\infty}^2 \right) 
	(  \| \nabla_h \tilde{\phi}^{m+1} \|_2   
	+ \| \nabla_h \tilde{\phi}^m \|_2 )  \nonumber 
	\\
	&& 
	+ C_{15}\left(  \nrm{\phi^{m+1}  }_{\infty} 
	+  \nrm{ \Phi^{m+1} }_{\infty}   
	+ \nrm{\phi^m  }_{\infty} 
	+  \nrm{ \Phi^m }_{\infty} \right)  \nonumber 
	\\
	&&
	\, \, \, 
	\cdot \left(  \nrm{\nabla_h \phi^{m+1}  }
	+  \nrm{ \nabla_h \Phi^{m+1} }_2   
	+ \nrm{ \nabla_h \phi^m  } _2
	+  \nrm{ \nabla_h \Phi^m }_2 \right) 
	(  \| \tilde{\phi}^{m+1} \|_{\infty}   
	+ \| \tilde{\phi}^m \|_{\infty}  )    \nonumber 
	\\
	&\le&  
	C_{15} \left(2C_{12}^2 +  (M_0^{m+1})^2  + (M_0^m)^2  \right)  
	(  \|  \nabla_h \tilde{\phi}^{m+1} \|_2  
	+ \|  \nabla_h \tilde{\phi}^m \|_2  )   \nonumber 
	\\
	&&
	+ 2C_{15} \left(C_{12}+  M_0^{m+1}  + M_0^m \right) 
	( C_2 +  C_{12} ) 
	(  \| \tilde{\phi}^{m+1} \|_{\infty}  + \|  \tilde{\phi}^m \|_{\infty}   )  , 
	\label{CHHS-2nd-error-6-4}
	\end{eqnarray} 
	with the $\ell^\infty (0,T; H_h^1)$ estimate (\ref{H^1-stability}) 
	for the numerical solution, the regularity assumption (\ref{CHHS-regularity-1}) 
	for the exact solution and the a-priori set up (\ref{CHHS-a priori-1}) used. 
	Then we get 
	\begin{eqnarray} 
	I_{4,2,2}&=&
	\left( \nabla_h \Delta_h \tilde{\phi}_{I}^{m+1/2} ,  
	\nabla_h  {\cal N}^{m+1/2}    \right)   \nonumber 
	\\
	&\le&  
	C_{15} \left(2C_{12}^2 +  (M_0^{m+1})^2  + (M_0^m)^2  \right)  
		\left(  \nrm{  \nabla_h \tilde{\phi}^{m+1} }_2  
		+ \nrm{  \nabla_h \tilde{\phi}^m }_2  \right) 
		\cdot \nrm{ \nabla_h \Delta_h \tilde{\phi}_{I}^{m+1/2} }_2
	\nonumber 
	\\
	&&
	+C_{18}    
	\nrm{  \tilde{\phi}^m }_{\infty} 
	\cdot \nrm{ \nabla_h \Delta_h \tilde{\phi}_{I}^{m+1/2} }_2
	+ C_{18}
    \nrm{  \tilde{\phi}^{m+1} }_{\infty}  
	\cdot \nrm{ \nabla_h \Delta_h \tilde{\phi}_{I}^{m+1/2}}_2 , 
	\label{CHHS-2nd-error-6-5}
	\end{eqnarray} 
	with $C_{16} = 2C_{12}C_{15}(C_2+C_{12})$, $C_{17} =2C_{15}(C_2+C_{12})$ and $C_{18} = C_{16} + C_{17}  (  M_0^{m+1}  + M_0^m )$.   
	The first part in \eqref{CHHS-2nd-error-6-5} can be controlled by Cauchy inequality: 
	\begin{eqnarray} 
	&&
	C_{15} \left(2C_{12}^2 +  (M_0^{m+1})^2  + (M_0^m)^2  \right)  
	\left(  \nrm{  \nabla_h \tilde{\phi}^{m+1} }_2  
	+ \nrm{  \nabla_h \tilde{\phi}^m }_2  \right) 
	\cdot \nrm{ \nabla_h \Delta_h \tilde{\phi}_{I}^{m+1/2} }_2  \nonumber 
	\\
	&\le& 
	\frac{\varepsilon^2}{16}  \nrm{ \nabla_h \Delta_h \tilde{\phi}_{I}^{m+1/2} }_2^2 
	+  \frac{C_{19}}{\varepsilon^2}  \left(  \nrm{  \nabla_h \tilde{\phi}^{m+1} }_2^2 
	+ \nrm{  \nabla_h \tilde{\phi}^m }_2^2  \right) ,  
	\label{CHHS-2nd-error-6-6}
	\end{eqnarray} 
	with $C_{19} = 8 C_{15}^2 \left(2C_{12}^2 +  (M_0^{m+1})^2  + (M_0^m)^2  \right)^2$. 
	For the second part in \eqref{CHHS-2nd-error-6-5} , we observe that the maximum norm of the numerical error 
	can be analyzed by an application of Gagliardo-Nirenberg type inequality in 3-D, 
	similar to (\ref{maximum bound}): 
	\begin{eqnarray} 
	\nrm{ \tilde{\phi}^m }_{\infty}   
	\le  C_{14} \left( \| \nabla_h \tilde{\phi}^m \|_2^{\frac34} \| \nabla_h \Delta_h \tilde{\phi}^m \|_2^{\frac14} + \| \nabla_h \tilde{\phi}^m \|_2 + h^2 \right)  . 
	\label{CHHS-2nd-error-6-7}
	\end{eqnarray} 
    With an application of the Young inequality to the second part in \eqref{CHHS-2nd-error-6-5}, we arrive at 
	\begin{eqnarray} 
	&&
	C_{18} \nrm{ \tilde{\phi}^m }_{\infty}   
	\cdot \nrm{ \nabla_h \Delta_h \tilde{\phi}_{I}^{m+1/2} }_2
	\nonumber
	\\
	&\le& C_{14}C_{18} \left( \nrm{ \nabla_h \tilde{\phi}^m}_2^{\frac34}  \cdot  
	\nrm{\nabla_h \Delta_h \tilde{\phi}^m}_2^{\frac14}  
	+ \nrm{\nabla_h \tilde{\phi}^m}_2 + h^2\right) 
	\cdot \nrm{ \nabla_h \Delta_h \tilde{\phi}_{I}^{m+1/2} }_2  \nonumber 
	\\
	&\le& 
	C^{\alpha, \varepsilon}_1  (C_{14}C_{18})^{8/3} \nrm{ \nabla_h \tilde{\phi}^m}_2^2    
	+ \alpha \varepsilon^2 \left( \nrm{\nabla_h \Delta_h \tilde{\phi}^m}_2^{2/5}  
	\cdot \nrm{ \nabla_h \Delta_h \tilde{\phi}_{I}^{m+1/2} }_2^{8/5}  \right)
	\nonumber
	\\
	&&+ \frac{16(C_{14}C_{18})^2}{\varepsilon^2}  \nrm{\nabla_h \tilde{\phi}^m}_2^2 +  \frac{16(C_{14}C_{18})^2}{\varepsilon^2}h^4
	+ \frac{\varepsilon^2}{32} 
	\nrm{ \nabla_h \Delta_h \tilde{\phi}_{I}^{m+1/2} }_2^2  \nonumber  
	\\
	&\le& 
	C^{\alpha, \varepsilon}_2  ( C_{18}^{8/3} + C_{18}^2 ) \nrm{ \nabla_h \tilde{\phi}^m}_2^2  
	+ \frac{\varepsilon^2}{32} \nrm{ \nabla_h \Delta_h \tilde{\phi}_{I}^{m+1/2} }_2^2  
	+ \frac{16(C_{14}C_{18})^2}{\varepsilon^2}h^4 
	\nonumber
	\\
	&&+ \alpha \varepsilon^2 \left( \nrm{\nabla_h \Delta_h \tilde{\phi}^m}_2^{2/5}  
	\cdot \nrm{ \nabla_h \Delta_h \tilde{\phi}_{I}^{m+1/2} }_2^{8/5}  \right)   ,  
	\label{CHHS-2nd-error-6-8}
	\end{eqnarray} 
	for any $\alpha > 0$. Furthermore, the last term appearing in 
	(\ref{CHHS-2nd-error-6-8}) can also be handled by Young's inequality: 
	\begin{eqnarray} 
	\alpha \varepsilon^2\nrm{\nabla_h \Delta_h \tilde{\phi}^m}_2^{2/5}  
	\cdot \nrm{ \nabla_h \Delta_h \tilde{\phi}_{I}^{m+1/2} }_2^{8/5}  
	\le \frac{1}{5}\alpha \varepsilon^2 \nrm{\nabla_h \Delta_h \tilde{\phi}^m}_2^2  
	+ \frac{4}{5}\alpha \varepsilon^2 \nrm{ \nabla_h \Delta_h \tilde{\phi}_{I}^{m+1/2} }_2^2 .  
	\label{CHHS-2nd-error-6-8-2}
	\end{eqnarray} 
	We can always choose an $\alpha$, such that $\frac{1}{5}\alpha \le \frac{1}{64}$ and $\frac{4}{5}\alpha \le \frac{1}{32}$, so that the following bound is available: 
	\begin{eqnarray} 
	C_{18} \nrm{ \tilde{\phi}^m }_{\infty}   
	\cdot \nrm{ \nabla_h \Delta_h \tilde{\phi}_{I}^{m+1/2} }_2
	\le && C^{\alpha, \varepsilon}_2  ( C_{18}^{8/3} + C_{18}^2 ) \nrm{ \nabla_h \tilde{\phi}^m}_2^2  
	+ \frac{\varepsilon^2}{16} \nrm{ \nabla_h \Delta_h \tilde{\phi}_{I}^{m+1/2} }_2^2 
	\nonumber
	\\ 
	&&+ \frac{\varepsilon^2}{64}  \nrm{\nabla_h \Delta_h \tilde{\phi}^m}_2^2 + \frac{16(C_{14}C_{18})^2}{\varepsilon^2}h^4 .  
	\label{CHHS-2nd-error-6-8-3}
	\end{eqnarray} 
	A similar estimate for the third part in \eqref{CHHS-2nd-error-6-5} can also be derived as 
	\begin{eqnarray} 
	C_{18} \nrm{ \tilde{\phi}^{m+1} }_{\infty}   
	\cdot \nrm{ \nabla_h \Delta_h \tilde{\phi}_{I}^{m+1/2} }_2
	\le &&C^{\alpha, \varepsilon}_2  ( C_{18}^{8/3} + C_{18}^2 ) 
	\nrm{ \nabla_h \tilde{\phi}^{m+1}}_2^2  
	+ \frac{\varepsilon^2}{16} \nrm{ \nabla_h \Delta_h \tilde{\phi}_{I}^{m+1/2} }_2^2  
	\nonumber
	\\
	&&+ \frac{\varepsilon^2}{64}  \nrm{\nabla_h \Delta_h \tilde{\phi}^{m+1} }_2^2 + \frac{16(C_{14}C_{18})^2}{\varepsilon^2}h^4 .  
	\label{CHHS-2nd-error-6-8-4}
	\end{eqnarray} 
	Consequently, a combination of (\ref{CHHS-2nd-error-6-5}), 
	(\ref{CHHS-2nd-error-6-6}), (\ref{CHHS-2nd-error-6-8-3}) and (\ref{CHHS-2nd-error-6-8-4}) yields 
	\begin{eqnarray} 
	I_{4,2,2}  
	&\le&  
	\left( C^{\alpha, \varepsilon}_2  ( C_{18}^{8/3} + C_{18}^2 )  
	+ C_{19} \varepsilon^{-2} \right) 
	(  \|  \nabla_h \tilde{\phi}^{m+1} \|_2^2 
	+ \|  \nabla_h \tilde{\phi}^m \|_2^2  )  \nonumber 
	\\
	&&
	+ \frac{3 \varepsilon^2}{16} \nrm{ \nabla_h \Delta_h \tilde{\phi}_{I}^{m+1/2} }_2^2   
	+ \frac{\varepsilon^2}{64}  ( \| \nabla_h \Delta_h \tilde{\phi}^m \|_2^2 
	+ \| \nabla_h \Delta_h \tilde{\phi}^{m+1} \|_2^2 )
	\nonumber
	\\
	&&+ \frac{32(C_{14}C_{18})^2}{\varepsilon^2}h^4 , 
	\label{CHHS-2nd-error-6-10}
	\end{eqnarray} 
	Note that $C_{19}$ is involved with $(M_0^m)^4$ and $(M_0^{m+1})^4$, while $C_{18}$ is involved with $M_0^m$ and $M_0^{m+1}$. 
	As a result, a combination of (\ref{CHHS-2nd-error-6-1}), 
	(\ref{CHHS-2nd-error-6-2}) and (\ref{CHHS-2nd-error-6-10}) shows that 
	\begin{eqnarray} 
	I_{4,2}
	&\le& 
	\left( C^{\alpha, \varepsilon}_2  ( C_{18}^{8/3} + C_{18}^2 )  
	+ C_{19} \varepsilon^{-2} \right) 
	(  \|  \nabla_h \tilde{\phi}^{m+1} \|_2^2 
	+ \|  \nabla_h \tilde{\phi}^m \|_2^2  
	+ \|  \nabla_h \tilde{\phi}^{m-1} \|_2^2 )   \nonumber 
	\\
	&& 
	- \frac{3 \varepsilon^2}{4} \nrm{ \nabla_h \Delta_h \tilde{\phi}_{I}^{m+1/2} }_2^2    
	+ \frac{\varepsilon^2}{64}  ( \| \nabla_h \Delta_h \tilde{\phi}^m \|_2^2 
	+ \| \nabla_h \Delta_h \tilde{\phi}^{m+1} \|_2^2 )
    + \frac{32(C_{14}C_{18})^2}{\varepsilon^2}h^4  .
	\label{CHHS-2nd-error-6-11}
	\end{eqnarray} 
	
	Next we focus our attention on the terms associated with 
	the convection term and the highest order nonlinear diffusion. 
	The analysis of this part is highly non-trivial. 
	
	The $I_{4,3}$ in \eqref{CHHS-2nd-error-4} can be bounded by 
	\begin{eqnarray} 
	I_{4,3}&=&
	- \left( \nabla_h \Delta_h \tilde{\phi}_{I}^{m+1/2} , 
	A_h\tilde{\phi}_*^{m+1/2} {\bf U}^{m+1/2}  \right) \nonumber\\
	&\le&  \| \nabla_h \Delta_h \tilde{\phi}_{I}^{m+1/2} \|_2 
	\cdot \| \tilde{\phi}_*^{m+1/2} \|_2  \cdot  \| {\bf U}^{m+1/2} \|_{\infty}  \nonumber 
	\\
	&\le& 
	 C_{12}  \| \nabla_h \Delta_h \tilde{\phi}_{I}^{m+1/2} \|_2 
	\cdot \| \tilde{\phi}_*^{m+1/2} \|_2   \nonumber 
	\\
	&\le&
	\frac{\varepsilon^2}{16} \| \nabla_h \Delta_h \tilde{\phi}_{I}^{m+1/2} \|_2^2 
	+  \frac{4 C_{12}^2}{\varepsilon^2} \| \tilde{\phi}_*^{m+1/2} \|_2^2  \nonumber 
	\\
	&\le& 
	\frac{\varepsilon^2}{16} \| \nabla_h \Delta_h \tilde{\phi}_{I}^{m+1/2} \|_2^2 
	+  \frac{4 C_{12}^2 C_{20}^2}{\varepsilon^2} 
	\left( \| \nabla_h \tilde{\phi}_*^{m+1/2} \|_2^2  + h^4 \right) \nonumber 
	\\
	&\le& 
	\frac{\varepsilon^2}{16} \| \nabla_h \Delta_h \tilde{\phi}_{I}^{m+1/2} \|_2^2 
	+  \frac{18 C_{12}^2 C_{20}^2}{\varepsilon^2} 
	( \| \nabla_h \tilde{\phi}^m \|_2^2 
	+ \| \nabla_h \tilde{\phi}^{m-1} \|_2^2 + h^4 )  ,  
	\label{CHHS-2nd-error-7}
	\end{eqnarray} 
in which $C_{20} = \sqrt{2} C_{14}$, so that the inequality $\| \tilde{\phi}_*^{m+1/2} \|_2^2 \le C_{20}^2 ( \| \nabla_h \tilde{\phi}_*^{m+1/2} \|_2^2 + h^4 )$ is a direct consequence of estimate (\ref{convergence L2 est-1}) in Lemma~\ref{lemma:prelim est}.	Note that the 
	regularity assumption (\ref{CHHS-regularity-1}) for the constructed solution 
	${\bf U}$ is used in the derivation. 
	
	For the term $I_{4,4}$ in \eqref{CHHS-2nd-error-4}, the expansion (\ref{CHHS-2nd-error-3}) for the velocity 
	numerical error indicates that 
	\begin{eqnarray} 	
	I_{4,4}
	&=&   - \left( A_h\phi_*^{m+1/2} \nabla_h \Delta_h \tilde{\phi}_{I}^{m+1/2} ,
	\tilde{\u}^{m+1/2}  \right)   \nonumber 
	\\
	&=&
	\gamma \left( A_h\phi_*^{m+1/2} \nabla_h \Delta_h \tilde{\phi}_{I}^{m+1/2} ,
	{\cal P}_h  \left( A_h\tilde{\phi}_*^{m+1/2} \nabla_h \Gamma^{m+1/2}  \right)  \right) 
	\nonumber
	\\ 
	&+&  \gamma \left( A_h\phi_*^{m+1/2} \nabla_h \Delta_h \tilde{\phi}_{I}^{m+1/2} ,
	{\cal P}_h  \left( A_h\phi_*^{m+1/2} \nabla_h \tilde{\mu}^{m+1/2}  \right)  \right)
	\nonumber
	\\
	&:=& I_{4,4,1}+I_{4,4,2}.
	\label{CHHS-2nd-error-8-1}
	\end{eqnarray} 
	The first term $I_{4,2,1}$ in \eqref{CHHS-2nd-error-8-1} can be estimated in a standard way: 
	\begin{eqnarray} 
   I_{4,4,1}
	&\le& \gamma  \nrm{ \phi_*^{m+1/2} }_{\infty} \cdot 
	\nrm{ \nabla_h \Delta_h \tilde{\phi}_{I}^{m+1/2} }_2 \cdot 
	\nrm{ {\cal P}_h  \left(A_h \tilde{\phi}_*^{m+1/2} 
		\nabla_h \Gamma^{m+1/2}  \right)  }_2    \nonumber 
	\\
	&\le& 
	C_{21} \gamma  ( M_0^m + M_0^{m-1} ) 
	\nrm{ \nabla_h \Delta_h \tilde{\phi}_{I}^{m+1/2} }_2 \cdot 
	\nrm{ A_h\tilde{\phi}_*^{m+1/2} \nabla_h \Gamma^{m+1/2}  }_2    \nonumber 
	\\
	&\le& 
	C_{21} \gamma  ( M_0^m + M_0^{m-1} ) 
	\nrm{ \nabla_h \Delta_h \tilde{\phi}_{I}^{m+1/2} }_2 \cdot 
	\nrm{ \tilde{\phi}_*^{m+1/2} }_2  
	\cdot  \nrm{ \nabla_h \Gamma^{m+1/2}  }_{\infty}    \nonumber 
	\\
	&\le& 
	C_{12} C_{21} \gamma  ( M_0^m + M_0^{m-1} ) 
	\nrm{ \nabla_h \Delta_h \tilde{\phi}_{I}^{m+1/2} }_2 
	\cdot \nrm{ \tilde{\phi}_*^{m+1/2} }_2   \nonumber 
	\\
	&\le& 
	 C_{12}C_{14}C_{21} \gamma ( M_0^m + M_0^{m-1} ) 
	\| \nabla_h \Delta_h \tilde{\phi}_{I}^{m+1/2} \|_2 \cdot 
	( \| \nabla_h \tilde{\phi}_*^{m+1/2} \|_2 + h^2 )  \nonumber 
	\\
	&\le& 
	\frac{C_{22} }{\varepsilon^2}   
	( \| \nabla_h \tilde{\phi}^m \|_2^2  
	+ \| \nabla_h \tilde{\phi}^{m-1} \|_2^2  + h^4 ) 
	+ \frac{1}{16} \varepsilon^2 \| \nabla_h \Delta_h \tilde{\phi}_{I}^{m+1/2} \|_2^2 , 
	\label{CHHS-2nd-error-8-2}
	\end{eqnarray} 
	where $C_{22}=72 C_{12}^2C_{14}^2C_{21}^2 \gamma^2 ( (M_0^m)^2 + (M_0^{m-1})^2) $ in which we used the property $\nrm{ {\cal P}_h \v }_2 \le \nrm{ \v }_2$, 
	$\forall \v \in L^2$, for the Helmholtz projection operator ${\cal P}_h$, 
	in the second step \cite{chen2015efficient}. Note that $(M_0^m)^2$ and $(M_0^{m-1})^2$ are involved in the growth coefficient. 
	
	The second term $I_{4,2,2}$ in \eqref{CHHS-2nd-error-8-1} can be expanded as 
	\begin{eqnarray} 
	I_{4,4,2}&=&
	\gamma \left(A_h \phi_*^{m+1/2} \nabla_h \Delta_h \tilde{\phi}_{I}^{m+1/2} ,
	{\cal P}_h  \left( A_h\phi_*^{m+1/2} \nabla_h \tilde{\mu}^{m+1/2}  
	\right)  \right)   \nonumber 
	\\
	&=&  \gamma \left( A_h\phi_*^{m+1/2} \nabla_h \Delta_h \tilde{\phi}_{I}^{m+1/2} ,
	{\cal P}_h  \left(A_h \phi_*^{m+1/2} \nabla_h \left(  {\cal N}^{m+1/2}  \right)  \right)  \right) 
	\nonumber
	\\
	&&-  \gamma \left(A_h \phi_*^{m+1/2} \nabla_h \Delta_h \tilde{\phi}_{I}^{m+1/2} ,
	{\cal P}_h  \left(A_h \phi_*^{m+1/2} \nabla_h \tilde{\phi}_*^{m+1/2}  \right)  \right)    \nonumber 
	\\
	&&
	- \gamma \varepsilon^2 \left(A_h \phi_*^{m+1/2} 
	\nabla_h \Delta_h \tilde{\phi}_{I}^{m+1/2} ,
	{\cal P}_h  \left(A_h \phi_*^{m+1/2} \nabla_h \Delta_h \tilde{\phi}_{I}^{m+1/2}  \right)  \right)\nonumber
	\\
	&:=&\gamma\left( I_{4,4,2,1}+I_{4,4,2,2}+ \varepsilon^2 I_{4,4,2,3}\right) . 
	\label{CHHS-2nd-error-8-3}
	\end{eqnarray} 
	It is observed that the third term $I_{4,4,2,3}$ in \eqref{CHHS-2nd-error-8-3}, which corresponds to the highest order 
	nonlinear diffusion, is always non-positive: 
	\begin{eqnarray} 
	I_{4,4,2,3}
	&=&  -  \nrm{ {\cal P}_h  \left( A_h\phi_*^{m+1/2} 
		\nabla_h \Delta_h \tilde{\phi}_{I}^{m+1/2}  \right) }_2^2
	\le 0 , 
	\label{CHHS-2nd-error-8-4}
	\end{eqnarray} 
	based on the identity $\left( \u , {\cal P}_h \v \right) 
	= \left( {\cal P}_h \u , {\cal P}_h \v \right)$ for any vector $\u, \v \in L^2$. 
	The above inequality is the key reason for an $\ell^\infty (0,T; H_h^1)$ error 
	estimate instead of the standard $\ell^\infty (0,T; \ell^2)$ one. 
	
	The analysis for second term $I_{4,4,2,2}$ in \eqref{CHHS-2nd-error-8-3} is straightforward: 
	\begin{eqnarray} 
	I_{4,4,2,2}
	&=&
	-  \left( A_h\phi_*^{m+1/2} \nabla_h \Delta_h \tilde{\phi}_{I}^{m+1/2} ,
	{\cal P}_h  \left(A_h \phi_*^{m+1/2} \nabla_h \tilde{\phi}_{I}^m  \right)  \right)  \nonumber 
	\\
	&\le&
	\nrm{ A_h\phi_*^{m+1/2} \nabla_h \Delta_h \tilde{\phi}_{I}^{m+1/2} }_2 
	\cdot  \nrm{ {\cal P}_h  \left(A_h \phi_*^{m+1/2} \nabla_h \tilde{\phi}_*^{m+1/2}  \right)  }_2 
	\nonumber
	\\ 
	&\le& \nrm{ A_h\phi_*^{m+1/2} \nabla_h \Delta_h \tilde{\phi}_{I}^{m+1/2} }_2 
	\cdot  \nrm{ A_h\phi_*^{m+1/2} \nabla_h \tilde{\phi}_*^{m+1/2} }_2   \nonumber 
	\\
	&\le&
	\nrm{ \phi_*^{m+1/2} }_{\infty}^2 
	\cdot \nrm{ \nabla_h \Delta_h \tilde{\phi}_{I}^{m+1/2} }_2 
	\cdot  \nrm{ \nabla_h \tilde{\phi}_*^{m+1/2} }_2 \nonumber
	\\
	&=&  C_{23} ( (M_0^m)^2  + (M_0^{m-1} )^2 ) 
	\nrm{ \nabla_h \Delta_h \tilde{\phi}_{I}^{m+1/2} }_2 
	\cdot  \nrm{ \nabla_h \tilde{\phi}_*^{m+1/2} }_2     \nonumber 
	\\
	&\le& 
	\frac{\varepsilon^2}{16 \gamma}  
	\nrm{ \nabla_h \Delta_h \tilde{\phi}_{I}^{m+1/2} }_2^2 
	+ \frac{18C_{23}^2 \gamma ( (M_0^m)^4 + (M_0^{m-1})^4 ) }{\varepsilon^2}  
	( \| \nabla_h \tilde{\phi}^m \|_2^2 
	+  \| \nabla_h \tilde{\phi}^{m-1} \|_2^2  )  .
	\label{CHHS-2nd-error-8-5}
	\end{eqnarray} 
	Also note that $(M_0^m)^4$ and $(M_0^{m-1})^4$ are involved in this growth coefficient. 
	
	For the first term $I_{4,4,2,1}$ of (\ref{CHHS-2nd-error-8-3}), we start from an application 
	of Cauchy inequality and discrete H\"older's inequality: 
	\begin{eqnarray} 
	I_{4,4,2,1}&=&
	\left( A_h\phi_*^{m+1/2} \nabla_h \Delta_h \tilde{\phi}_{I}^{m+1/2} ,
	{\cal P}_h  \left(A_h \phi_*^{m+1/2} \nabla_h \left(  {\cal N}^{m+1/2}  \right)  \right)  \right)   \nonumber 
	\\
	&\le&  
	\nrm{ A_h\phi_*^{m+1/2} \nabla_h \Delta_h \tilde{\phi}_{I}^{m+1/2} }_2  \cdot 
	\nrm{ {\cal P}_h  \left(A_h \phi_*^{m+1/2} \nabla_h \left(  {\cal N}^{m+1/2}  
		\right)  \right)  }_2    \nonumber 
	\\
	&\le&
	\nrm{ A_h\phi_*^{m+1/2} \nabla_h \Delta_h \tilde{\phi}_{I}^{m+1/2} }_2  \cdot 
	\nrm{ A_h\phi_*^{m+1/2} \nabla_h \left(  {\cal N}^{m+1/2}  \right)  }_2   \nonumber 
	\\
	&\le&
	   \nrm{ \phi_*^{m+1/2} }_{\infty}^2  
	\cdot \nrm{ \nabla_h \Delta_h \tilde{\phi}_{I}^{m+1/2} }_2  
	\cdot \nrm{ \nabla_h \left(  {\cal N}^{m+1/2}  \right)  }_2   \nonumber 
	\\
	&\le&
	C_{24} ( (M_0^m)^2  + (M_0^{m-1})^2 ) 
	\nrm{ \nabla_h \Delta_h \tilde{\phi}_{I}^{m+1/2} }_2  
	\cdot \nrm{ \nabla_h \left(  {\cal N}^{m+1/2}  \right)  }_2  .
	\label{CHHS-2nd-error-8-6}
	\end{eqnarray} 
	The rest estimates are very similar to those for the regular diffusion. 
	The inequalities (\ref{CHHS-2nd-error-6-4}) shows that 
	\begin{eqnarray} 
	&&
	C_{24} ( (M_0^m)^2  + (M_0^{m-1})^2 ) 
	\nrm{ \nabla_h \Delta_h \tilde{\phi}_{I}^{m+1/2} }_2  
	\cdot \nrm{ \nabla_h \left(  {\cal N}^{m+1/2}  \right)  }_2  \nonumber 
	\\
	&\le&  
	C_{15}C_{24}( (M_0^m)^2  + (M_0^{m-1})^2 )  
	\left(2C_{12}^2+  (M_0^{m+1})^2  + (M_0^m)^2  \right)\cdot\nonumber
	\\
	&&  
	(  \|  \nabla_h \tilde{\phi}^{m+1} \|_2  
	+ \|  \nabla_h \tilde{\phi}^m \|_2  ) 
	\cdot \nrm{ \nabla_h \Delta_h \tilde{\phi}_{I}^{m+1/2} }_2
  \nonumber 
	\\
	&&
	+ C_{25}   
	\|  \tilde{\phi}^m \|_{\infty}
	\cdot \nrm{ \nabla_h \Delta_h \tilde{\phi}_{I}^{m+1/2} }_2
	+ C_{25}
	\|  \tilde{\phi}^{m+1} \|_{\infty}
	\cdot \nrm{ \nabla_h \Delta_h \tilde{\phi}_{I}^{m+1/2} }_2, 
	\label{CHHS-2nd-error-8-7}
	\end{eqnarray}
	where 
	\begin{eqnarray}\label{Def-C25}
	C_{25}=C_{24} ( (M_0^m)^2  + (M_0^{m-1})^2 )  
		\left( C_{16} + C_{17}  (  M_0^{m+1}  + M_0^m )  \right). 
	\end{eqnarray}		
	The bound for the first term in \eqref{CHHS-2nd-error-8-7} can be derived in the same manner 
	as in (\ref{CHHS-2nd-error-6-6})
	\begin{eqnarray} 
	&&
	C_{15}C_{24}( (M_0^m)^2  + (M_0^{m-1})^2 )  
	\left(2C_{12}^2+  (M_0^{m+1})^2  + (M_0^m)^2  \right)\cdot 
	\nonumber
	\\
	&&
	(  \|  \nabla_h \tilde{\phi}^{m+1} \|_2  
	+ \|  \nabla_h \tilde{\phi}^m \|_2  )
	\cdot \nrm{ \nabla_h \Delta_h \tilde{\phi}_{I}^{m+1/2} }_2  \nonumber 
	\\
	&\le& 
	\frac{\varepsilon^2}{16\gamma}  \nrm{ \nabla_h \Delta_h \tilde{\phi}_{I}^{m+1/2} }_2^2 
	+  \frac{C_{26}}{\varepsilon^2}  (  \|  \nabla_h \tilde{\phi}^{m+1} \|_2^2 
	+ \|  \nabla_h \tilde{\phi}^m \|_2^2  ) ,  
	\label{CHHS-2nd-error-8-8}
	\end{eqnarray} 
	with $C_{26} = C_{27}C_{15}^2C_{24}^2\gamma( (M_0^m)^4  + (M_0^{m-1})^4 ) 
	\left(2C_{12}^4 +  (M_0^{m+1})^4  + (M_0^m)^4  \right) $.
	 
	The bound for the second and third terms appearing in (\ref{CHHS-2nd-error-8-7}) 
	follows form the proof of (\ref{CHHS-2nd-error-6-7})-(\ref{CHHS-2nd-error-6-10}). Hence, the following two estimates are available, and the details are skipped for simplicity of presentation: 
	\begin{eqnarray} 
	C_{25} \| \tilde{\phi}^m \|_{\infty}   
	\cdot \nrm{ \nabla_h \Delta_h \tilde{\phi}_{I}^{m+1/2} }_2
	&\le& C^{\alpha, \varepsilon,\gamma}_1  ( C_{25}^{8/3} + C_{25}^2 ) \| \nabla_h \tilde{\phi}^m \|_2^2  
	+ \frac{\varepsilon^2}{16\gamma} \nrm{ \nabla_h \Delta_h \tilde{\phi}_{I}^{m+1/2} }_2^2  
	\nonumber
	\\
	&&+ \frac{\varepsilon^2}{64\gamma}  \| \nabla_h \Delta_h \tilde{\phi}^m \|_2^2 + \frac{16\gamma(C_{14}C_{18})^2}{\varepsilon^2}h^4   ,   
	\label{CHHS-2nd-error-8-9-1}
	\\
	C_{25} \| \tilde{\phi}^{m+1} \|_{\infty}   
	\cdot \nrm{ \nabla_h \Delta_h \tilde{\phi}_{I}^{m+1/2} }_2
	&\le& C^{\alpha, \varepsilon,\gamma}_1  ( C_{25}^{8/3} + C_{25}^2 ) 
	\| \nabla_h \tilde{\phi}^{m+1} \|_2^2  
	+ \frac{\varepsilon^2}{16\gamma} \nrm{ \nabla_h \Delta_h \tilde{\phi}_{I}^{m+1/2} }_2^2
	\nonumber
	\\  
	&&+ \frac{\varepsilon^2}{64\gamma}  \| \nabla_h \Delta_h \tilde{\phi}^{m+1} \|_2^2  + \frac{16\gamma(C_{14}C_{18})^2}{\varepsilon^2}h^4  .  
	\label{CHHS-2nd-error-8-9-2}
	\end{eqnarray} 
	Going back to (\ref{CHHS-2nd-error-8-6})-(\ref{CHHS-2nd-error-8-7}), we arrive at
	\begin{eqnarray} 
	I_{4,4,2,1}&=&
	\left( A_h\phi_*^{m+1/2} \nabla_h \Delta_h \tilde{\phi}_{I}^{m+1/2} ,
	{\cal P}_h  \left(A_h \phi_*^{m+1/2} \nabla_h \left(  {\cal N}^{m+1/2}  \right)  \right)  \right)   \nonumber 
	\\
	&\le&
	\left( C^{\alpha, \varepsilon, \gamma}_1  ( C_{25}^{8/3} + C_{25}^2 )  
	+ C_{26} \varepsilon^{-2} \right) 
	(  \|  \nabla_h \tilde{\phi}^{m+1} \|_2^2 
	+ \|  \nabla_h \tilde{\phi}^m \|_2^2  )  + \frac{3 \varepsilon^2}{16\gamma} \nrm{ \nabla_h \Delta_h \tilde{\phi}_{I}^{m+1/2} }_2^2\nonumber 
	\\
	&&	   
	+ \frac{\varepsilon^2}{64\gamma}  ( \| \nabla_h \Delta_h \tilde{\phi}^m \|_2^2 
	+ \| \nabla_h \Delta_h \tilde{\phi}^{m+1} \|_2^2 ) + \frac{32\gamma(C_{14}C_{18})^2}{\varepsilon^2}h^4 .
	\label{CHHS-2nd-error-8-10}
	\end{eqnarray} 
	Note that $C_{26}$ and $C_{25}^{8/3}$ are involved with $(M_0^{m+1})^8$, $(M_0^m)^8$ and $(M_0^{m-1})^8$. 
	Consequently, a combination of 
	(\ref{CHHS-2nd-error-8-1})-(\ref{CHHS-2nd-error-8-5}) 
	and (\ref{CHHS-2nd-error-8-10}) shows that 
	\begin{eqnarray} 
	  I_{4,4}&=& 
	- \left( A_h\nabla_h \Delta_h \tilde{\phi}_{I}^{m+1/2} , 
	A_h\phi_*^{m+1/2} \tilde{\u}^{m+1/2}  \right)   \nonumber 
	\\
	&\le& 
	\left( C^{\alpha, \varepsilon,\gamma}_2  ( C_{25}^{8/3} + C_{25}^2 )  
	+ C_{26} \varepsilon^{-2} \right) 
	(  \|  \nabla_h \tilde{\phi}^{m+1} \|_2^2 
	+ \|  \nabla_h \tilde{\phi}^m \|_2^2  
	+ \|  \nabla_h \tilde{\phi}^{m-1} \|_2^2 )  \nonumber 
	\\
	&+& \frac{5 \varepsilon^2}{16} \nrm{ \nabla_h \Delta_h \tilde{\phi}_{I}^{m+1/2} }_2^2   
	+ \frac{\varepsilon^2}{64}  ( \| \nabla_h \Delta_h \tilde{\phi}^m \|_2^2 
	+ \| \nabla_h \Delta_h \tilde{\phi}^{m+1} \|_2^2 )
	+ \frac{32\gamma^2(C_{14}C_{18})^2}{\varepsilon^2}h^4 ,  
	\label{CHHS-2nd-error-8-11}
	\end{eqnarray} 
	with $C_{27} = C^{\alpha, \varepsilon,\gamma}_3  ( (M_0^m)^4  + (M_0^{m-1})^4 +1 ) 
	(2C_{12}^4 +  (M_0^{m+1})^4  + (M_0^m)^4  +1 ) $. 
	
	Consequently, from (\ref{CHHS-2nd-error-4}), (\ref{CHHS-2nd-error-5}), 
	(\ref{CHHS-2nd-error-6-11}), (\ref{CHHS-2nd-error-7}) and 
	(\ref{CHHS-2nd-error-8-11}), we obtain 
	\begin{eqnarray}
	&&
	\| \nabla_h \tilde{\phi}^{m+1}  \|_2^2 
	- \| \nabla_h \tilde{\phi}^m \|_2^2 
	+ \frac14 ( \| \nabla_h ( \tilde{\phi}^{m+1} - \tilde{\phi}^m )  \|_2^2 
	- \| \nabla_h ( \tilde{\phi}^m - \tilde{\phi}^{m-1} )  \|_2^2 )  
	+ \frac58 \varepsilon^2 s \nrm{ \nabla_h \Delta_h \tilde{\phi}_{I}^{m+1/2} }_2^2  \nonumber 
	\\
	&\le& 
	\frac{\varepsilon^2 s}{16}  ( \| \nabla_h \Delta_h \tilde{\phi}^m \|_2^2 
	+ \| \nabla_h \Delta_h \tilde{\phi}^{m+1} \|_2^2 ) 
	+ 2s \| \tau^{m+1/2} \|_2^2 + \frac{64(1+\gamma^2)(C_{14}C_{18})^2}{\varepsilon^2}sh^4 \nonumber 
	\\
	&& 
	+  s\left( C^{\alpha, \varepsilon,\gamma}_4  ( C_{25}^{8/3} + C_{25}^2 +1 )  
	+ C_{27} \varepsilon^{-2}  \right) 
	(  \|  \nabla_h \tilde{\phi}^{m+1} \|_2^2 
	+ \|  \nabla_h \tilde{\phi}^m \|_2^2  
	+ \|  \nabla_h \tilde{\phi}^{m-1} \|_2^2 ).
	\label{CHHS-2nd-error-9-1}
	\end{eqnarray} 
	On the other hand, a similar estimate as (\ref{estimate-mu-3}) could be 
	carried out: 
	\begin{eqnarray}  
	\| \nabla_h \Delta_h \tilde{\phi}_{I}^{m+1/2} \|_2^2 
	= \nrm{\nabla_h \Delta_h ( \frac34 \tilde{\phi}^{m+1} + \frac14 \tilde{\phi}^{m-1} )}_2^2
	\ge \frac38 \| \nabla_h \Delta_h \tilde{\phi}^{m+1} \|_2^2 
	- \frac18 \| \nabla_h \Delta_h \tilde{\phi}^{m-1} \|_2^2  . 
	\label{CHHS-2nd-error-9-2}
	\end{eqnarray} 
	Then we get 
	\begin{eqnarray}
	&&
	\| \nabla_h \tilde{\phi}^{m+1}  \|_2^2 
	- \| \nabla_h \tilde{\phi}^m \|_2^2 
	+ \frac14 ( \| \nabla_h ( \tilde{\phi}^{m+1} - \tilde{\phi}^m )  \|_2^2 
	- \| \nabla_h ( \tilde{\phi}^m - \tilde{\phi}^{m-1} )  \|_2^2 )  
	+ \frac{11}{64} \varepsilon^2 s \| \nabla_h \Delta_h \tilde{\phi}^{m+1} \|_2^2  \nonumber 
	\\
	&\le& 
	\frac{\varepsilon^2 s}{16}  \| \nabla_h \Delta_h \tilde{\phi}^m \|_2^2  
	+ \frac{5 \varepsilon^2 s}{64}  \| \nabla_h \Delta_h \tilde{\phi}^{m-1} \|_2^2    
	+ s \| \tau^{m+1/2} \|_2^2 + \frac{64(1+\gamma^2)(C_{14}C_{18})^2}sh^4 \nonumber 
	\\
	&& 
	+  s\left( C^{\alpha, \varepsilon,\gamma}_4  ( C_{25}^{8/3} + C_{25}^2 )  
	+ C_{27} \varepsilon^{-2}  \right) 
	(  \|  \nabla_h \tilde{\phi}^{m+1} \|_2^2 
	+ \|  \nabla_h \tilde{\phi}^m \|_2^2  
	+ \|  \nabla_h \tilde{\phi}^{m-1} \|_2^2 ) . 
	\label{CHHS-2nd-error-9-3}
	\end{eqnarray}

	For the sake of convenience, we now make the coefficient on the right side of \eqref{CHHS-2nd-error-9-3} explicit to each time step. Define
	\begin{equation}
	I_2 = C^{\alpha, \varepsilon,\gamma}_4  ( C_{25}^{8/3} + C_{25}^2)  
	+ C_{27} \varepsilon^{-2}. 
	\end{equation}
	Apply Young's inequality on $C_{25}^2$, we get
	\begin{eqnarray}
	 C_{25}^2 \le \frac34 C_{25}^{8/3} + \frac14.
	\end{eqnarray}
	Then $I_2$ can be bounded as
	\begin{eqnarray}
	I_2  \le C^{\alpha, \varepsilon,\gamma}_4 ( \frac74 C_{25}^{8/3} + \frac54 )  
	+ \varepsilon^{-2}C_8 \le 2 C^{\alpha, \varepsilon,\gamma}_4  (C_{25}^{8/3} + 1) + C_{27}\varepsilon^{-2}.
	\end{eqnarray}
	Recall the definition of $C_{25}$ in \eqref{Def-C25}, the value of which can be controlled as
	\begin{eqnarray}
	C_7 \le C_{24}\max(C_{16},C_{17})\cdot((M_0^m)^2 + (M_0^{m-1})^2)(M_0^{m+1} + M_0^m + 1).
	\end{eqnarray}
	With the application of the following inequality
	\begin{eqnarray}
     (a + b)^p \le 2^{p-1}(a^p + b^p), \quad {\rm for} \,\, \forall p \ge 1,
	\end{eqnarray}
	the value of $C_{25}^{8/3}$ can be bounded as
	\begin{eqnarray}
	C_{25}^{8/3} \le 8^{5/3}C_{24}^{8/3}(\max(C_{16},C_{17}))^{8/3}((M_0^m)^{16/3} + (M_0^{m-1})^{16/3})((M_0^{m+1})^{8/3} + (M_0^m)^{8/3} + 1).
	\end{eqnarray}
	As a result, the stability inequality \eqref{CHHS-2nd-error-9-3} can be rewritten as \eqref{eq:stable result}, with the following constants: 
	\begin{eqnarray}
	   C_{28} &=& \frac{64(1+\gamma^2)(C_{14}  \max(C_{16},C_{17}))^2}{\varepsilon^2}, 
	   \label{CHHS-2nd-error-9-4-1}	   
	   \\
       C_{29} &=& 2C^{\alpha,\varepsilon,\gamma}_4\cdot\max\left(8^{5/3}C_{24}(\max(C_{16},C_{17}))^{8/3}, 1\right),  \label{CHHS-2nd-error-9-4-2}  
       \\
       C_{30} &=& C(2C_{12}^4 +1)\varepsilon^{-2}.   \label{CHHS-2nd-error-9-4-3}       
       \end{eqnarray} 
This finishes the proof of Theorem~\ref{thm:stability}. 
\end{proof} 
	
    \subsection{The result of an \emph{a-priori} error assumption}
    \label{subsec:a-priori}	
    As discussed in \cite{chen2015efficient}, in which a first order numerical scheme was analyzed, the discrete Gronwall inequality could not be directly applied to derive an error estimate from the stability inequality as in the form of~\eqref{eq:stable result}, since $D_1^{m+1}$ and $D_2^{m+1}$ do not have a uniform bound. Instead, we have to use an induction argument to establish the convergence analysis.  Specifically, we assume, as an induction hypothesis, that the desired error estimate holds at an arbitrary time step $m$ ($0 \le m \le M-1$).  We then use this \emph{a priori} assumption to prove that $s(C_{29}D_1^{m+1} + C_{30}D_2^{m+1}) <1$, provided $s$ is small enough.  Then we conclude the induction argument by proving that the error estimate holds at the updated time step $m+1$.
    
    First, we need the following technical result, which is a direct result of Young's inequality. The proof is skipped for brevity.
    
    \begin{lem}
    	For any $a>0$, $\delta>0$ and $0 < q < 8$, we have
    	\begin{equation}
    		a \cdot \delta^q  \le  b \delta^8 + r(a,b,q) ,  \quad \forall \ b > 0 , \qquad  \mbox{where} \qquad  r(a,b,q) := \frac{a^{\frac{8}{8-q}}}{\frac{8}{8-q}\left(b\cdot\frac{8}{q}\right)^{\frac{q}{8-q}}} . 
    		\label{Young-1}
    	\end{equation}
    \end{lem}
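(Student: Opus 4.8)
The plan is to obtain this as a direct consequence of the scalar Young inequality with a suitably chosen pair of conjugate exponents, so that the stated constant $r(a,b,q)$ comes out exactly rather than merely up to a harmless factor. First I would set $p := \frac{8}{8-q}$ and $p' := \frac{8}{q}$. Since $0<q<8$, both exponents are positive and lie in $(1,\infty)$, and one checks immediately that $\frac1p+\frac1{p'} = \frac{8-q}{8}+\frac{q}{8} = 1$, so they are indeed Hölder conjugates.

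Next I would split the product as $a\cdot\delta^q = X\cdot Y$ with the specific choices
\[
Y := \Bigl( b\cdot\tfrac8q \Bigr)^{q/8}\,\delta^q , \qquad
X := \frac{a}{\bigl( b\cdot\tfrac8q \bigr)^{q/8}} ,
\]
so that the $\delta$-dependence is concentrated entirely in $Y$. With this choice one has $Y^{p'} = \bigl( b\cdot\tfrac8q \bigr)\,\delta^8$ (because $\frac{q}{8}\cdot p' = 1$ and $q\cdot p' = 8$), hence $\frac{Y^{p'}}{p'} = \frac{q}{8}\cdot b\cdot\frac8q\cdot\delta^8 = b\,\delta^8$. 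Applying $XY\le \frac{X^p}{p}+\frac{Y^{p'}}{p'}$ therefore yields $a\,\delta^q \le \frac{X^p}{p} + b\,\delta^8$ for every $b>0$.

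It then only remains to identify $\frac{X^p}{p}$ with $r(a,b,q)$. Since $p = \frac{8}{8-q}$ forces $\frac{pq}{8} = \frac{q}{8-q}$ and $X^p = a^{8/(8-q)}\,\bigl( b\cdot\tfrac8q\bigr)^{-q/(8-q)}$, dividing by $p = \frac{8}{8-q}$ reproduces exactly the expression defining $r(a,b,q)$ in \eqref{Young-1}. This completes the argument. The only thing to watch is the exponent bookkeeping — verifying $1/p+1/p'=1$, the identities $\frac{q}{8}p'=1$ and $\frac{pq}{8}=\frac{q}{8-q}$ — and there is no analytic subtlety whatsoever; one could equally well invoke the standard $\varepsilon$-Young inequality $xy\le \varepsilon x^p + C(\varepsilon)y^{p'}$ and optimize in $\varepsilon$, but the explicit substitution above delivers the precise constant with less effort.
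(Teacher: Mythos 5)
Your argument is correct: the exponents $p=\frac{8}{8-q}$, $p'=\frac{8}{q}$ are genuine H\"older conjugates, and your splitting reproduces the constant $r(a,b,q)$ exactly. The paper itself omits the proof, stating only that the lemma is ``a direct result of Young's inequality,'' which is precisely the route you take, so your proposal matches the intended argument.
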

    We also need the following estimate of the $\| \cdot \|_{\infty}$ norm of $\phi^m$.
    \begin{lem}
    \label{lem:Linfty est}
    For any $s, h>0$ and any $1\le \ell\le M$, there exists a constant $C_{31}>0$  such that
    \begin{eqnarray}
     s\sum_{m=1}^{\ell}  \nrm{ \phi^m }_{\infty}^8 &\leq& C_{31} (t_\ell+1) \le C_{31}\left( T + 1 \right),
     \label{L8 Linf stab-1}
    \end{eqnarray}
    where $t_\ell := s\cdot \ell$, and $T := s\cdot M$.
    \end{lem}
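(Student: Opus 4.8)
The plan is to obtain the claimed $\ell^8$-in-time accumulation bound on $\nrm{\phi^m}_\infty$ directly from the discrete Gagliardo--Nirenberg estimate (\ref{maximum bound}) together with the $\ell^2(0,T;H_h^3)$ stability bound of Theorem~\ref{thm:stability L2-H3}. First I would recall that (\ref{maximum bound}), combined with the uniform $\ell^\infty(0,T;H_h^1)$ bound (\ref{H^1-stability}), gives for every $m$
\[
\nrm{\phi^m}_\infty \le C_6 C_2^{3/4}\nrm{\nabla_h\Delta_h\phi^m}_2^{1/4} + C_6 C_2 .
\]

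The crucial point is that the exponent $8$ in (\ref{L8 Linf stab-1}) is tailored to the $1/4$ power appearing here: raising the estimate above to the eighth power and using $(a+b)^8\le 2^7(a^8+b^8)$ converts $\nrm{\nabla_h\Delta_h\phi^m}_2^{1/4}$ into exactly $\nrm{\nabla_h\Delta_h\phi^m}_2^2$, namely
\[
\nrm{\phi^m}_\infty^8 \le 2^7 C_6^8 C_2^6 \, \nrm{\nabla_h\Delta_h\phi^m}_2^2 + 2^7 C_6^8 C_2^8 .
\]
Multiplying by $s$, summing over $1\le m\le\ell$, and recalling $s\ell=t_\ell$, I would then arrive at
\[
s\sum_{m=1}^{\ell}\nrm{\phi^m}_\infty^8 \le 2^7 C_6^8 C_2^6 \, s\sum_{m=1}^{\ell}\nrm{\nabla_h\Delta_h\phi^m}_2^2 + 2^7 C_6^8 C_2^8 \, t_\ell .
\]

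To close the argument I would invoke the stability estimate (\ref{eqn:C11})--(\ref{(25)}), which (since $s\ell=t_\ell$) yields $s\sum_{m=1}^{\ell}\nrm{\nabla_h\Delta_h\phi^m}_2^2 \le 16\varepsilon^{-4}(C_{11}+C_{10}t_\ell)$. Substituting this and collecting all the constants---each of $C_2$, $C_6$, $C_{10}$, $C_{11}$ being independent of $s$ and $h$---into a single
\[
C_{31} := \max\Bigl\{ 2^{11}\varepsilon^{-4} C_6^8 C_2^6 C_{11},\ 2^{11}\varepsilon^{-4} C_6^8 C_2^6 C_{10} + 2^7 C_6^8 C_2^8 \Bigr\}
\]
produces $s\sum_{m=1}^{\ell}\nrm{\phi^m}_\infty^8 \le C_{31}(t_\ell+1)\le C_{31}(T+1)$, as claimed. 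There is no genuine obstacle in this lemma; the only subtlety---and the reason the result is phrased for the eighth power---is the matching of the interpolation exponent to the power that the $\ell^2(0,T;H_h^3)$ bound can absorb, so that a lower exponent (e.g.\ an $\ell^4$-in-time accumulation) would not be controllable by the available a-priori estimates.
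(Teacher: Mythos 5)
Your proposal is correct and follows exactly the route the paper takes: its proof of this lemma is a one-line appeal to the discrete Gagliardo--Nirenberg inequality \eqref{maximum bound}, the uniform $H_h^1$ bound \eqref{H^1-stability}, and the $\ell^2(0,T;H_h^3)$ estimate \eqref{stab-L2-H3-0}, which is precisely the chain you spell out. Your explicit eighth-power computation and tracking of constants simply fills in the details the authors omit.
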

    
\begin{proof} 
Inequality \eqref{L8 Linf stab-1} is a direct consequence of the discrete Gagliardo-Nirenberg type inequality \eqref{maximum bound}, combined with the leading order $H_h^1$ bound \eqref{H^1-stability} and the $\ell^2 (0,T; H_h^3)$ estimate \eqref{stab-L2-H3-0} (in Theorem~\ref{thm:stability L2-H3}) for the numerical solution. 
\end{proof} 
    
    \begin{thm}
    	Suppose that $h$ and $s$ are sufficiently small and the following error estimate is valid up to the time step $t_m:=m\cdot s$, for $2\le m \le M-1$:
    	\begin{eqnarray}
    		\nrm{ \nabla_h \tilde{\phi}^m }_2^2 +  \varepsilon^2 s \sum_{j=1}^m \nrm{ \nabla_h \Delta_h  \tilde{\phi}^j }_2^2  \le  C_{32} \exp\left(C_{33} (t_m +1) \right)  \left( s^4 + h^4  \right) ,
    		\label{a priori-1}
    	\end{eqnarray}
    	where $C_{32}, C_{33}>0$ may depend upon the final time $T$ but are independent of $s$ and $h$. Then
    	\begin{eqnarray}
    		s(C_{29}D_1^{m+1} + C_{30}D_2^{m+1}) \le \frac12 .
    		\label{convergence-bound-14}
    	\end{eqnarray}
    \end{thm}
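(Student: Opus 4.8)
The plan is to reduce the claim to controlling the three discrete maximum norms $M_0^{m-1}=\nrm{\phi^{m-1}}_\infty$, $M_0^m=\nrm{\phi^m}_\infty$ and $M_0^{m+1}=\nrm{\phi^{m+1}}_\infty$ that enter $D_1^{m+1}$ and $D_2^{m+1}$ through \eqref{Def-D1}--\eqref{Def-D2}. The crucial structural feature, already built into Theorem~\ref{thm:stability}, is that $D_1^{m+1}$ and $D_2^{m+1}$ are polynomials of total degree at most $8$ in these quantities, and that every degree-$8$ monomial involves \emph{only} the two ``past'' values $M_0^{m-1}$ and $M_0^m$; the ``future'' value $M_0^{m+1}$ appears with exponent at most $8/3$ in $D_1^{m+1}$ and at most $4$ in $D_2^{m+1}$. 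Accordingly I would (i) deduce an $O(1)$ bound for $M_0^{m-1}$ and $M_0^m$ from the induction hypothesis \eqref{a priori-1}, (ii) deduce the crude bound $M_0^{m+1}=O(s^{-1/8})$ from the unconditional $\ell^2(0,T;H_h^3)$ stability, and then (iii) observe that $s\,(M_0^{m+1})^{8/3}=O(s^{2/3})$ and $s\,(M_0^{m+1})^{4}=O(s^{1/2})$ both vanish as $s\to0$, which (since $C_{29}$ and $C_{30}$ are independent of $s,h$) forces \eqref{convergence-bound-14} for $s$ small.

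\medskip
\emph{Step 1.} Write $\phi^k=\Phi^k-\tilde{\phi}^k$, so $M_0^k\le\nrm{\Phi^k}_\infty+\nrm{\tilde{\phi}^k}_\infty\le C_{12}+\nrm{\tilde{\phi}^k}_\infty$ by the regularity assumption \eqref{CHHS-regularity-1}. Applying the discrete Gagliardo--Nirenberg estimate \eqref{convergence inf est-1} of Lemma~\ref{lemma:prelim est} to $\tilde{\phi}^k$ for $k=m-1,m$, and invoking the induction hypothesis \eqref{a priori-1} (which holds at all steps up to $m$, giving both $\nrm{\nabla_h\tilde{\phi}^k}_2=O(s^2+h^2)$ and, from the summed term, $\nrm{\nabla_h\Delta_h\tilde{\phi}^k}_2=O(s^{-1/2}(s^2+h^2))$), one obtains
\begin{equation}
\nrm{\tilde{\phi}^k}_\infty\le C\bigl(s^{-1/8}(s^2+h^2)+(s^2+h^2)+h^2\bigr)\to0\quad(s,h\to0),\qquad k=m-1,m,
\end{equation}
with $C$ depending on $\varepsilon$, $C_{14}$, $C_{32}$, $C_{33}$ and $T$. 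Hence, for $s,h$ sufficiently small, $M_0^{m-1}\le C_{12}+1=:\tilde C$ and $M_0^m\le\tilde C$. (When $m=2$ this is immediate since $\tilde{\phi}^1\equiv0$.)

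\medskip
\emph{Steps 2 and 3.} Since $m+1\le M$, Lemma~\ref{lem:Linfty est} (equivalently, the $\ell^2(0,T;H_h^3)$ bound of Theorem~\ref{thm:stability L2-H3} together with the maximum-norm estimate \eqref{maximum bound}) gives $s\,(M_0^{m+1})^{8}\le s\sum_{j=1}^{m+1}\nrm{\phi^j}_\infty^{8}\le C_{31}(T+1)$, so $M_0^{m+1}\le(C_{31}(T+1))^{1/8}s^{-1/8}$. Substituting $M_0^{m-1},M_0^m\le\tilde C$ and this bound into \eqref{Def-D1}--\eqref{Def-D2}, every monomial of $sD_1^{m+1}$ is either $s\cdot O(1)$ (those free of $M_0^{m+1}$) or $s\cdot O(1)\cdot(s^{-1/8})^{8/3}=O(s^{2/3})$, and every monomial of $sD_2^{m+1}$ is either $s\cdot O(1)$ or $s\cdot O(1)\cdot(s^{-1/8})^{4}=O(s^{1/2})$; therefore $s(C_{29}D_1^{m+1}+C_{30}D_2^{m+1})=O(s^{1/2})$, which is $\le\frac12$ once $s$ is small (with $h$ small enough that Step~1 applies).

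\medskip
The genuinely delicate point is the crude estimate in Step~2: the ``future'' norm $M_0^{m+1}$ cannot be bounded at $O(1)$ --- only the much weaker $O(s^{-1/8})$ estimate is available without already knowing the error bound at step $m+1$ --- so the whole argument hinges on the fact that Theorem~\ref{thm:stability} was arranged so that $M_0^{m+1}$ enters $D_1^{m+1}$ and $D_2^{m+1}$ only with exponents ($8/3$ and $4$) strictly below the critical value $8$ dictated by Lemma~\ref{lem:Linfty est}. Everything else is routine Young/Cauchy bookkeeping.
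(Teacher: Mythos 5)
Your overall strategy---control the two ``past'' maximum norms through the induction hypothesis and the ``future'' one through the $\ell^2(0,T;H_h^3)$ stability, then exploit the fact that $M_0^{m+1}$ enters $D_1^{m+1}$, $D_2^{m+1}$ only with sub-critical exponents---is the same skeleton as the paper's proof, and your Steps 2--3 are a clean repackaging of the paper's Young-inequality bookkeeping: the pointwise bound $M_0^{m+1}\le (C_{31}(T+1))^{1/8}s^{-1/8}$ is exactly Lemma~\ref{lem:Linfty est} read off at the single index $m+1$, and $s\cdot O(1)\cdot s^{-1/3}$, $s\cdot O(1)\cdot s^{-1/2}$ give the right orders. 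The problem is Step 1, and it is a genuine gap. From \eqref{a priori-1} you correctly get $\nrm{\nabla_h\tilde{\phi}^k}_2=O(s^2+h^2)$ and $\nrm{\nabla_h\Delta_h\tilde{\phi}^k}_2=O(s^{-1/2}(s^2+h^2))$, so the Gagliardo--Nirenberg estimate \eqref{convergence inf est-1} yields $\nrm{\tilde{\phi}^k}_\infty\le C\bigl(s^{15/8}+s^{-1/8}h^2+h^2\bigr)$. The term $s^{-1/8}h^2$ does \emph{not} tend to zero, nor even stay bounded, as $s,h\to 0$ independently: take $s=h^{100}$ and it equals $h^{-21/2}\to\infty$. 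Hence the conclusion $M_0^{m-1},M_0^m\le\tilde C$ requires an unstated mesh-ratio restriction of the form $h\lesssim s^{1/16}$, which is not among the hypotheses of the theorem (``$h$ and $s$ sufficiently small'', with no coupling), and all of Step~3 rests on it.

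The paper's proof is arranged precisely to avoid this. It never claims $M_0^m=O(1)$; it keeps the bad ratio, obtaining $(M_0^m)^2\le C_{37}(1+h^4s^{-1/4})$ as in \eqref{convergence-bound-7}, hence $(M_0^m)^8\le 8C_{37}^4(1+h^{16}s^{-1})$, and then lets the prefactor $s$ in $sD_i^{m+1}$ do the work: $s(M_0^m)^8\le 8C_{37}^4(s+h^{16})$, which is small under separate smallness of $s$ and $h$. The mixed monomials of group $\mathcal{G}_2$ are treated the same way, the leftover $h^8s^{-1/2}(M_0^{m+1})^4$ being absorbed by Cauchy's inequality and the $L^8_s$ bound, at the price only of the grid constraint \eqref{constraint-h-2}. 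Your argument can be repaired along exactly these lines---carry the factor $1+h^2s^{-1/8}$ through instead of replacing it by a constant, and verify that each monomial of $s(C_{29}D_1^{m+1}+C_{30}D_2^{m+1})$ still comes out $O(s^{1/2}+s^{2/3}+h^{16}+\cdots)$---but as written Step 1 fails. (A minor side remark: your opening claim that every degree-$8$ monomial involves only past values is not literally true, e.g.\ $(M_0^m)^{16/3}(M_0^{m+1})^{8/3}$ has total degree $8$; what your Step 3 actually uses, correctly, is only that the exponent of $M_0^{m+1}$ never exceeds $4<8$.)
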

    
    \begin{proof}
    	As an application of \eqref{Young-1}, the non-leading terms appearing on the right hand side of \eqref{Def-D1} for the expansion of $D_1^{m+1}$ can be
    	bounded as follows: 
    	\begin{eqnarray}
    		(M_0^m)^{16/3} &\le&  \frac{1}{52 C_{29} C_{31}(T+1) } (M_0^m)^8  +  C_{32} ,
    		\label{convergence-bound-3-1}
    		\\
    		(M_0^{m-1})^{16/3} &\le&  \frac{1}{52 C_{29} C_{31}(T+1) } (M_0^{m-1})^8  +  C_{33}.
    		\label{convergence-bound-3-2}
    	\end{eqnarray}
    	Then we get, for any $0\le m\le M-1$,
    	\begin{eqnarray}
    		\label{convergence-bound-3-6}
    		s  C_{29} \left( (M_0^m)^{16/3} + (M_0^{m-1})^{16/3}  + 1 \right) &\le& \frac{s}{52 C_{31}(T+1) } (M_0^m)^8 + \frac{s}{52 C_{31}(T+1) } (M_0^{m-1})^8 + s C_{34} 
    		\nonumber
    		\\
    		&\le& \frac{1}{26} + s C_{34},
    	\end{eqnarray}
    	using the $L^8_s (0,T)$ bound for $M_0^m := \nrm{\phi^m}_\infty$ in (\ref{L8 Linf stab-1}), where $C_{34}>0$ is a constant independent of $h$ and $s$. Using the same skill, the non-leading order of $D_2^{m+1}$ can be bounded as follows
    	\begin{eqnarray}
    	\label{convergence-bound-3-6-2}
    	  s  C_{30} \left((M_0^{m+1})^4 + 2(M_0^m)^4 + (M_0^{m-1})^4  + 1 \right) \le \frac{3}{52} + s C_{35},
    	\end{eqnarray}
    	where $C_{18}>0$ is a constant that is independent of $h$ and $s$.
    	
    	Now, the leading terms appearing on the right hand side of \eqref{Def-D1}--\eqref{Def-D2} cannot be bounded in this way. We divide them into two groups $\mathcal{G}_1$ and $\mathcal{G}_2$ as follows:
    	\begin{eqnarray}
    	\mathcal{G}_1 &:& (M_0^m)^8,  (M_0^m)^{8/3}(M_0^{m-1})^{16/3}, (M_0^m)^4(M_0^{m-1})^4,
    	\\
        \mathcal{G}_2 &:& (M_0^{m+1})^{8/3}(M_0^m)^{16/3}, (M_0^{m+1})^{8/3}(M_0^{m-1})^{16/3}, (M_0^{m+1})^4(M_0^m)^4, (M_0^{m+1})^4(M_0^{m-1})^4.
    	\end{eqnarray}    	    	
    	We must, therefore, rely upon \eqref{a priori-1}.  This bound implies
    	\begin{equation}
    		\label{convergence-bound-4}
    		\begin{split}
    			\nrm{ \nabla_h \tilde{\phi}^m }_2^2 &\le  C_{32} \exp\left(C_{33} (T +1)\right)  \left( s^4 + h^4  \right) ,
    			\\
    			\nrm{ \nabla_h \Delta_h  \tilde{\phi}^m }_2^2 &\le \varepsilon^{-2} C_{32} \exp\left(C_{33}
    			( T +1 ) \right) \left( s^4 + h^4\right)s^{-1}   .
    		\end{split}
    	\end{equation}
    	Using (\ref{convergence inf est-1}) and setting $C_{36} := C_{32} \exp\left(C_{33} (T +1) \right)$, we have
    	\begin{eqnarray}
    		\nrm{ \tilde{\phi}^m }_{\infty}^2 &\le& 4C_{14}^2 \left( \nrm{ \nabla_h \tilde{\phi}^m }_2^{\frac32} \nrm{ \nabla_h \Delta_h \tilde{\phi}^m }_2^{\frac12} + \nrm{ \nabla_h \tilde{\phi}^m }_2^2 + h^4 \right)
    		\nonumber
    		\\
    		&\le&  4C_{14}^2 \left\{ C_{36} \left( s^4 + h^4  \right)\left(\varepsilon^{-1/2}s^{-1/4}+1\right) + h^4 \right\}
    		\nonumber
    		\\
    		&=& 4C_{14}^2 \left\{ C_{36}\varepsilon^{-1/2} s^{15/4} +C_{36}\varepsilon^{-1/2} h^4s^{-1/4} +C_{36}s^4 +\left(1+C_{36}\right)h^4 \right\}  .
    		\label{convergence-bound-5}
    	\end{eqnarray}
    	Under the time and space step size constraint
    	\begin{eqnarray}
    		C_{36}\varepsilon^{-1/2} s^{15/4} +C_{36}s^4 +\left(1+C_{36}\right)h^4  \le \frac{1}{4C_{14}^2} ,
    		\label{constraint-s h-1}
    	\end{eqnarray}
    	the following bound is available:
    	\begin{eqnarray}
    		\nrm{ \tilde{\phi}^m }_{\infty}^2 \le 1 + 4C_{14}^2C_{36}\varepsilon^{-1/2} \frac{ h^4 }{s^{1/4}}  .
    		\label{convergence-bound-6}
    	\end{eqnarray}
    	Consequently, we see that
    	\begin{eqnarray}
    		(M_0^m)^2 &:=& \nrm{ \phi^m }_{\infty}^2 \le 2\nrm{\Phi^m}_{\infty}^2 + 2\nrm{ \tilde{\phi}^m }_{\infty}^2 \le C_{37}\left(1 + \frac{h^4}{s^{1/4}}\right) ,
    		\label{convergence-bound-7}
    		\\
    		(M_0^{m-1})^2 &:=& \nrm{ \phi^{m-1} }_{\infty}^2 \le 2\nrm{\Phi^{m-1}}_{\infty}^2 + 2\nrm{ \tilde{\phi}^{m-1} }_{\infty}^2 \le C_{37}\left(1 + \frac{h^4}{s^{1/4}}\right) ,
    		\label{convergence-bound-7-2}
    	\end{eqnarray}
    	where $C_{37}>0$ is independent of $s$ and $h$, but it does depend upon the final time $T$ (at least exponentially) and the interface parameter $\varepsilon$ $(\mathcal{O}(\varepsilon^{-1/2}))$. This shows that 
    	\begin{eqnarray}
    	 (M_0^m)^8 \le C_{37}^4\left(1 + \frac{h^4}{s^{1/4}}\right)^4 \le 8 C_{37}^4(1 + \frac{h^{16}}{s}),
    	\end{eqnarray}
    	the bound of which is also valid for other terms in group $\mathcal{G}_1$. Thus, under the time and space step size constraint
    	\begin{eqnarray}
    	 8 C_{37}^4(s + h^{16}) \le \frac{1}{52C_{29}},
    	\end{eqnarray}
    	the following bounds are available:
    	\begin{eqnarray}
    	  s  C_{29}\left((M_0^m)^8 + (M_0^m)^{8/3}(M_0^{m-1})^{16/3}\right) &\le& \frac{1}{26},
    	  \label{convergence-bound-26-1}
    	  \\
    	  s  C_{30} \left((M_0^m)^8 + (M_0^m)^4(M_0^{m-1})^4\right) &\le& \frac{1}{26}.
    	  \label{convergence-bound-26-2}
    	\end{eqnarray}

    	Now we estimate terms in group $\mathcal{G}_2$. We take $(M_0^{m+1})^4(M_0^m)^4$ as example. Reusing the estimate \eqref{convergence-bound-7}--\eqref{convergence-bound-7-2} leads to
    	\begin{eqnarray}
    		(M_0^{m+1})^4(M_0^m)^4 \le  C_{37}^2 \left(1+ \frac{h^4}{s^{1/4}}  \right)^2 (M_0^{m+1})^4 
    		\le  2 C_{37}^2(M_0^{m+1})^4 + 2 C_{37}^2\frac{h^8}{s^{1/2}}(M_0^{m+1})^4 .
    		\label{convergence-bound-8}
    	\end{eqnarray}
    	The first term on the right hand side can be handled in the same way
    	as (\ref{convergence-bound-3-1}):
    	\begin{eqnarray}
    		2 C_{37}^2 (M_0^{m+1})^4 \le  \frac{1}{104 C_{30}C_{31}(T+1) } (M_0^{m+1})^8  +  C_{38}.
    	\end{eqnarray}
    	Hence
    	\begin{equation}
    		s C_{30} \left(2 C_{37}^2 (M_0^{m+1})^4\right)  \le  \frac{1}{104}  +  sC_{38},
    		\label{convergence-bound-9}
    	\end{equation}
    	where $C_{38}>0$ is independent of $s$ and $h$. The second term on the right hand side of (\ref{convergence-bound-8})
    	can be analyzed as follows: using Cauchy's inequality and (\ref{L8 Linf stab-1}), we have
    	\begin{eqnarray}
    		s C_{30} \left( 2 C_{37}^2 \frac{ h^8 }{s^{1/2}} (M_0^{m+1})^4  \right) &\le&   C_{30}  C_{37}^2 h^8 \left(  s  (M_0^{m+1})^8 + 1 \right),
    		\nonumber
    		\\
    		&\le&  C_{30} C_{37}^2 C_4(T+1) h^8 + C_{30}  C_{37}^2 h^8  .
    		\label{convergence-bound-10}
    	\end{eqnarray}
    	Under an additional constraint for the grid size
    	\begin{eqnarray}
    		h^8 \le  \min  \left(   \frac{1}{208 C_{30} C_{37}^2 C_{31}(T+1) }    ,     
    		\frac{1}{208 C_{30} C_{37}^2}    \right) ,
    		\label{constraint-h-2}
    	\end{eqnarray}
    	we arrive at
    	\begin{eqnarray}
    		s C_{30} \left( 2 C_{37}^2 \frac{ h^8 }{s^{1/2}} (M_0^{m+1})^4  \right) \le \frac{1}{104} .
    		\label{convergence-bound-11}
    	\end{eqnarray}
    	
    	A combination of (\ref{convergence-bound-8}), (\ref{convergence-bound-9}) and (\ref{convergence-bound-11}) yields
    	\begin{eqnarray}
    		s C_{30} (M_0^{m+1})^4(M_0^m)^4   \le \frac{1}{52} + s C_{38} .
    	\end{eqnarray}
    	A similar analysis can be applied to all the other terms in group $\mathcal{G}_2$:  under a similar constraints as given by (\ref{constraint-h-2}), we have
    	\begin{eqnarray}
    	s C_{29} \left((M_0^{m+1})^{8/3}(M_0^m)^{16/3} + (M_0^{m+1})^{8/3}(M_0^{m-1})^{16/3}\right) &\le& \frac{1}{26} + s C_{39} ,
    	\label{convergence-bound-12-1}
    	\\
    	s C_{30} \left((M_0^{m+1})^4(M_0^m)^4 +  (M_0^{m+1})^4(M_0^{m-1})^4\right) &\le& \frac{1}{26} + s C_{39} ,
    	\label{convergence-bound-12-2}
    	\end{eqnarray}
    	where $C_{39}>0$ is independent of $s$ and $h$. The details of the proof are skipped for the sake of brevity.
    	
    	Therefore, a combination of \eqref{convergence-bound-3-6}--\eqref{convergence-bound-3-6-2}, \eqref{convergence-bound-26-1}--\eqref{convergence-bound-26-2}, and \eqref{convergence-bound-12-1}--\eqref{convergence-bound-12-2} leads to
    	\begin{eqnarray}
    		s(C_{29}D_1^{m+1} + C_{30}D_2^{m+1}) \le \frac14 + s (C_{34}+C_{35} +C_{39}) ,
    		\label{convergence-bound-13}
    	\end{eqnarray}
    	and under the additional constraint for the time step
    	\begin{eqnarray}
    		s \le  \frac{1}{\ 4 (C_{34}+C_{35}+C_{39}) \ } ,
    		\label{constraint-s-2}
    	\end{eqnarray}
    	we get the desired result, estimate \eqref{convergence-bound-14}.
    \end{proof}
    \subsection{The main result: an error estimate}
    \label{subsec:error-estimate}
    
    The following theorem is the main theoretical result of this article.  The basic idea is to extend the \emph{a-priori} error estimate \eqref{a priori-1} by an induction argument.
    
    \begin{thm}
    	\label{thm:convergence}
    	Given initial data $\phi^{0},\phi^1 \in C^6 (\overline{\Omega})$, with homogeneous 
    	Neumann boundary conditions, suppose the unique solution for the 
    	CHHS equation (\ref{equation-CHHS-1}) -- (\ref{equation-CHHS-3}) is of regularity class 
    	$\mathcal{R}$. Then, provided $s$ and $h$ are sufficiently small, 
    	for all positive integers $\ell$, such that $s\cdot \ell \le T$, we have
    	\begin{equation}
    		\nrm{ \nabla_h \tilde{\phi}^\ell}_2^2 +  \varepsilon^2 s   \sum_{m=1}^{\ell} \nrm{ \nabla_h \Delta_h \tilde{\phi}^m}_2^2
    		\le C \left( s^4 + h^4 \right),   \label{convergence-3}
    	\end{equation}
    	where $C>0$ is independent of $s$ and $h$.
    \end{thm}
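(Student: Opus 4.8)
The statement is nothing but the \emph{a priori} estimate \eqref{a priori-1} pushed all the way to $\ell=M$, so I would prove \eqref{a priori-1} for every $\ell$ by induction on the time index, leaving the constants $C_{32},C_{33}$ unspecified until the end and then choosing them so that the bound reproduces itself. The base cases cost little: because $\phi^0=\Phi^0$ and $\phi^1=\Phi^1$ we have $\tilde\phi^0=\tilde\phi^1\equiv 0$, so \eqref{a priori-1} holds with both sides zero at $\ell=1$; for $\ell=2$ one applies the stability inequality \eqref{eq:stable result} at $m=1$, where the vanishing of $\tilde\phi^0,\tilde\phi^1$ removes all terms except $2s\|\tau^{3/2}\|_2^2$, $sC_{28}D_3^2h^4$ and $s(C_{29}D_1^2+C_{30}D_2^2)\|\nabla_h\tilde\phi^2\|_2^2$; since $M_0^2=\|\phi^2\|_\infty$ is bounded by $Cs^{-1/8}$ through \eqref{maximum bound} and the $\ell^2(0,T;H_h^3)$ estimate \eqref{stab-L2-H3-0}, that last coefficient is $o(1)$ as $s\to 0$, and absorbing it gives $\|\nabla_h\tilde\phi^2\|_2^2+\tfrac{11}{64}\varepsilon^2 s\|\nabla_h\Delta_h\tilde\phi^2\|_2^2=O(s^4+h^4)$.

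For the inductive step, assume \eqref{a priori-1} holds at all steps up to some $m$ with $2\le m\le M-1$. The previous theorem then provides the pointwise smallness $s(C_{29}D_1^{m+1}+C_{30}D_2^{m+1})\le\tfrac12$, obtained by feeding the induction hypothesis into the $\|\cdot\|_\infty$ bounds \eqref{convergence-bound-6}--\eqref{convergence-bound-7} under the step-size constraints on $s,h$. Using this in \eqref{eq:stable result}, the term $s(C_{29}D_1^{m+1}+C_{30}D_2^{m+1})\|\nabla_h\tilde\phi^{m+1}\|_2^2$ can be absorbed into the $\|\nabla_h\tilde\phi^{m+1}\|_2^2$ on the left, leaving there a coefficient $\ge\tfrac12$; this is exactly the manoeuvre a direct discrete Gronwall argument cannot carry out, because $D_1^{m+1},D_2^{m+1}$ contain factors of $\|\phi^{m+1}\|_\infty$ that are not uniformly bounded.

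Next I would sum the absorbed inequality over the step index $j=1,\dots,\ell$ (aiming at $t_{\ell+1}$). Using $\tilde\phi^0=\tilde\phi^1=0$, the first two groups telescope to $\|\nabla_h\tilde\phi^{\ell+1}\|_2^2+\tfrac14\|\nabla_h(\tilde\phi^{\ell+1}-\tilde\phi^\ell)\|_2^2\ge\|\nabla_h\tilde\phi^{\ell+1}\|_2^2$, and the three $H_h^3$-type terms (coefficient $\tfrac{11}{64}$ at index $j+1$ against $\tfrac{4}{64}$ at $j$ and $\tfrac{5}{64}$ at $j-1$) recombine, after the index shift and dropping the vanishing $j=0,1$ contributions, into a sum bounded below by $\tfrac1{32}\varepsilon^2 s\sum_{j=1}^{\ell+1}\|\nabla_h\Delta_h\tilde\phi^j\|_2^2$. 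On the right, $2s\sum_j\|\tau^{j+1/2}\|_2^2\le 2TC_{13}^2(s^2+h^2)^2$ and $s\sum_jC_{28}D_3^{j+1}h^4\le C(T+1)h^4$ by the $\ell^1_s$ consequence of Lemma~\ref{lem:Linfty est}, both $O(s^4+h^4)$; and the residual nonlinear sum $\sum_j s(C_{29}D_1^{j+1}+C_{30}D_2^{j+1})(\|\nabla_h\tilde\phi^j\|_2^2+\|\nabla_h\tilde\phi^{j-1}\|_2^2)$ can be rewritten as $\sum_k\beta_k\|\nabla_h\tilde\phi^k\|_2^2$ with $\sum_k\beta_k\le C(T)$, since every product term such as $(M_0^j)^{16/3}(M_0^{j+1})^{8/3}$ or $(M_0^j)^4(M_0^{j+1})^4$ is $\ell^1_s$-summable by a discrete H\"older inequality applied to the $L^8_s(0,T)$ bound \eqref{L8 Linf stab-1}. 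The discrete Gronwall inequality then gives $\|\nabla_h\tilde\phi^{\ell+1}\|_2^2+\tfrac1{32}\varepsilon^2 s\sum_{j=1}^{\ell+1}\|\nabla_h\Delta_h\tilde\phi^j\|_2^2\le C(s^4+h^4)\,e^{C(T)}$, which is \eqref{a priori-1} at step $\ell+1$ for a suitable choice of $C_{32},C_{33}$; this closes the induction, and letting $\ell$ reach $M$ yields \eqref{convergence-3}.

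The main obstacle is the absorption step in the second paragraph: it requires the \emph{pointwise} bound $s(C_{29}D_1^{m+1}+C_{30}D_2^{m+1})<1$, not merely $\ell^1_s$-summability of these coefficients, and this pointwise bound is itself a consequence of the induction hypothesis, so the whole argument must be threaded carefully — the $\|\cdot\|_\infty$ control of $\phi^m$, hence of $D_1^{m+1},D_2^{m+1}$, has to be re-established inside the induction under explicit smallness requirements on $s$ and $h$. A secondary, purely technical, burden is the accounting of the constants $C_{14},\dots,C_{39}$ so that the final $C$ depends only on $T$, $\Omega$, $\varepsilon$, $\gamma$ and the regularity norms of $\phi_e$, and not on $s$ or $h$.
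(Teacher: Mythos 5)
Your proposal is correct and follows essentially the same route as the paper: sum the stability inequality \eqref{eq:stable result}, invoke the \emph{a priori} hypothesis \eqref{a priori-1} together with the pointwise smallness \eqref{convergence-bound-14} to absorb the $\|\nabla_h\tilde{\phi}^{m+1}\|_2^2$ term, exploit $\tilde{\phi}^0=\tilde{\phi}^1\equiv 0$ in the telescoping, and close the induction with the discrete Gronwall inequality, whose exponent is controlled by the $\ell^1_s$ summability of $D_1^{j+1},D_2^{j+1}$ coming from the $L^8_s$ bound of Lemma~\ref{lem:Linfty est}. Your explicit treatment of the base cases and of the recombination $\tfrac{11}{64}-\tfrac{4}{64}-\tfrac{5}{64}=\tfrac{1}{32}$ for the $H_h^3$ terms is a welcome elaboration of details the paper leaves implicit, but it does not change the argument.
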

    
    \begin{proof}
    	Suppose that  $m+1\le M$. By summing \eqref{eq:stable result} we obtain
    	\begin{eqnarray}
    		&& \| \nabla_h\tilde{\phi}^{m+1} \|_2^2 + \frac14 \| \nabla_h ( \tilde{\phi}^{m+1} - \tilde{\phi}^m )  \|_2^2 +\frac{1}{32}\varepsilon^2 s\sum_{j=1}^{m+1} \| \nabla_h \Delta_h \tilde{\phi}^j \|_2^2
    		\nonumber
    		\\ 
    		&\le& \| \nabla_h\tilde{\phi}^0 \|_2^2 + \frac14 \| \nabla_h ( \tilde{\phi}^0 - \tilde{\phi}^{-1} )  \|_2^2 + s\sum_{j=1}^{m+1} (C_{29}D_1^j + C_{30}D_2^j) \| \nabla_h\tilde{\phi}^j \|_2^2
    		\nonumber
    		\\
    		&&  + s\sum_{j=0}^m (C_{29}D_1^{j+1} + C_{30}D_2^{j+1}) ( \| \nabla_h\tilde{\phi}^j \|_2^2 + \| \nabla_h\tilde{\phi}^{j-1} \|_2^2 )
    		\nonumber
    		\\
    		&&
            +  s \sum_{j=1}^{m+1} \| \tau^{j+1/2} \|_2^2 +C_{28} s\sum_{j=1}^{m+1}D^j_3h^4  . 
    	\end{eqnarray}
    	We proceed by induction.  Namely, suppose that \eqref{a priori-1} holds.  Then, if $h$ and $s$ are sufficiently small -- as required in the proof of the last theorem -- considering (\ref{convergence-bound-14}) and using $\tilde{\phi}^{-1}\equiv\tilde{\phi}^0\equiv 0$, we have
    	\begin{eqnarray}
    	&&\frac12 \| \nabla_h\tilde{\phi}^{m+1} \|_2^2 + \frac{1}{32}\varepsilon^2 s\sum_{j=1}^{m+1} \|\nabla_h \Delta_h \tilde{\phi}^j \|_2^2
    	\nonumber
    	\\ 
    	&\le& s\sum_{j=0}^m (C_{29}D_1^{j+1} + C_{30}D_2^{j+1}) (2 \| \nabla_h\tilde{\phi}^j \|_2^2 + \| \nabla_h\tilde{\phi}^{j-1} \|^2 )
    	\nonumber
    	\\
    	&& 
        +  s \sum_{j=1}^{m+1} \| \tau^{j+1/2} \|_2^2 +C_{28} s\sum_{j=1}^{m+1}D^j_3h^4  . 
    	\end{eqnarray}
    	Hence
    	\begin{eqnarray}
    		\| \nabla_h\tilde{\phi}^{m+1} \|_2^2 + \varepsilon^2 s\sum_{j=1}^{m+1} \| \nabla_h \Delta_h \tilde{\phi}^j \|_2^2 &\le& s\sum_{j=0}^m (32C_{29}D_1^{j+1} + 32C_{30}D_2^{j+1}) (2 \| \nabla_h\tilde{\phi}^j \|_2^2 + \| \nabla_h\tilde{\phi}^{j-1} \|_2^2 )
    		\nonumber
    		\\
    		&&+ C_{40} (s^4+h^4 ),
    	\end{eqnarray}
    	where $C_{40}>0$ is a constant that is independent of $s$ and $h$. Using the discrete Gronwall inequality gives
    	\begin{eqnarray}
    		\| \nabla_h\tilde{\phi}^{m+1} \|_2^2 +  \varepsilon^2 s\sum_{j=1}^{m+1} \| \nabla_h \Delta_h \tilde{\phi}^j \|_2^2 &\le& C_{40} (s^4+h^4 )\exp\left( s\sum_{j=1}^{m}(96C_{29}D_1^{j+1} + 96C_{30}D_2^{j+1}) \right)
    		\nonumber
    		\\
    		&\le& C_{40} (s^4+h^4 )\exp\left(C_{41}(t_{m+1}+1) \right)  ,
    		\label{convergence-2}
    	\end{eqnarray}
    	where $C_{41}>0$ is a constant that is independent of $s$ and $h$.
    	Consequently, the \emph{a priori} assumption (\ref{a priori-1}) can be
    	justified at time step $t_{m+1}$ by taking $C_{32} =  C_{40}$,  $C_{33} = C_{41}$.  This completes the induction argument, and the proof of Theorem~\ref{thm:convergence} is finished.
    \end{proof}    
\section{Numerical Experiments}\label{sec:numerical}
In this section, we perform some numerical tests in two-dimensional space to verify the accuracy and efficiency of the proposed numerical 
scheme (\ref{2nd scheme-CHHS-s1})-(\ref{2nd scheme-CHHS-s3}).  The coupled systems are solved by the Full Approximation Scheme (FAS) under the
nonlinear multigrid framework in \cite{feng2016bsam,wise2010unconditionally}. Here we omit the details for brevity; more details in \cite{collins2013efficient,wise2010unconditionally} are referred to the readers. 
In the following tests, all the numerical experiments were performed with Fortran90 
on Thinkpad W541 running with Intel Core i7-4800MQ at 2.80Ghz with  7.4GB memory under the Ubuntu 14.04. The general parameters of FAS are finest grid $2\times 2$, pre- and post-smooth steps $\nu_1=\nu_2=2$ and stopping tolerance $tol=10^{-10}$.

\subsection{Convergence rate, energy dissipation and mass conservation test}
To estimate the convergence rate, we perform the Cauchy-type convergence as in \cite{baskaran2016energy,collins2013efficient,feng2016preconditioned,hu2009stable, shen2012second, wang2010unconditionally} on a square $\Omega=[0,L_x]\times[0,L_y]$ with initial condition
\begin{eqnarray}\label{eqn:init1}
\phi(x,y,0)&=&\frac{\big[1-\cos\big(\frac{4\pi x}{L_x}\big)\big]\cdot\big[1-\cos\big(\frac{2\pi y}{L_y}\big)\big]}{2}-1.
\end{eqnarray}

The homogeneous Neumann boundary conditions are imposed for $\phi$, $\mu$ and $p$. In this test, the Cauchy difference is defined as $\delta_\phi = \phi_{h_f} - \mathcal{I}_c^f \phi_{h_c}$,  where $h_c = 2h_f$ and $\mathcal{I}_c^f$ is a bilinear interpolation operator that maps the coarse grid  approximation $u _{h_c}$ onto the fine grid (we applied $nearest$ matlab interpolation function). We take a liner refinement path, i.e. $s = Ch$. At the final time $T=0.8$, we expect the global error to be $O (s^2) + O (h^2) = O (h^2)$ under the $\ell^2$ norm, as $h,s \to 0$. The other parameters are given by $L_x = L_y = 3.2$, $s = 0.05h$, $\varepsilon=0.2$ and $\gamma=2$. The norms of Cauchy difference, the convergence rates, the average number of V-cycle and average CPU time for one time step can be found in Table \ref{tab:cov}, which confirms our second order convergence rate expectation and indicates the efficiency of the proposed numerical scheme. The evolutions of discrete energy and mass for the simulation, associated with Table \ref{tab:cov} for the $h = \frac{3.2}{512}$, are presented in Figure \ref{fig:energy-mass}. The energy dissipation property is clearly demonstrated in the evolutions of discrete energy in the figure. And also, the evolution of discrete mass indicates the mass conservative property, with $\int_\Omega \phi (x,y,0) d {\bf x} = -5.12$.

\begin{table}[!htb]
\begin{center}
\caption{Errors, convergence rates, average iteration numbers and average CPU time (in seconds)
for each time step.} \label{tab:cov}
\begin{tabular}{cccccc}
\hline $h_c$&$h_{f}$&$\norm{\delta_\phi}_{2}$ & Rate& \#V's & $T_{cpu}(h_f)$\\
\hline $\frac{3.2}{16}$&$\frac{3.2}{32}$& $7.6501\times 10^{-3}$&- &5 &0.0012
\\$\frac{3.2}{32}$&$\frac{3.2}{64}$& $1.8565\times 10^{-3}$ &2.04 &5 &0.0046
\\ $\frac{3.2}{64}$ &$\frac{3.2}{128}$& $4.6141\times 10^{-4}$&2.01 &4 &0.0160
\\ $\frac{3.2}{128}$ & $\frac{3.2}{256}$ &$1.1520\times 10^{-4}$&2.00 &4 & 0.0744
\\ $\frac{3.2}{256}$ & $\frac{3.2}{512}$&$2.8792\times 10^{-5}$&2.00&5 &0.3818\\
\hline
\end{tabular}
\end{center}
\end{table}

\begin{figure}[!htp]
\centering
\includegraphics[width=0.45\textwidth]{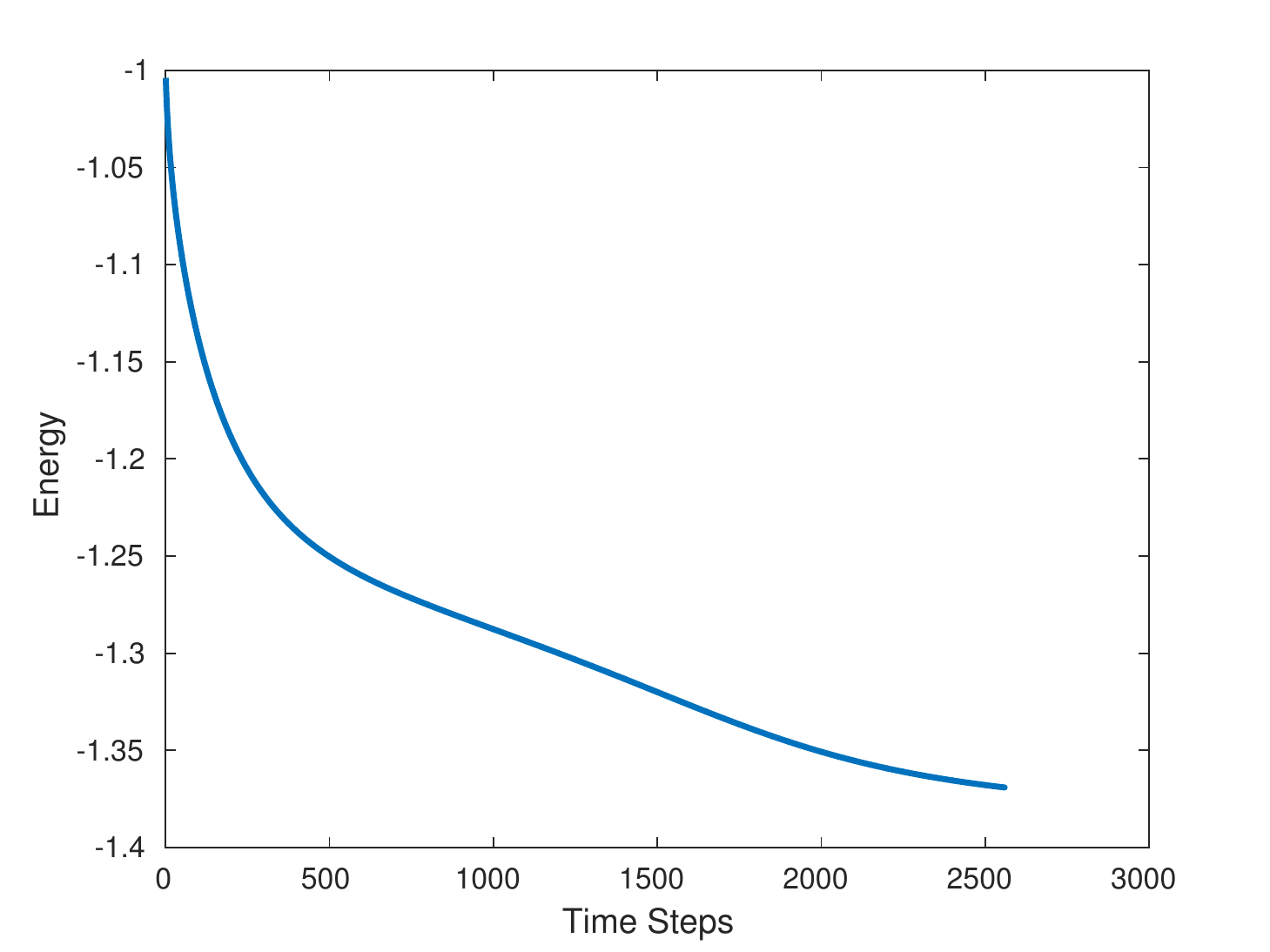}
\includegraphics[width=0.45\textwidth]{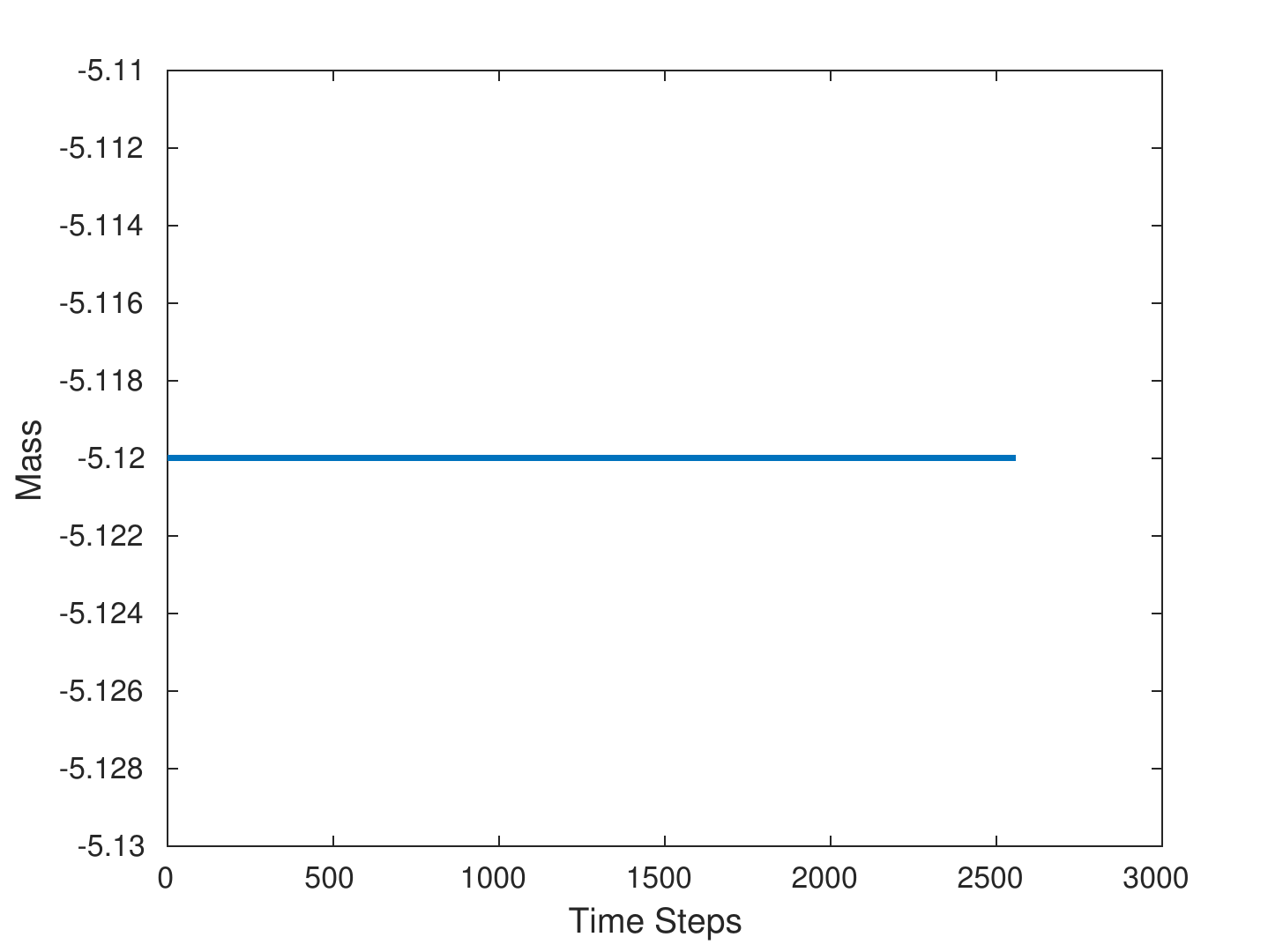}\hfill
\caption{The evolutions of discrete energy and mass for the simulation depicted in Table \ref{tab:cov} for the $h=3.2/512$ case.}
\label{fig:energy-mass}
\end{figure}

\subsection{Spinodal decomposition}
In this test, we simulate the spinodal decomposition of a binary fluid in a Hele-Shaw cell and show the effect of $\gamma$ on the phase decomposition. The simulation parameters are similar to those in \cite{wise2010unconditionally}, with the parameters given by $L_x=L_y=6.4$, $\varepsilon=0.03$, $h=6.4/512$, and $s=0.01$. The initial data for this simulation is taken 
as a random field values $\phi^0_{i,j}=\bar{\phi}+0.05\cdot(2r_{i,j}-1)$ with an average composition $\bar{\phi}=-0.05$ and $r_{i,j}\in[0,1]$. The simulation results are presented in Figures  \ref{fig:spinodal} and \ref{fig:energy-gamma}. From Figure. \ref{fig:spinodal}, we observe that the particles indeed have a smaller shape factor for $\gamma =4$ than for $\gamma=0$ at same time, which coincides with the real physical states. Since larger $\gamma$ would improve the fluid flow and enhance the energy dissipation. The energy evolution plot in Figure \ref{fig:energy-gamma} implies that the energy decay are almost the same in the early stages of decomposition. Meanwhile, it is not precisely clear from the energy inequality that the larger $\gamma$ will result in a larger energy dissipation rate \cite{han2014decoupled,lee2002modeling2,wise2009energy}.

\begin{figure}[!htp]
    \centering
        \begin{subfigure}[b]{0.28\textwidth}
            \includegraphics[width=\textwidth,height=\textwidth]{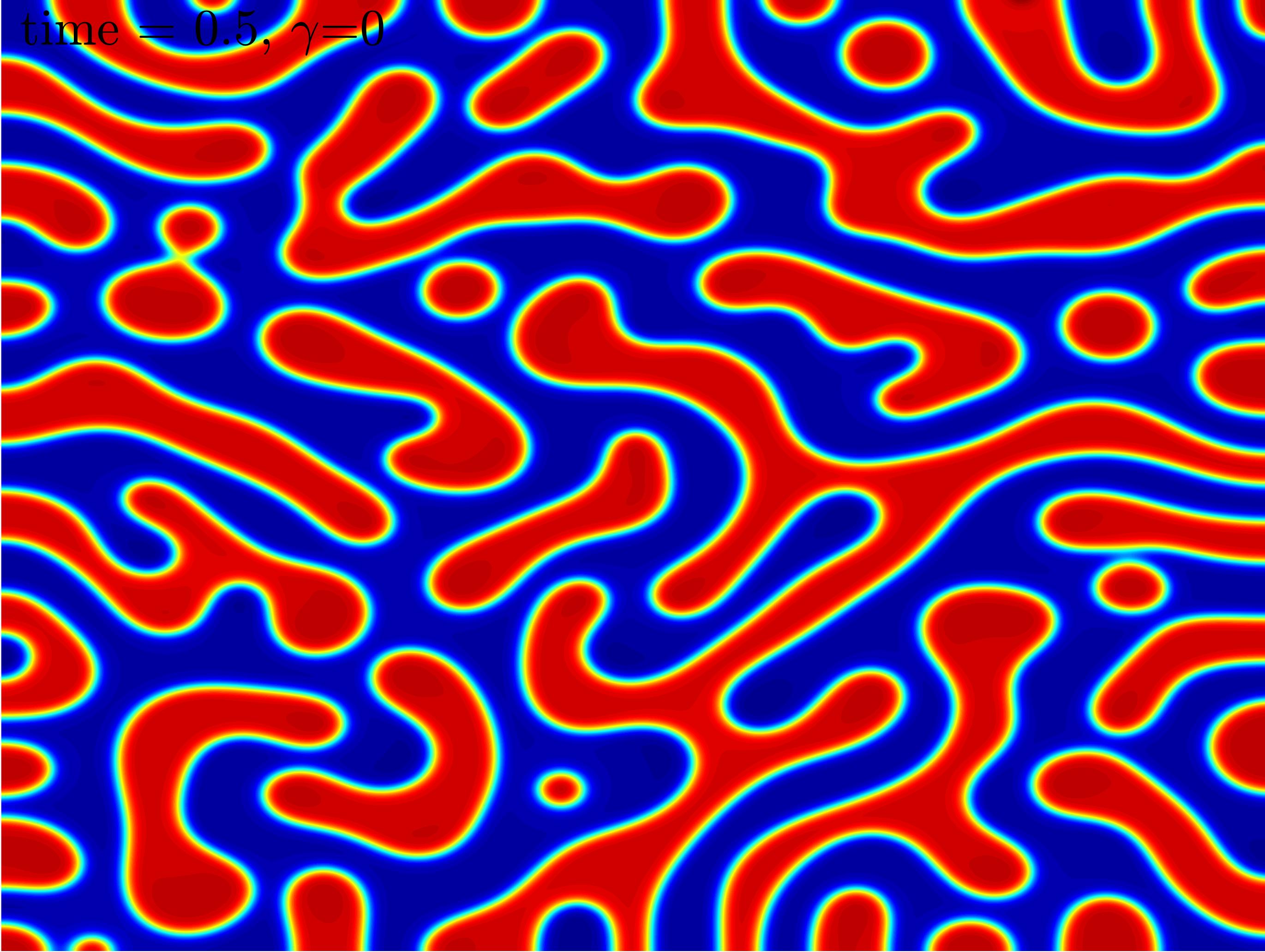}
            \caption*{$t =0.5, \gamma=0$}
        \end{subfigure}
        \begin{subfigure}[b]{0.28\textwidth}
            \includegraphics[width=\textwidth,height=\textwidth]{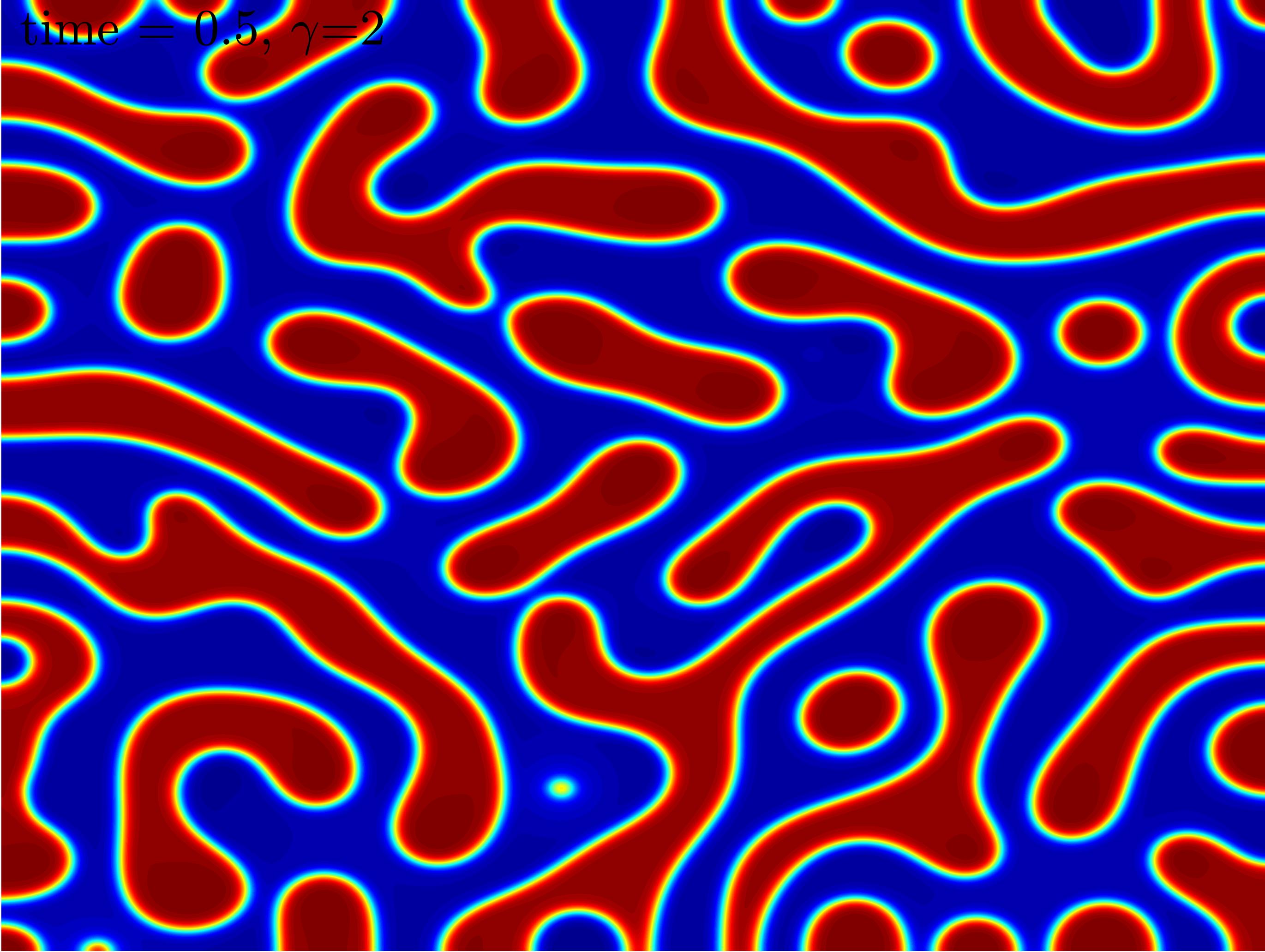}
            \caption*{$t =0.5, \gamma=2$}
        \end{subfigure}
        \begin{subfigure}[b]{0.28\textwidth}
            \includegraphics[width=\textwidth,height=\textwidth]{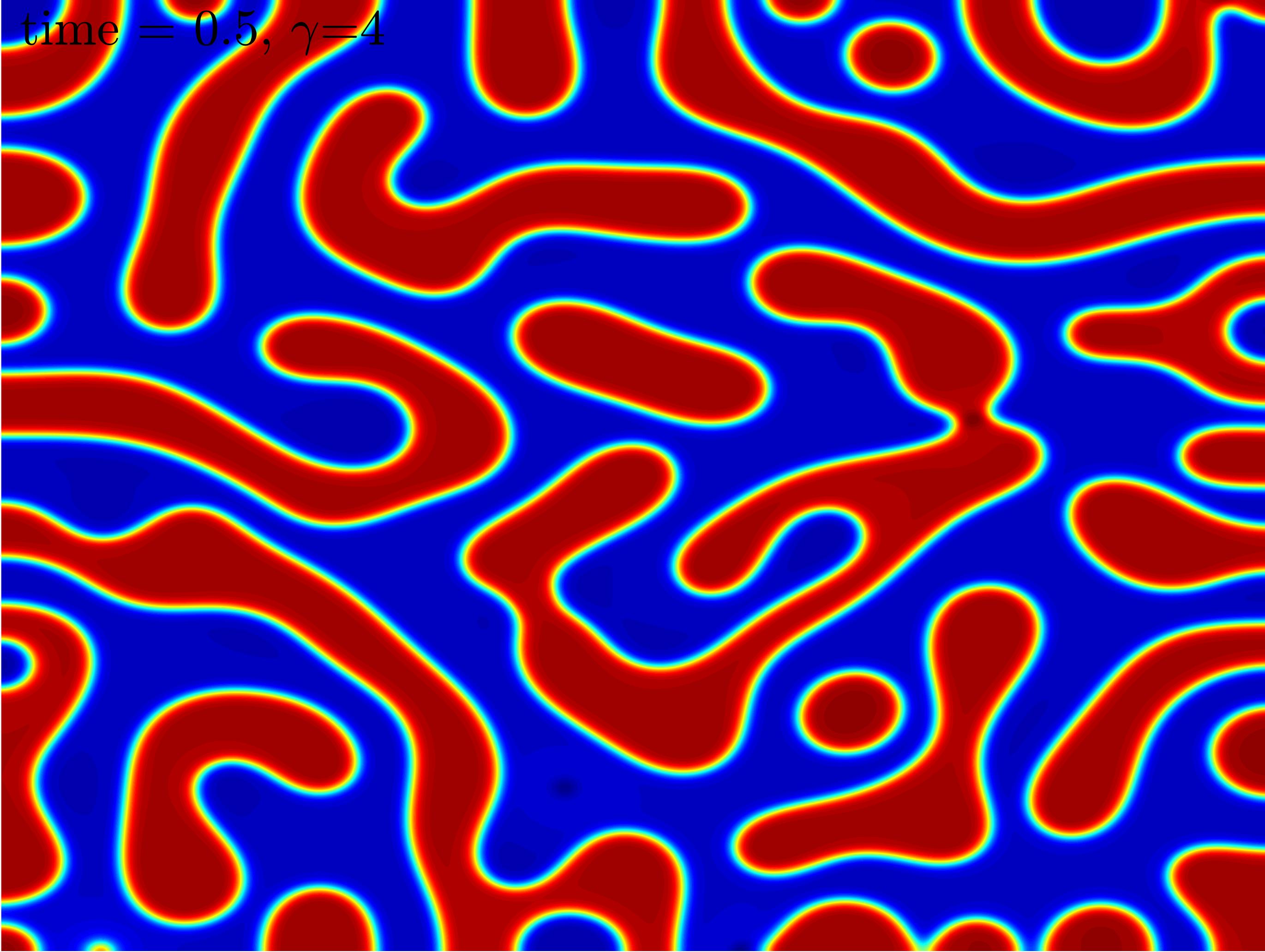}
            \caption*{$t =0.5, \gamma=4$}
        \end{subfigure}\\
    \begin{subfigure}[b]{0.28\textwidth}
        \includegraphics[width=\textwidth,height=\textwidth]{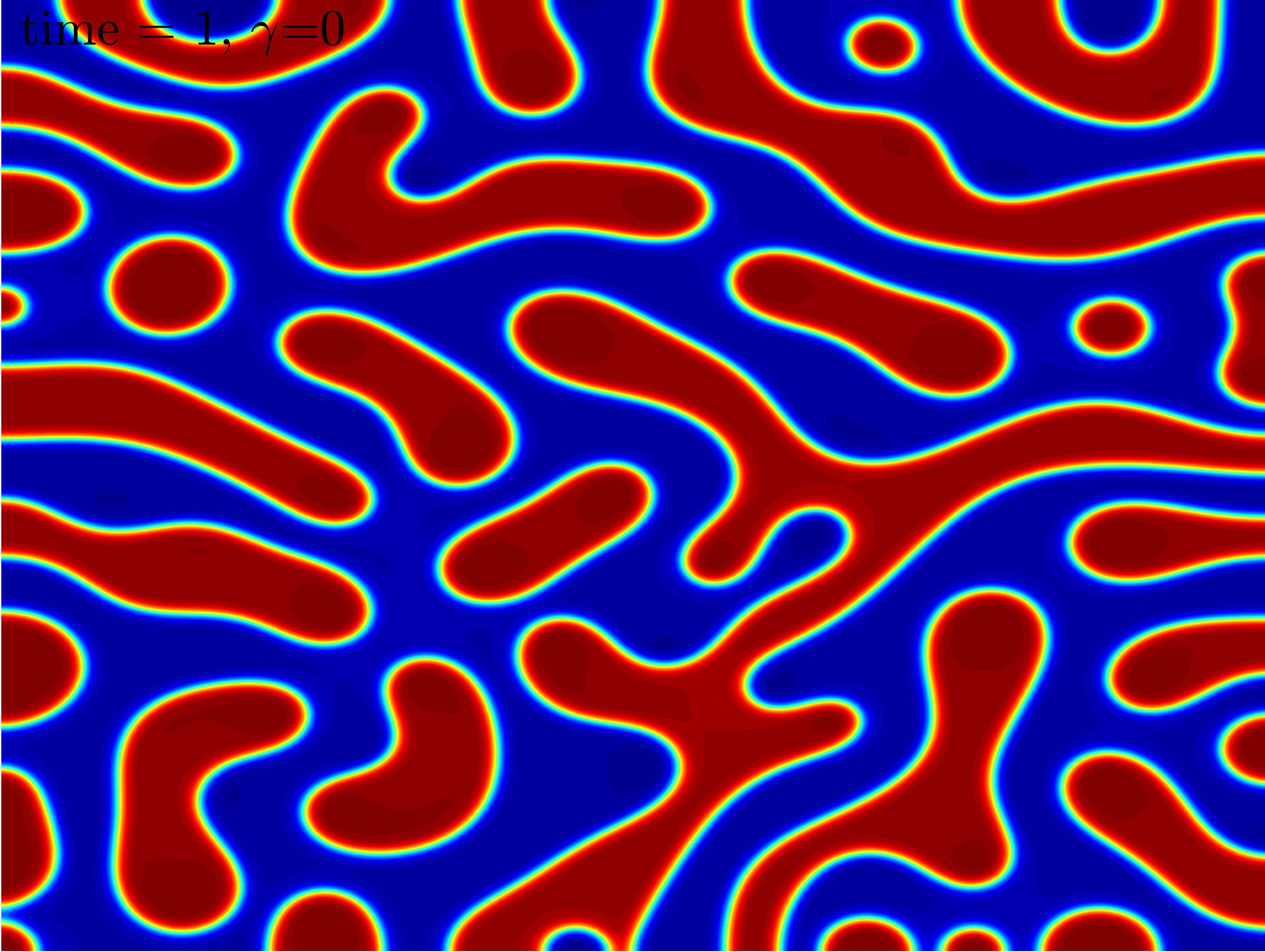}
        \caption*{$t =1, \gamma=0$}
    \end{subfigure}
    \begin{subfigure}[b]{0.28\textwidth}
        \includegraphics[width=\textwidth,height=\textwidth]{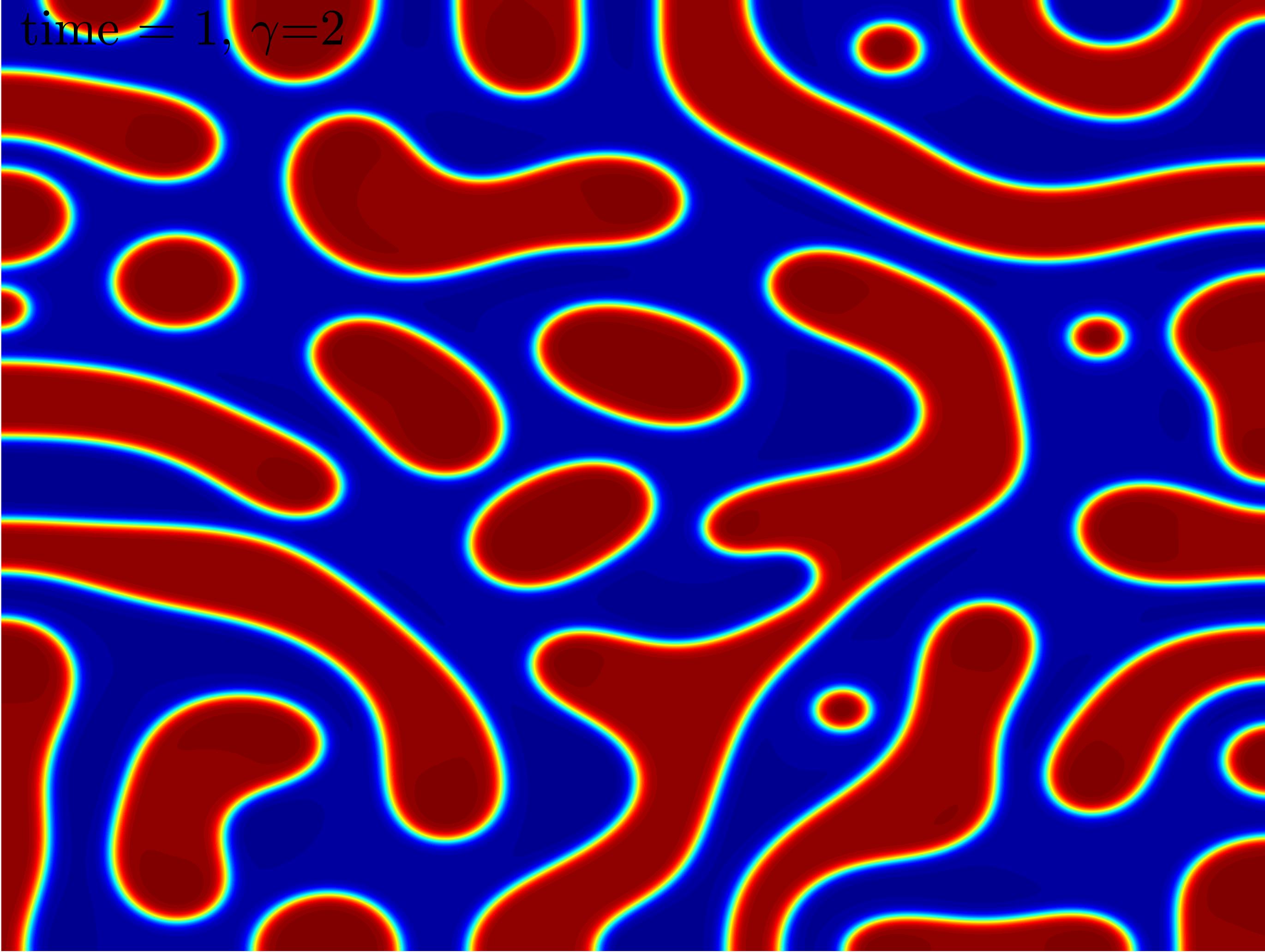}
        \caption*{$t =1, \gamma=2$}
    \end{subfigure}
    \begin{subfigure}[b]{0.28\textwidth}
        \includegraphics[width=\textwidth,height=\textwidth]{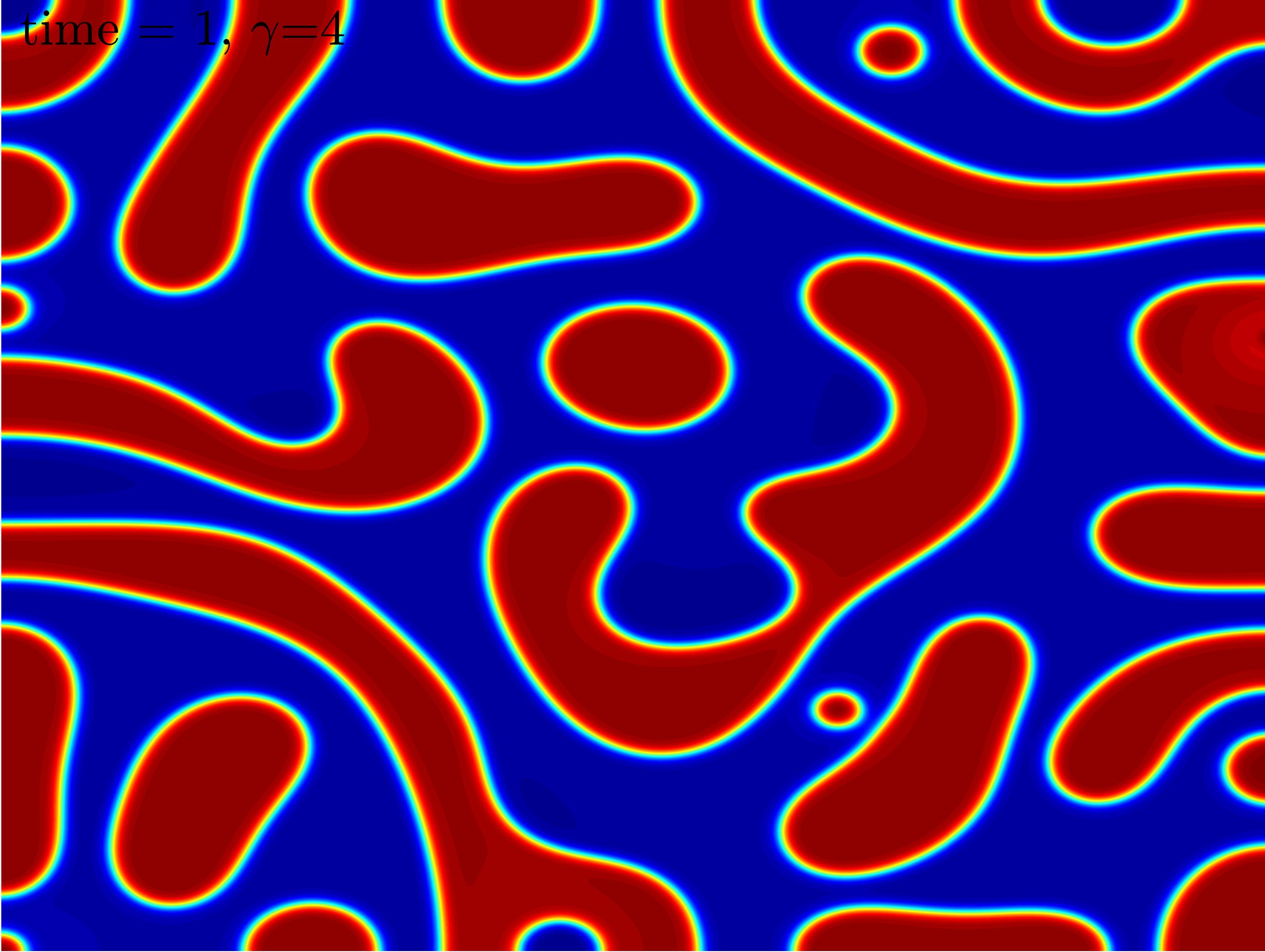}
        \caption*{$t =1, \gamma=4$}
    \end{subfigure}\\
        \begin{subfigure}[b]{0.28\textwidth}
            \includegraphics[width=\textwidth,height=\textwidth]{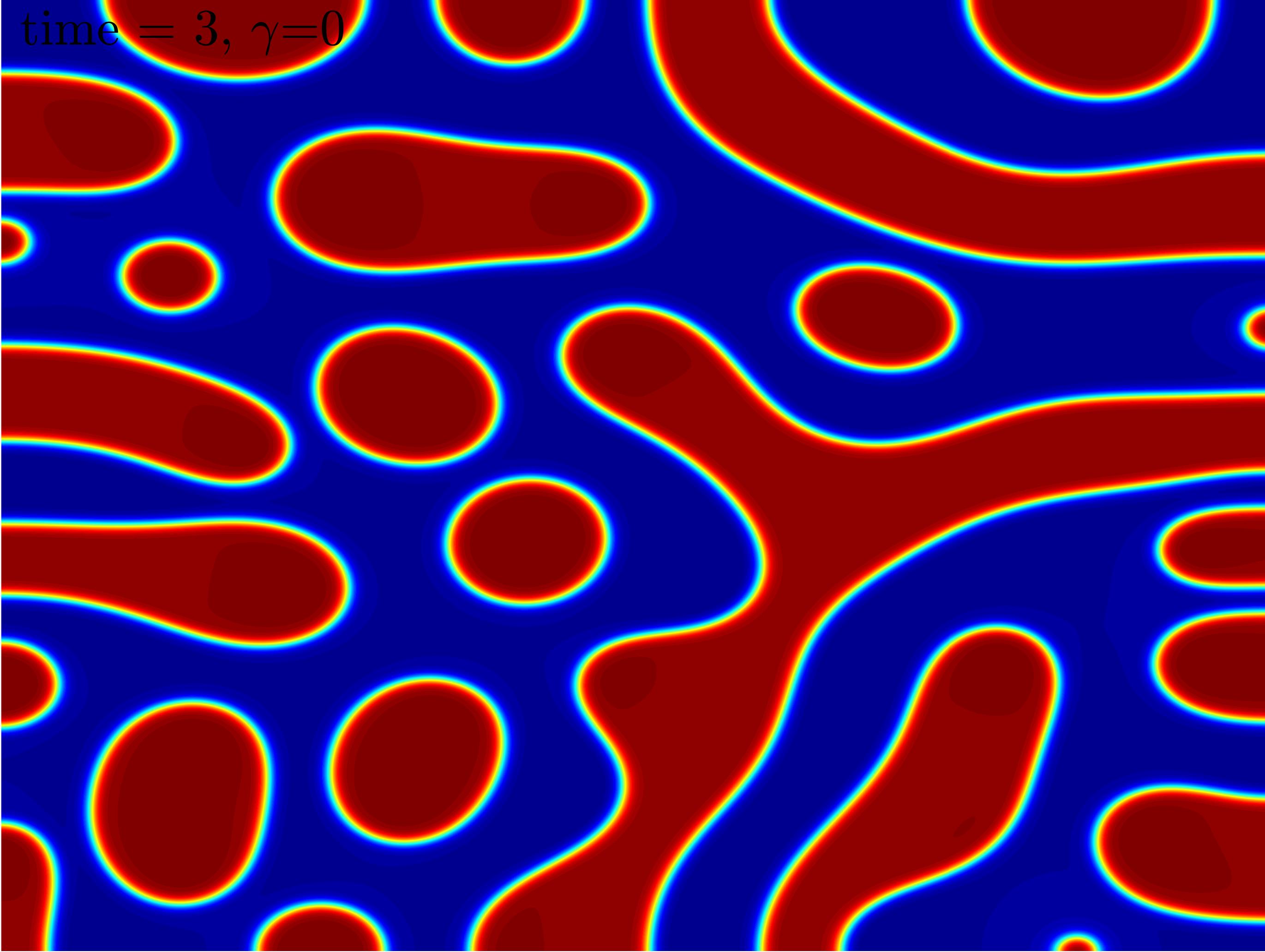}
            \caption*{$t =3, \gamma=0$}
        \end{subfigure}
        \begin{subfigure}[b]{0.28\textwidth}
            \includegraphics[width=\textwidth,height=\textwidth]{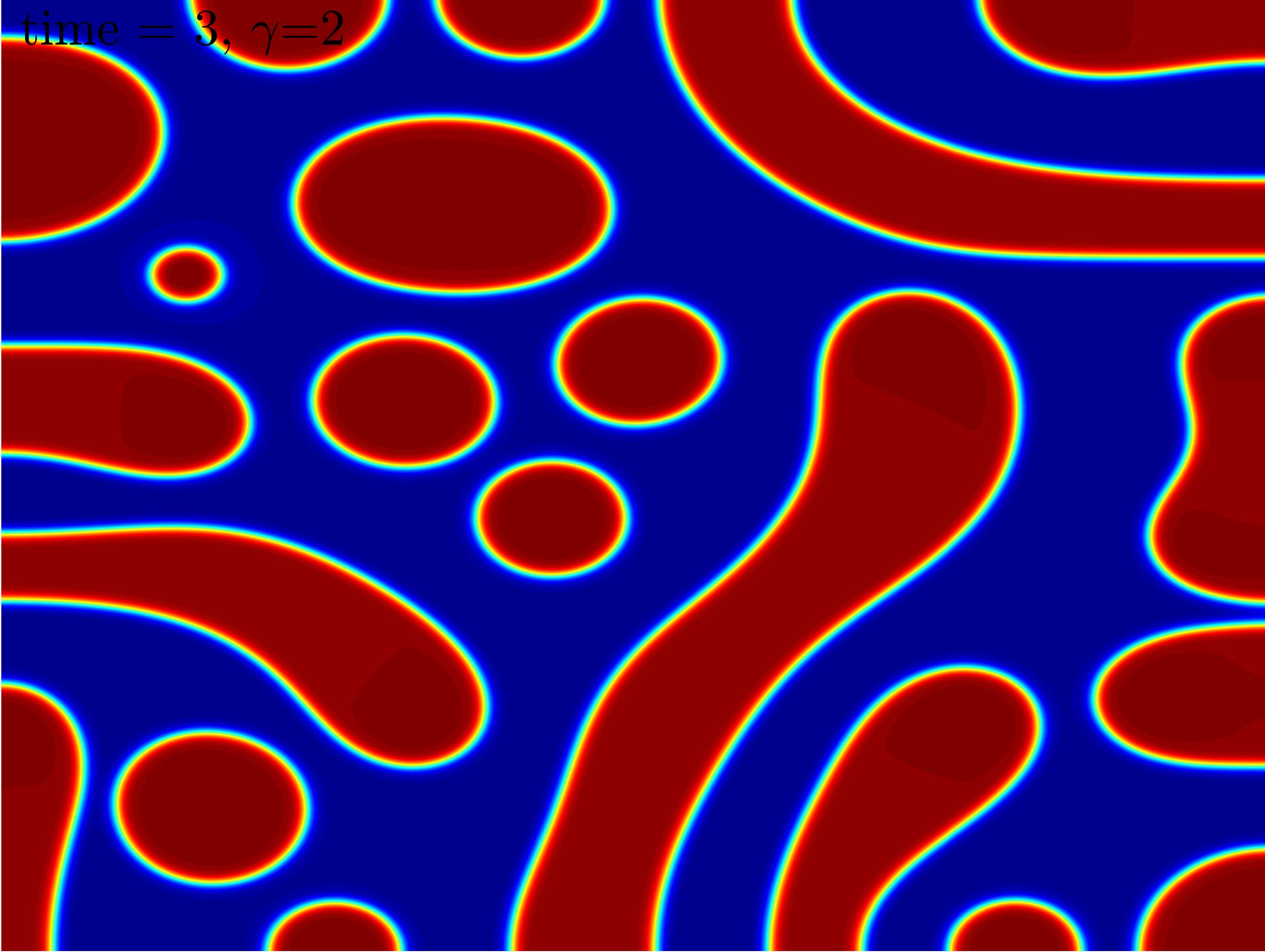}
            \caption*{$t =3, \gamma=2$}
        \end{subfigure}
        \begin{subfigure}[b]{0.28\textwidth}
            \includegraphics[width=\textwidth,height=\textwidth]{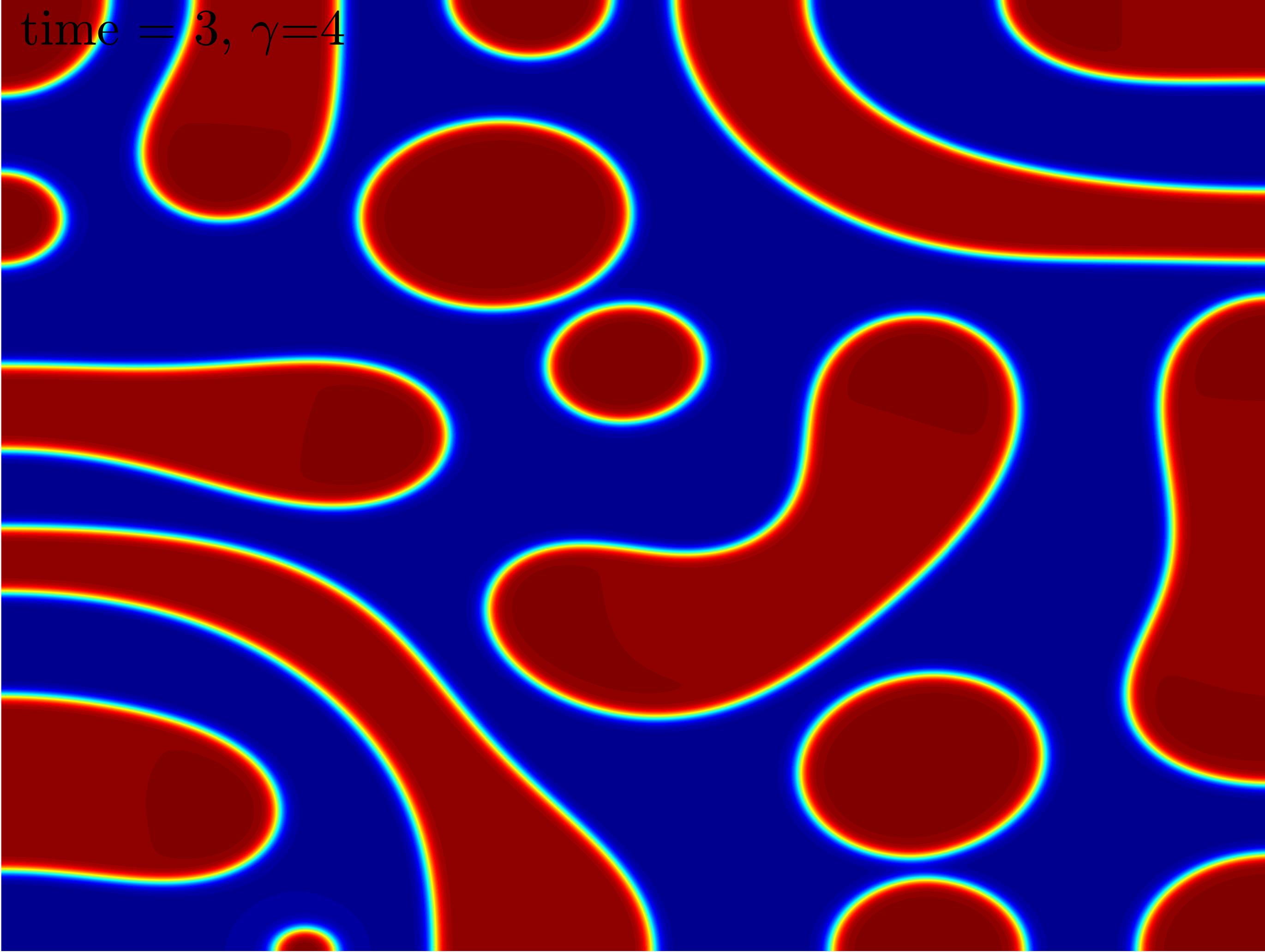}
            \caption*{$t =3, \gamma=4$}
        \end{subfigure}
        \begin{subfigure}[b]{0.28\textwidth}
            \includegraphics[width=\textwidth,height=\textwidth]{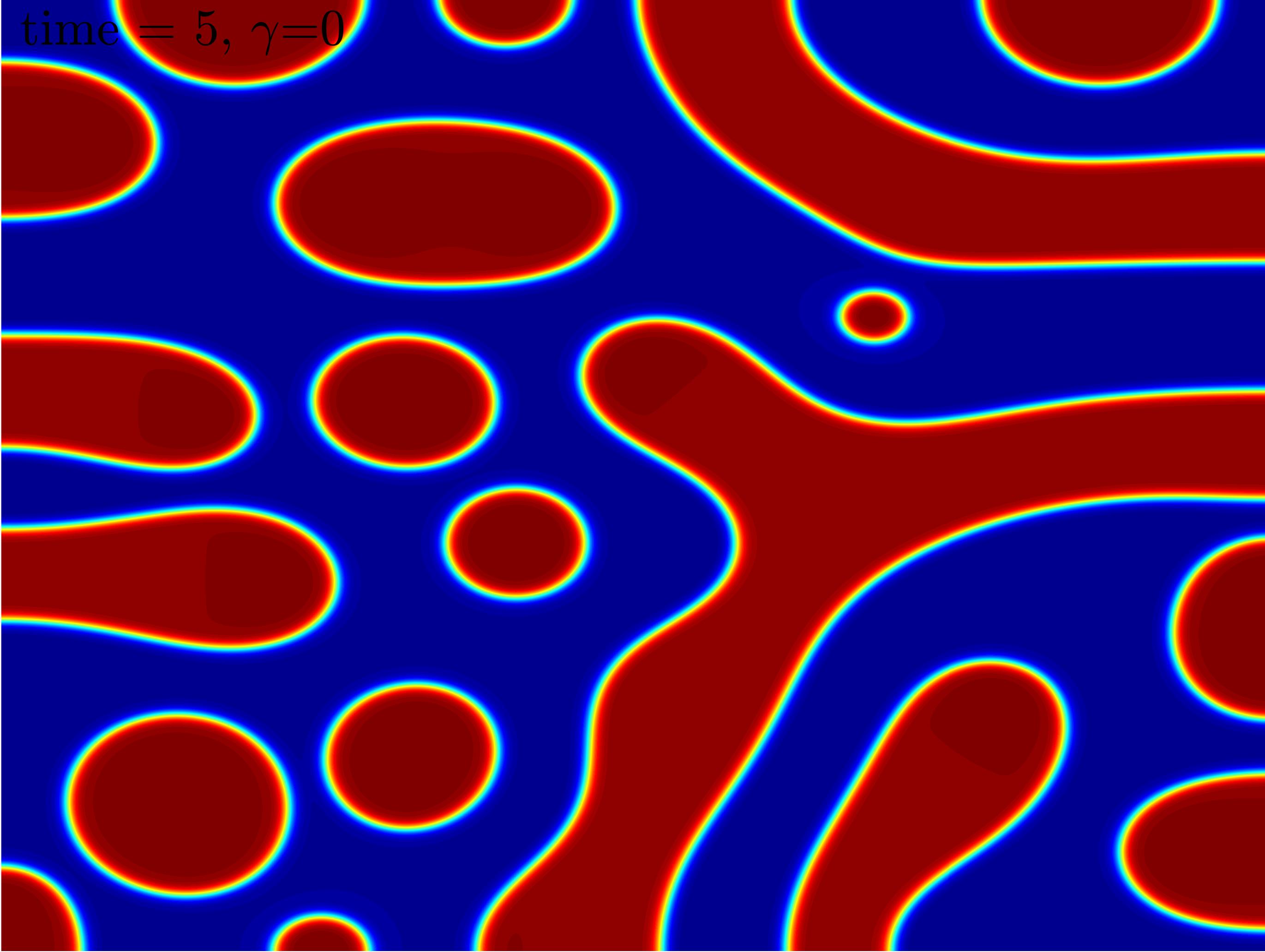}
            \caption*{$t =5, \gamma=0$}
        \end{subfigure}
        \begin{subfigure}[b]{0.28\textwidth}
            \includegraphics[width=\textwidth,height=\textwidth]{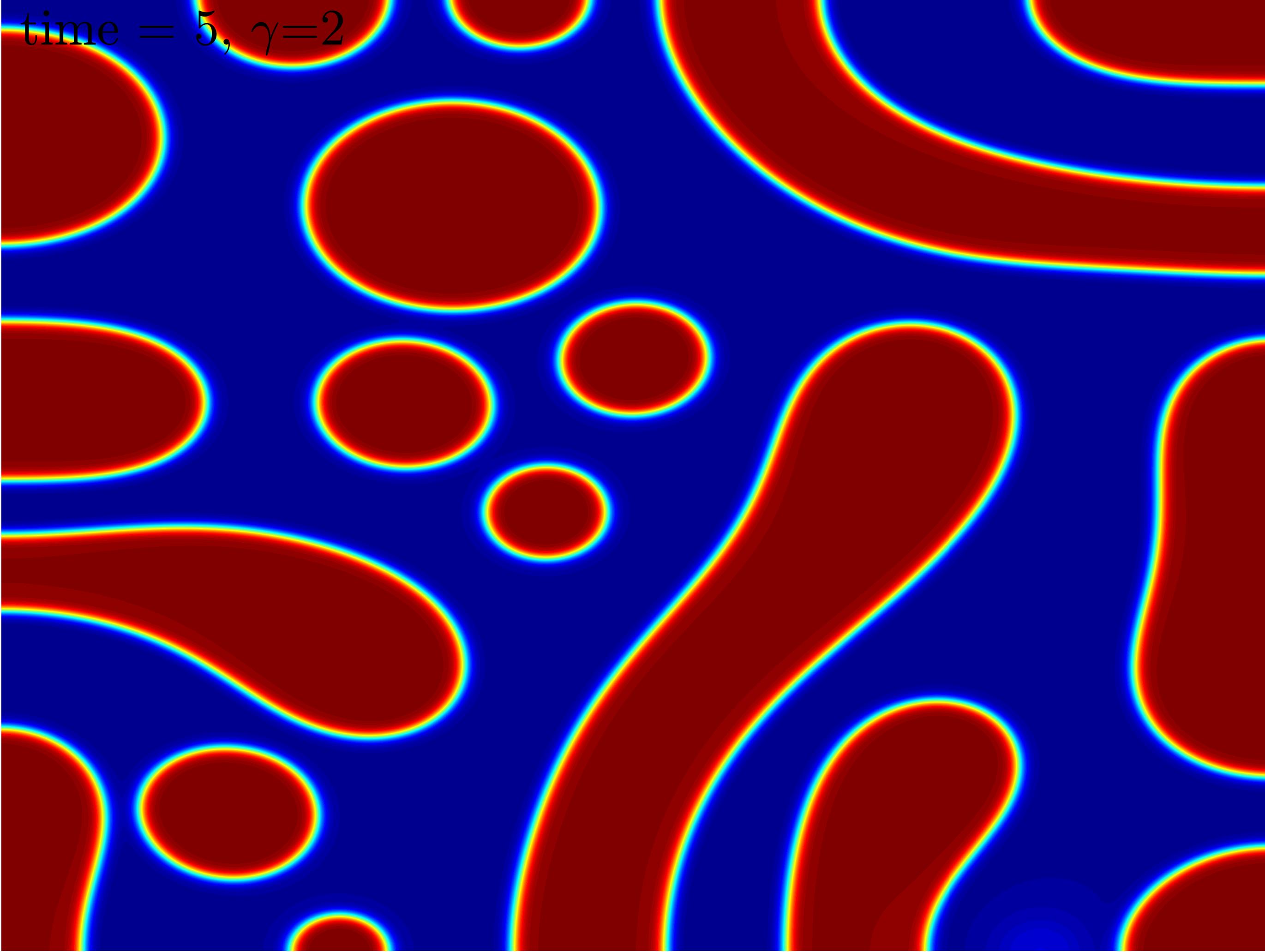}
            \caption*{$t =5, \gamma=2$}
        \end{subfigure}
        \begin{subfigure}[b]{0.28\textwidth}
            \includegraphics[width=\textwidth,height=\textwidth]{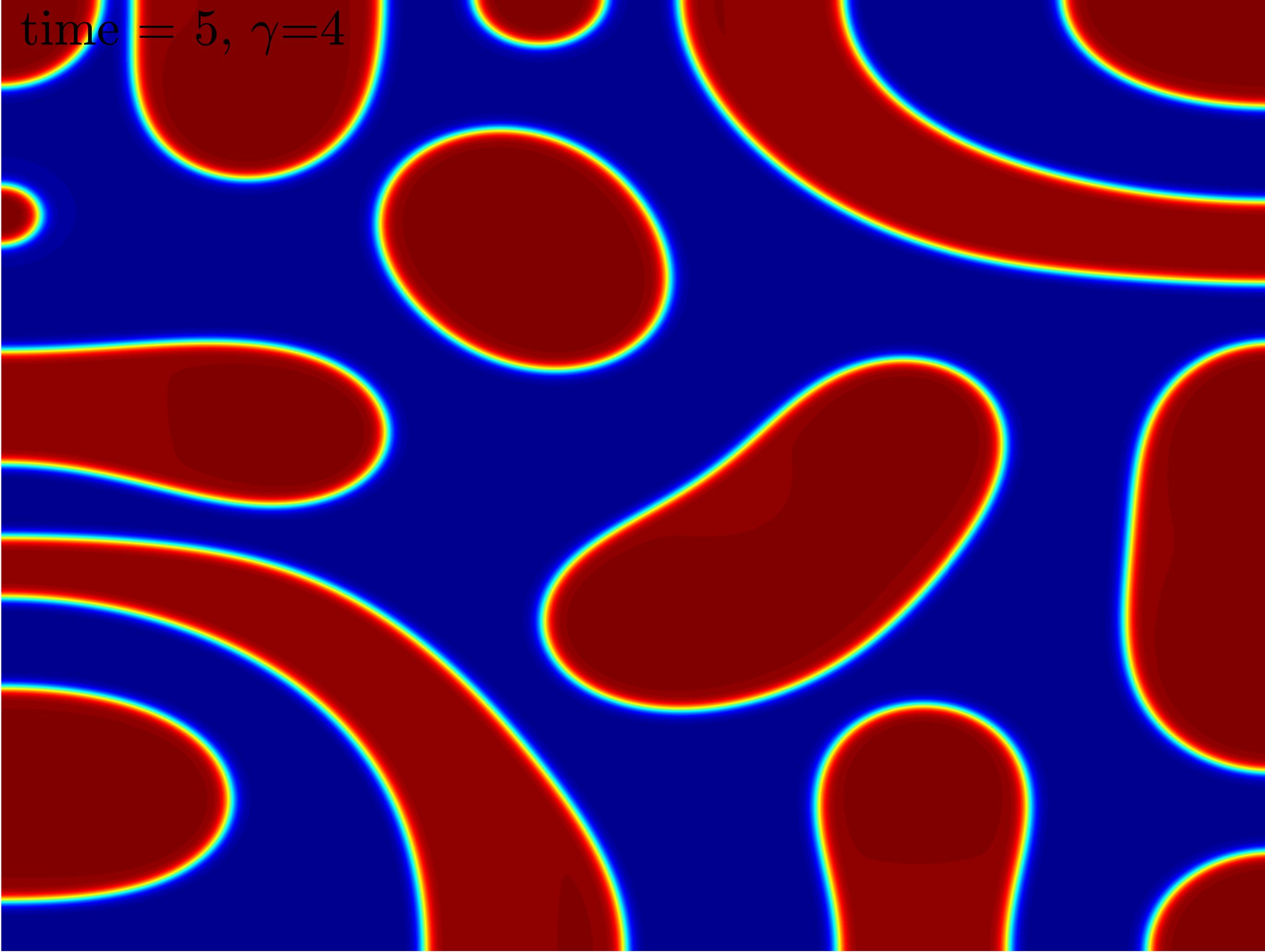}
            \caption*{$t =5, \gamma=4$}
        \end{subfigure}
    \caption{Snapshots of Spinodal decomposition of a binary fluid in a Hele-Shaw cell.}\label{fig:spinodal}
\end{figure}

\begin{figure}[htp]
\centering
\includegraphics[width=0.8\textwidth]{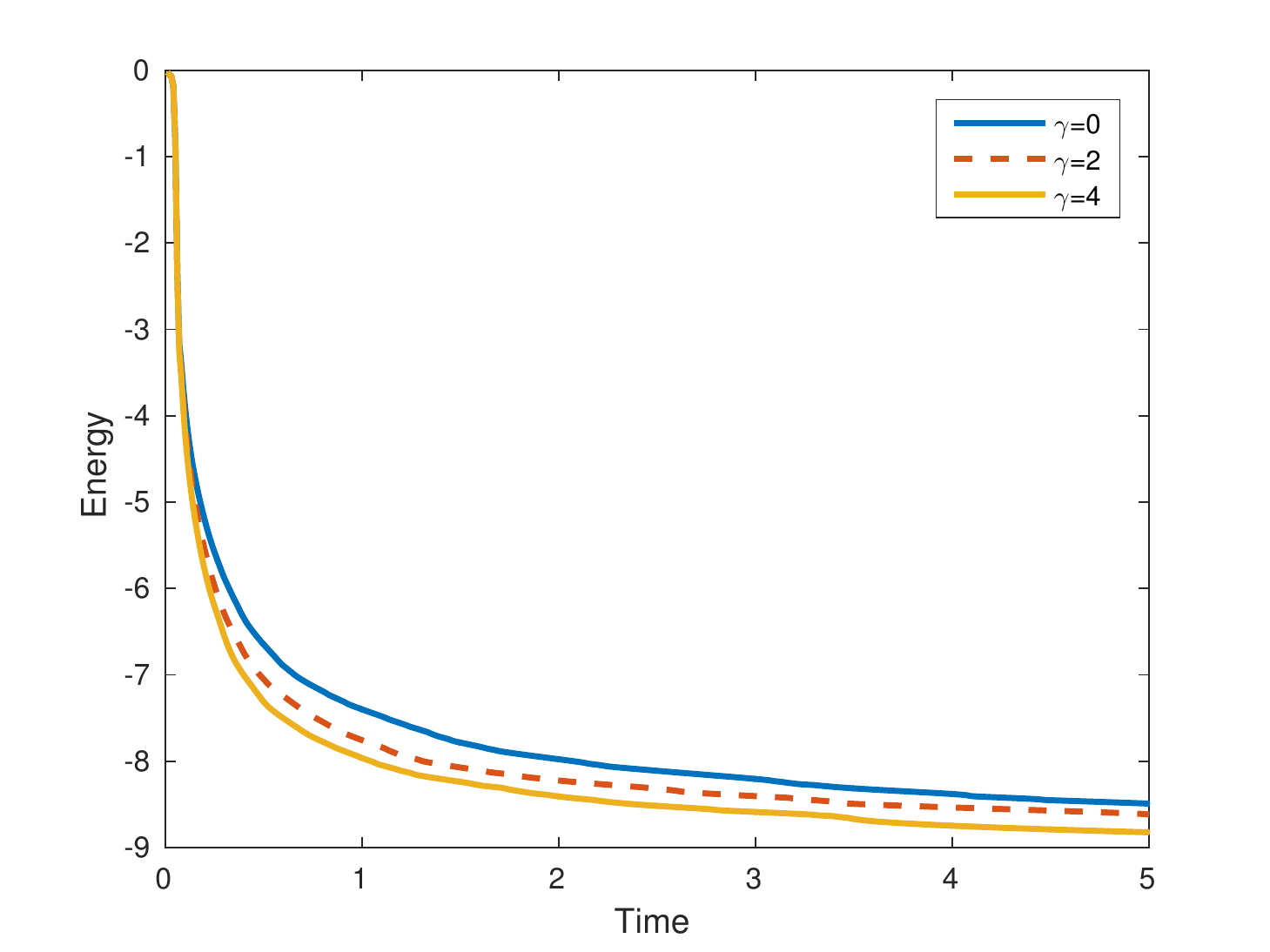}\hfill
\caption{The evolutions of discrete energy with $\gamma=0, 2, 4$.}
\label{fig:energy-gamma}
\end{figure}
\section{Conclusions}\label{sec:conclusion}

A second order accurate energy stable numerical scheme for the Cahn-Hilliard-Hele-Shaw equations is proposed and analyzed in this article. The unique solvability and unconditional energy stability are proved, based on a rewritten form of the scheme, following a convexity analysis. At each time step of this scheme, an efficient nonlinear multigrid solver could be applied to the nonlinear equations associated with the finite difference approximation. At the theoretical side, an $\ell^2 (0,T; H_h^3)$ stability of the numerical scheme is established, in addition to the leading order energy stability. As an outcome of this estimate, we perform an $\ell^\infty (0,T; H_h^1)$ error estimate for the numerical scheme, and an optimal rate convergence analysis is obtained. A few numerical simulation results are presented to demonstrate the accuracy and robustness of the proposed numerical scheme.

	\section*{Acknowledgment}
This work is supported in part by the grants NSF DMS-1418689 (C.~Wang), NSFC 11271281 (C.~Wang), NSF DMS-1418692 (S.~Wise), NSFC 11671098, 91130004, 11331004, a 111 project B08018 (W.~Chen), and the fund by China Scholarship Council 201406100085 (Y.~Liu).  
Y.~Liu thanks University of California-San Diego  
for support during his visit.  C.~Wang also thanks 
Shanghai Key Laboratory for Contemporary Applied Mathematics, 
Fudan University, for support during his visit. 


	\appendix
		
	\section{Discretization of space}
	\label{app:discrete}
	\subsection{Basic definitions}
	Here we use the notation and results for some discrete functions and operators from \cite{wise2010unconditionally}. We begin with definitions of grid functions and difference operators needed for the three-dimensional discretization. We consider the domain $\Omega = (0,L_x)\times(0,L_y)\times(0,L_z)$ and assume that $N_x$, $N_y$ and $N_z$ are positive integers such that $h = L_x/N_x = L_y/N_y = L_z/N_z$, for some $h > 0$, which is called the spatial step size. Consider, for any positive integer $N$, the following sets:
	\begin{eqnarray}
		\mathcal{E}_{N} &\mathop{:=}& \{i\DOT h ~\big|~ i = 0,\ldots,N\},
		\qquad
		\mathcal{C}_{N} \mathop{:=} \{(i-\nicefrac{1}{2})\DOT h ~\big|~ i = 1,\ldots,N)\}, \\
		\mathcal{C}_{\overline{N}} &\mathop{:=}&  \{(i-\nicefrac{1}{2})\DOT h ~\big|~ i = 0,\ldots,N+1)\}.
	\end{eqnarray}
	The two points belonging to $\mathcal{C}_{\overline{N}}\!\setminus\!\mathcal{C}_{N}$ are the so-called \emph{ghost points}. Define the function spaces
	\begin{eqnarray}
		\mathcal{C}_{\Omega}\!\mathop{:=}{}\! \{\phi\!:\!\mathcal{C}_{\overline{N}_x}\!\!\times\!\! \mathcal{C}_{\overline{N}_y}\!\!\times\!\! \mathcal{C}_{\overline{N}_z}\!\rightarrow\!\mathbb{R}\}, \qquad \mathcal{E}^{x}_{\Omega}\! \mathop{:=}{}\! \{\phi\!:\!\mathcal{E}_{N_x}\!\!\times\!\! \mathcal{C}_{N_y}\!\!\times\!\! \mathcal{C}_{N_z}\!\rightarrow\!\mathbb{R}\}, \\
		\mathcal{E}^{y}_{\Omega}\! \mathop{:=}{}\! \{\phi\!:\!\mathcal{C}_{N_x}\!\!\times\!\! \mathcal{E}_{N_y}\!\!\times\!\! \mathcal{C}_{N_z}\!\rightarrow\!\mathbb{R}\},
		\qquad
		\mathcal{E}^{z}_{\Omega}\! \mathop{:=}{}\! \{\phi\!:\!\mathcal{C}_{N_x}\!\!\times\!\! \mathcal{C}_{N_y}\!\!\times\!\! \mathcal{E}_{N_z}\!\rightarrow\!\mathbb{R}\},
		\\
		\vec{\mathcal E}_\Omega := \mathcal{E}^{x}_{\Omega}\times \mathcal{E}^{y}_{\Omega}\times\mathcal{E}^{z}_{\Omega}.
	\end{eqnarray}
	The functions of $\mathcal{C}_{\Omega}$ are called {\emph cell centered functions}. In component form, cell-centered functions are identified via $\phi_{i,j,k} \mathop{:=} \phi(\xi_{i},\xi_{j},\xi_{k})$, where $\xi_{i} := (i-\nicefrac{1}{2})\DOT h$. The functions of $\mathcal{E}^{x}_{\Omega}$, \emph{et cetera}, are called face-centered functions. In component form, face-centered functions are identified via $f_{i+\frac{1}{2},j,k} := f(\xi_{i+\nicefrac{1}{2}},\xi_{j},\xi_{k})$, \emph{etc}.
	
	A discrete function $\phi\in\mathcal{C}_{\Omega}$ is said to satisfy homogeneous Neumann boundary conditions, and we write $\n\cdot \nabla_h\phi  = 0$ iff at the ghost points $\phi$ satisfies
	\begin{alignat}{16}
		&\phi_{0,j,k} &&= \phi_{1,j,k}, \quad &\phi_{N_x,j,k} &&= \phi_{N_x+1,j,k},\label{eqn:nbc1}
		\\
		&\phi_{i,0,k} &&= \phi_{i,1,k}, \quad &\phi_{i,N_y,k} &&= \phi_{i,N_y+1,k},\label{eqn:nbc2}
		\\
		&\phi_{i,j,0} &&= \phi_{i,j,1}, \quad &\phi_{i,j,N_z} &&= \phi_{i,j,N_z+1}.\label{eqn:nbc3}
	\end{alignat}
	A discrete function $\f = (f^x, f^y, f^z)^T\in\vec{\mathcal{E}}_{\Omega}$ is said to satisfy the homogeneous boundary conditions  $\n\cdot \f  = 0$ iff we have
	\begin{alignat}{16}
		&f^x_{\nicefrac{1}{2},j,k} &&= 0, \quad &f^x_{N_x+\nicefrac{1}{2},j,k} &&= 0,
		\\
		&f^y_{i,\nicefrac{1}{2},k} &&= 0, \quad &f^y_{i,N_y+\nicefrac{1}{2},k} &&= 0,
		\\
		&f^z_{i,j,\nicefrac{1}{2}} &&= 0, \quad &f^z_{i,j,N_z+\nicefrac{1}{2}} &&= 0.
	\end{alignat}
	This staggered grid is also known as the marker and cell (MAC) grid and was first proposed in~\cite{harlow1965numerical} to deal with the incompressible Navier-Stokes equations. Also see \cite{samelson2003surface} for related applications to the 3-D primitive equations.

	\subsection{Discrete operators, inner products, and norms}
	\label{sebsec:discrete operate}
	
	We introduce the face-to-center difference operator $d_{x}\!:\!\mathcal{E}^{x}_{\Omega}\rightarrow \mathcal{C}_{\Omega}$, defined component-wise via
	\begin{equation}
		d_{x}f_{i,j,k} := \frac{1}{h}(f_{i+\frac{1}{2},j,k}-f_{i-\frac{1}{2},j,k}),
	\end{equation}
	with $d_{y}\!:\!\mathcal{E}^{y}_{\Omega}\rightarrow \mathcal{C}_{\Omega}$ and $d_{z}\!:\!\mathcal{E}^{z}_{\Omega}\rightarrow \mathcal{C}_{\Omega}$ formulated analogously. Define $\nabla_h\cdot :\vec{\mathcal{E}}_{\Omega}\rightarrow \mathcal{C}_{\Omega}$ via
	\begin{equation}
		\nabla_h\cdot{\f} \mathop{:=}{} d_{x}f^x+d_{y}f^y+d_{z}f^z,
	\end{equation}
	where $\f = (f^x, f^y, f^z)^T$. Define $A_{x}\!:\!\mathcal{C}_{\Omega}\rightarrow \mathcal{E}^{x}_{\Omega}$ component-wise via
	\begin{equation}
		A_{x}\phi_{i+\frac{1}{2},j,k} := \frac{1}{2}(\phi_{i,j,k}+\phi_{i+1,j,k}),
	\end{equation}
	with $A_{y}\!:\!\mathcal{C}_{\Omega}\rightarrow \mathcal{E}^{y}_{\Omega}$ and $A_{z}\!:\!\mathcal{C}_{\Omega}\rightarrow \mathcal{E}^{z}_{\Omega}$ formulated analogously. Define $A_{h}\!:\!\mathcal{C}_{\Omega}
	\rightarrow \vec{\mathcal{E}}_{\Omega}$ via
	\begin{equation}
		A_{h}{\phi} \mathop{:=}{} \left(A_{x}\phi, A_{y}\phi, A_{z}\phi\right)^T.
		\label{face-average}
	\end{equation}
	Define $D_{x}\!:\!\mathcal{C}_{\Omega}\rightarrow \mathcal{E}^{x}_{\Omega}$
	component-wise via
	\begin{equation}
		D_{x}\phi_{i+\frac{1}{2},j,k} := \frac{1}{h}(\phi_{i+1,j,k}-\phi_{i,j,k}).
	\end{equation}
	$D_{y}\!:\!\mathcal{C}_{\Omega}\rightarrow \mathcal{E}^{y}_{\Omega}$ and $D_{z}\!:\!\mathcal{C}_{\Omega}\rightarrow \mathcal{E}^{z}_{\Omega}$ are similarly evaluated. Define $\nabla_{h}\!:\!\mathcal{C}_{\Omega}
	\rightarrow \vec{\mathcal{E}}_{\Omega}$ via
	\begin{equation}
		\nabla_{h}{\phi} \mathop{:=}{} \left(D_{x}\phi, D_{y}\phi, D_{z}\phi\right)^T.
	\end{equation}
	The standard discrete Laplace operator $\Delta_h:\mathcal{C}_\Omega \rightarrow \mathcal{C}_\Omega$ is just
	\begin{equation}
		\Delta_{h}\phi := \nabla_{h}\cdot\nabla_{h}\phi.
	\end{equation}

	We define the following inner-products:
	\begin{alignat}{8}
		&(\phi , \psi) &&\mathop{:=}{}
		h^3 \sum^{L}_{i=1}\sum^{M}_{j=1}\sum^{N}_{m=1}\phi_{i,j,k}\psi_{i,j,k} , && \forall \ \phi, \psi \in \mathcal{C}_{\Omega},\\
		&\left[ f , g\right]_{x} &&\mathop{:=}{} \frac{1}{2} h^3 \sum^{L}_{i=1}\sum^{M}_{j=1}\sum^{N}_{m=1}(f_{i+\frac{1}{2},j,k}g_{i+\frac{1}{2},j,k}+f_{i-\frac{1}{2},j,k}g_{i-\frac{1}{2},j,k}) ,
		&& \quad \forall \ f, g \in \mathcal{E}^{x}_{\Omega}.
	\end{alignat}
	$\left[ \cdot , \cdot\right]_{y}$ and $\left[ \cdot , \cdot\right]_{z}$ can be formulated analogously. For  $\f = (f^x, f^y, f^z)^T, \g = (g^x, g^y, g^z)^T\in\vec{\mathcal{E}}_{\Omega}$ we define the natural inner product
	\begin{equation}
		\left(\f, \g\right)  := \left[ f^x ,  g^x \right]_x +  \left[ f^y ,  g^y \right]_y +  \left[ f^y ,  g^y \right]_z, 
	\end{equation}
	which gives the associated norm $\nrm{\f}_2 = \sqrt{(\f,\f)}$. Analogously, for $\phi, \psi \in \mathcal{C}_{\Omega}$, a natural discrete inner product of their gradients is given by 
	\begin{equation}
		\left(  \nabla_h \phi ,  \nabla_h \psi \right) := \left[ D_x \phi ,  D_x \psi \right]_x +  \left[ D_y \phi ,  D_y \psi \right]_y +  \left[ D_z \phi ,  D_z \psi \right]_z .
		\label{inner product-def 2}
	\end{equation}
	We also introduce the following norms for cell-centered functions $\phi \in
	\mathcal{C}_{\Omega}$:
	\begin{alignat}{4}
		&\nrm{\phi}_{\infty} &&\mathop{:=}{}\max_{i,j,k}|\phi_{i,j,k}|, \label{discrete norm-infty}
		\\
		&\nrm{\phi}_{p} && := \left(|\phi|^p, 1\right)^{\frac{1}{p}} , \qquad 1\leq p < \infty. 
		\label{discrete norm-lp}		
	\end{alignat}
	In addition, we define
	\begin{equation}
		\nrm{\nabla_h \phi}_p := \left( \left[\left|D_x \phi\right|^p, 1 \right]_x +   \left[ \left|D_y \phi\right|^p , 1 \right]_y +  \left[ \left|D_z \phi\right|^p ,1 \right]_z  \right)^{\frac1p} .
	\end{equation}
	In the case of $p=2$, it is clear that $\left( \nabla_h \phi , \nabla_h \phi \right)  = \| \nabla_h \phi \|_2^2$.
	
		In addition, we introduce the discrete $H_h^1$ and $H_h^3$ norms, which are needed in the stability and convergence analysis: 
\begin{eqnarray} 
  &&
  \| \phi \|_{H_h^1}^2 = \| \phi \|_2^2 + \| \nabla_h \phi \|_2^2 ,  \label{discrete norm-H1} 
  \\
&& 
  \| \phi \|_{H_h^3}^2 = \| \phi \|_2^2 + \| \nabla_h \phi \|_2^2 + \| \Delta_h \phi \|_2^2 
  + \| \nabla_h \Delta_h \phi \|_2^2 , \label{discrete norm-H3} 
\end{eqnarray}    
for any $\phi \in {\mathcal C}_\Omega$.

	\subsection{Summation by parts formulas}
	
	For $\phi, \psi \in \mathcal{C}_{\Omega}$ and a velocity vector field $\u
	\in \vec{\mathcal{E}}_{\Omega}$, the following summation by parts formulas can be derived. If $\psi$ satisfies the homogeneous Neumann boundary conditions, we have
	\begin{equation}
		\left( \phi , \Delta_h \psi \right) = - \left( \nabla_h \phi , \nabla_h \psi \right)
		\label{summation-1}
	\end{equation}
	If $\u\cdot\n = 0$ on the boundary, we get
	\begin{equation}
		\left( \phi , \nabla_h \cdot \u \right) = - \left( \nabla_h \phi , \u \right) .
		\label{summation-2}
	\end{equation}	
\bibliographystyle{plain}
\bibliography{CHHSFDM2nd.bib}		

\begin{thebibliography}{10}

\bibitem{baskaran2016energy}
A.~Baskaran, Z.~Guan, and J.~S. Lowengrub.
\newblock Energy stable multigrid method for local and non-local hydrodynamic
  models for freezing.
\newblock {\em Comput. Methods in Appl. Mech. Eng.}, 299:22--56, 2016.

\bibitem{cahn1958free}
J.W. Cahn and J.~E. Hilliard.
\newblock Free energy of a nonuniform system. {I. I}nterfacial free energy.
\newblock {\em ‎J. Chem. Phys.}, 28(2):258--267, 1958.

\bibitem{chen2015efficient}
W.~Chen, Y.~Liu, C.~Wang, and S.M. Wise.
\newblock Convergence analysis of a fully discrete finite difference scheme for
  {Cahn-Hilliard-Hele-Shaw} equation.
\newblock {\em Math. Comput.}, 85:2231--2257, 2016.

\bibitem{collins2013efficient}
C.~Collins, J.~Shen, and S.~M. Wise.
\newblock An efficient, energy stable scheme for the {Cahn-Hilliard-Brinkman}
  system.
\newblock {\em Commun. Comput. Phys.}, 13:929--957, 2013.

\bibitem{diegel15a}
A.~Diegel, X.~Feng, and S.M. Wise.
\newblock Convergence analysis of an unconditionally stable method for a
  {Cahn-Hilliard-Stokes} system of equations.
\newblock {\em SIAM J. Numer. Anal.}, 53:127--152, 2015.

\bibitem{diegel16a}
A.~Diegel, C.~Wang, X.~Wang, and S.M. Wise.
\newblock Convergence analysis and error estimates for a second order accurate
  finite element method for the {Cahn-Hilliard-Navier-Stokes} system.
\newblock {\em Numer. Math.}, 2016.
\newblock Submitted and in review.

\bibitem{diegel16}
A.~Diegel, C.~Wang, and S.M. Wise.
\newblock Stability and convergence of a second order mixed finite element
  method for the {Cahn-Hilliard} equation.
\newblock {\em IMA J. Numer. Anal.}, 36:1867--1897, 2016.

\bibitem{feng2016bsam}
W.~Feng, Z.~Guo, J.~Lowengrub, and S.M. Wise.
\newblock Mass-conservative cell-centered finite difference methods and an
  efficient multigrid solver for the diffusion equation on block-structured,
  locally cartesian adaptive grids.
\newblock {\em In preparation}, 2016.

\bibitem{feng2016preconditioned}
W.~Feng, A.J. Salgado, C.~Wang, and S.M. Wise.
\newblock Preconditioned steepest descent methods for some nonlinear elliptic
  equations involving p-{Laplacian} terms.
\newblock {\em arXiv preprint arXiv:1607.01475}, 2016.

\bibitem{feng2012analysis}
X.~Feng and S.~M. Wise.
\newblock Analysis of a {Darcy-Cahn-Hilliard} diffuse interface model for the
  {Hele-Shaw} flow and its fully discrete finite element approximation.
\newblock {\em SIAM J. Numer. Anal.}, 50(3):1320--1343, 2012.

\bibitem{frieboes2010three}
H.B. Frieboes, F.~Jin, Y.~Chuang, S.M. Wise, J.S. Lowengrub, and V.~Cristini.
\newblock Three-dimensional multispecies nonlinear tumor growth---{II}: tumor
  invasion and angiogenesis.
\newblock {\em J. Theor. Biol.}, 264(4):1254--1278, 2010.

\bibitem{guo16}
J.~Guo, C.~Wang, S.M. Wise, and X.~Yue.
\newblock An {$H^2$} convergence of a second-order convex-splitting, finite
  difference scheme for the three-dimensional {Cahn-Hilliard} equation.
\newblock {\em Commu. Math. Sci.}, 14:489--515, 2016.

\bibitem{guo2014efficient}
R.~Guo, Y.~Xia, and Y.~Xu.
\newblock An efficient fully-discrete local discontinuous {Galerkin} method for
  the {Cahn-Hilliard-Hele-Shaw} system.
\newblock {\em J. Comput. Phys.}, 264:23--40, 2014.

\bibitem{han2014decoupled}
D.~Han.
\newblock A decoupled unconditionally stable numerical scheme for the
  {Cahn-Hilliard-Hele-Shaw} system.
\newblock {\em arXiv preprint arXiv:1410.8846}, 2014.

\bibitem{harlow1965numerical}
F.~H. Harlow and J.E. Welch.
\newblock Numerical calculation of time-dependent viscous incompressible flow
  of fluid with free surface.
\newblock {\em Phys. Fluids}, 8(12):2182--2189, 1965.

\bibitem{hu2009stable}
Z.~Hu, S.M. Wise, C.~Wang, and J.S. Lowengrub.
\newblock Stable and efficient finite-difference nonlinear-multigrid schemes
  for the phase-field crystal equation.
\newblock {\em J. Comput. Phys.}, 228:5323--5339, 2009.

\bibitem{lee2002modeling2}
H.~Lee, J.~S. Lowengrub, and J.~Goodman.
\newblock Modeling pinchoff and reconnection in a {Hele-Shaw} cell. {II.
  Analysis} and simulation in the nonlinear regime.
\newblock {\em Phys. Fluids}, 14(2):514--545, 2002.

\bibitem{lee2002modeling1}
H.~Lee, J.S. Lowengrub, and J.~Goodman.
\newblock Modeling pinchoff and reconnection in a {Hele-Shaw} cell. {I. The}
  models and their calibration.
\newblock {\em Phys. Fluids}, 14(2):492--513, 2002.

\bibitem{Liu16a}
Y.~Liu, W.~Chen, C.~Wang, and S.M. Wise.
\newblock Error analysis of a mixed finite element method for a
  {Cahn-Hilliard-Hele-Shaw} system.
\newblock {\em Numer. Math.}, 2016.
\newblock Accepted and published online.

\bibitem{samelson2003surface}
R.~Samelson, R.~Temam, C.~Wang, and S.~Wang.
\newblock Surface pressure {Poisson} equation formulation of the primitive
  equations: Numerical schemes.
\newblock {\em SIAM J. Numer. Anal.}, 41(3):1163--1194, 2003.

\bibitem{shen2012second}
J.~Shen, C.~Wang, X.~Wang, and S.~M. Wise.
\newblock Second-order convex splitting schemes for gradient flows with
  {Ehrlich-Schwoebel} type energy: application to thin film epitaxy.
\newblock {\em SIAM J. Numer. Anal.}, 50(1):105--125, 2012.

\bibitem{wang2010unconditionally}
C.~Wang, X.~Wang, and S.~M. Wise.
\newblock Unconditionally stable schemes for equations of thin film epitaxy.
\newblock {\em Discrete Contin. Dyn. Syst. A}, 28(1):405--423, 2010.

\bibitem{wise2010unconditionally}
S.~M. Wise.
\newblock Unconditionally stable finite difference, nonlinear multigrid
  simulation of the {Cahn-Hilliard-Hele-Shaw} system of equations.
\newblock {\em J. Sci. Comput.}, 44(1):38--68, 2010.

\bibitem{wise2008three}
S.M. Wise, J.S. Lowengrub, H.B.~Frieboes B, and V.~Cristini.
\newblock Three-dimensional multispecies nonlinear tumor growth---{I}: model
  and numerical method.
\newblock {\em J. Theor. Biol.}, 253(3):524--543, 2008.

\bibitem{wise2009energy}
S.M. Wise, C.~Wang, and J.S. Lowengrub.
\newblock An energy-stable and convergent finite-difference scheme for the
  phase field crystal equation.
\newblock {\em SIAM J. Numer. Anal.}, 47(3):2269--2288, 2009.

\end{thebibliography}
\end{document}